\newtheorem{theorem}{Theorem}[section]
\newtheorem{lemma}[theorem]{Lemma}
\newtheorem{proposition}[theorem]{Proposition}
\newtheorem{corollary}[theorem]{Corollary}
\newtheorem*{theorem*}{Theorem}
\theoremstyle{remark}
\newtheorem{remark}[theorem]{Remark}
\newtheorem{definition}[theorem]{Definition}
\newtheorem{example}[theorem]{Example}
\numberwithin{equation}{section}
\newcommand{\Z}{\mathbb{Z}}
\newcommand{\N}{\mathbb{N}}
\newcommand{\R}{\mathbb{R}}
\newcommand{\C}{\mathbb{C}}
\newcommand{\Q}{\mathbb{Q}}
\newcommand{\im}{\operatorname{im }}
\newcommand{\coker}{\operatorname{coker }}
\newcommand{\aut}{\operatorname{Aut}}
\newcommand{\rank}{\operatorname{rank}}
\newcommand{\reg}{\textnormal{reg}}
\newcommand{\sgn}{\operatorname{sgn}}
\newcommand{\gae}{\lower 2pt \hbox{$\, \buildrel {\scriptstyle >}\over {\scriptstyle
\sim}\,$}}
\newcommand{\lae}{\lower 2pt \hbox{$\, \buildrel {\scriptstyle <}\over {\scriptstyle
\sim}\,$}}
\newcommand{\MU}[1]{
\setbox0\hbox{$#1$}
\setbox1\hbox{$W$}
\ifdim\wd0>\wd1 #1^{\sim} \else \widetilde{#1} \fi
}
\begin{document}
\title[Classification of unital simple Leavitt path algebras]{Classification of unital simple Leavitt path algebras of infinite graphs}

	\author{Efren Ruiz}
        \address{Department of Mathematics\\University of Hawaii,
Hilo\\200 W. Kawili St.\\
Hilo, Hawaii\\
96720-4091 USA}
        \email{ruize@hawaii.edu}
        
         \author{Mark Tomforde}
        \address{Department of Mathematics\\University of Houston\\
Houston, Texas\\
77204- 3008, USA}
        \email{tomforde@math.uh.edu}
        
\date{\today}

	\keywords{Graph algebras, graph $C^*$-algebras, Leavitt path algebras, Morita equivalence, classification, flow equivalence}
	\subjclass[2010]{Primary: 16D70, 37B10  Secondary: 46L35}

\thanks{The second author was supported by a grant from the Simons Foundation (\#210035 to Mark Tomforde)}

\date{\today}

\begin{abstract}
We prove that if $E$ and $F$ are graphs with a finite number of vertices and an infinite number of edges, if $K$ is a field, and if $L_K(E)$ and $L_K(F)$ are simple Leavitt path algebras, then $L_K(E)$ is Morita equivalent to  $L_K(F)$ if and only if $K_0^\textnormal{alg} (L_K(E)) \cong K_0^\textnormal{alg} (L_K(F))$ and the graphs $E$ and $F$ have the same number of singular vertices, and moreover, in this case one may transform the graph $E$ into the graph $F$ using basic moves that preserve the Morita equivalence class of the associated Leavitt path algebra.  We also show that when $K$ is a field with no free quotients, the condition that $E$ and $F$ have the same number of singular vertices may be replaced by $K_1^\textnormal{alg} (L_K(E)) \cong K_1^\textnormal{alg} (L_K(F))$, and we produce examples showing this cannot be done in general.  We describe how we can combine our results with a classification result of Abrams, Louly, Pardo, and Smith to get a nearly complete classification of unital simple Leavitt path algebras --- the only missing part is determining whether the ``sign of the determinant condition" is necessary in the finite graph case.  We also consider the Cuntz splice move on a graph and its effect on the associated Leavitt path algebra.  \end{abstract}

\maketitle

\section{Introduction}

A current program in the theory of Leavitt path algebras is to classify the unital simple Leavitt path algebras up to Morita equivalence in terms of an invariant that can be easily calculated from the graph.  Much of the inspiration for this program comes from the theory of $C^*$-algebras, where it has been found that various $C^*$-algebras are classified up to Morita equivalence by $K$-theory.  Since Leavitt path algebras are a class of algebras that are very similar to $C^*$-algebras (particularly the graph $C^*$-algebras), it is natural to ask whether the Leavitt path algebras may be classified by $K$-theoretic invariants and whether this classification may be obtained by techniques similar to those used to classify $C^*$-algebras.  While topological $K$-theory is often a useful invariant for $C^*$-algebras, since it can be computed for many $C^*$-algebras of interest, algebraic $K$-theory is notoriously difficult to compute --- indeed, it is still not known what all the algebraic $K$-groups of the ring $\Z$ are.  Nonetheless, algebraic $K$-theory for Leavitt path algebras is much more tractable.  Indeed, due to Ara, Brustenga, and Corti\~nas \cite{ABC}, there are explicit formulae for determining $K_0^\textnormal{alg} (L_K(E))$ and $K_1^\textnormal{alg} (L_K(E))$ for any graph $E$ and any field $K$, and there is a long exact sequence relating the higher algebraic $K$-groups of $L_K(E)$ and the algebraic $K$-groups of the field $K$.  Consequently, it is often the case that much of the algebraic $K$-theory of $L_K(E)$ can be computed from the graph $E$ and the field $K$.

Any unital simple Leavitt path algebra is either purely infinite or isomorphic to a matrix algebra, so the nontrivial part of the classification program mentioned above involves the classification of unital, purely infinite, simple Leavitt path algebras.  In the theory of $C^*$-algebras, the main result for the classification of purely infinite, simple $C^*$-algebras is the Kirchberg-Phillips classification theorem, which states that for a wide class of purely infinite, simple $C^*$-algebras the pair consisting of the $K_0^\textnormal{top}$-group and the $K_1^\textnormal{top}$-group is a complete Morita equivalence invariant.  In particular, the Kirchberg-Phillips classification theorem applies to all graph $C^*$-algebras that are simple and purely infinite, and this begs the question of whether a similar result is true for Leavitt path algebras.  Unfortunately, there is little hope for the techniques used to prove the Kirchberg-Phillips classification theorem to work for Leavitt path algebras, because the technology involved relies heavily on the $C^*$-algebra structure.  On the other hand, there are many preliminary classification results, which are now subsumed by the Kirchberg-Phillips classification theorem, that were originally proven by vastly different techniques and seem more applicable to Leavitt path algebras.  One such example is the classification of the simple Cuntz-Krieger algebras.  The Cuntz-Krieger algebras coincide with the $C^*$-algebras of finite graphs with no sinks or sources, and hence are $C^*$-algebraic analogues of Leavitt path algebras of finite graphs with no sinks or sources.  The simple Cuntz-Krieger algebras were classified by work of R\o rdam \cite{Ro}, Cuntz \cite[Appendix~7]{Ro}, and Cuntz and Krieger \cite{CK}, and it was shown that the the $K^\textnormal{top}_0$-group is a complete Morita equivalence invariant.  (It turns out that for Cuntz-Krieger algebras, the $K^\textnormal{top}_1$-group is redundant and can be recovered from the $K_0^\textnormal{top}$-group.)  This classification relied on connections with symbolic dynamics, and the observation that Morita equivalence of two simple Cuntz-Krieger algebras is related to flow equivalence of the associated shift spaces of the graphs.  It was shown that graphs with flow equivalent shift spaces produce Morita equivalent $C^*$-algebras, and in addition (due to the key work of R\o rdam and Cuntz in \cite{Ro}) it is possible for certain graphs with shift spaces that are not flow equivalent to have Morita equivalent $C^*$-algebras.  Moreover, the proofs showed that for Cuntz-Krieger algebras, the Morita equivalence can be realized concretely in the sense that there is a set of basic moves that will turn one graph into another while preserving the Morita equivalence of the associated $C^*$-algebra.  These moves include familiar moves from symbolic dynamics that preserve flow equivalence of the shift space, as well as an additional move, called the ``Cuntz splice", that does not preserve flow equivalence of the shift space.

Abrams, Louly, Pardo, and Smith showed that for Leavitt path algebras of finite graphs with no sinks, similar connections with symbolic dynamics exist and a classification up to Morita equivalence can be obtained \cite[Theorem~1.25]{ALPS}.  However, unlike in the $C^*$-algebra case, they were unable to determine if the ``Cuntz splice" move preserves Morita equivalence of the associated Leavitt path algebra.  To obtain their result, they therefore needed to impose the additional condition that $\sgn (\det (I-A^t_E)) = \sgn (\det (I-A^t_F))$, where $A_E$ and $A_F$ are the vertex matrices of $E$ and $F$, respectively.  Thus their result states that if $E$ and $F$ are finite graphs with no sinks, and if $L_K(E)$ and $L_K(F)$ are simple algebras, then $K_0^\textnormal{alg} (L_K(E)) \cong K^\textnormal{alg}_0(L_K(E) )$ and $\sgn (\det (I-A^t_E)) = \sgn (\det (I-A^t_F))$ implies $L_K(E)$ is Morita equivalent to $L_K(F)$,  and moreover, in this case $E$ may be transformed into $F$ by a sequence of basic moves.  It is currently unknown --- and the focus of much investigation --- whether the ``sign of the determinant condition" is necessary.  It is also interesting to note that the $K^\textnormal{alg}_0$-group is the only $K$-theoretic data needed, since \emph{a priori} it is not clear in this situation that the higher $K$-groups $K_n^\textnormal{alg}(L_K(E))$ for $n \geq 1$ are determined by the group $K_0^\textnormal{alg}(L_K(E))$.

Recently, S\o rensen turned his attention toward producing additional classifications of graph $C^*$-algebras using techniques from dynamics and moves on the graphs.  S\o rensen showed in \cite{Sor} that if $E$ and $F$ are graphs with a finite number of vertices and an infinite number of edges, and if $C^*(E)$ and $C^*(F)$ are simple $C^*$-algebras, then the graph $C^*$-algebras $C^*(E)$ and $C^*(F)$ are Morita equivalent if and only if $K_0^\textnormal{top} (C^*(E)) \cong K_0^\textnormal{top} (C^*(F))$ and $K_1^\textnormal{top} (C^*(E)) \cong K_1^\textnormal{top} (C^*(F))$, and moreover, in this case one may transform the graph $E$ into the graph $F$ using basic moves that do not require the Cuntz splice.  (We mention that the $K_1^\textnormal{top}$-group is a necessary part of the invariant in this case and it cannot be recovered from the $K_0^\textnormal{top}$-group.)

The inspiration for this paper comes from S\o rensen's result --- particularly the fact that the Cuntz splice is not needed in his classification.  Building off S\o rensen's result, we are able to show in Theorem~\ref{field-does-not-matter-thm} that if $E$ and $F$ are graphs with a finite number of vertices and an infinite number of edges, and if $L_K(E)$ and $L_K(F)$ are simple Leavitt path algebras, then $L_K(E)$ and $L_K(F)$ are Morita equivalent if and only if $K_0^\textnormal{alg} (L_K(E)) \cong K_0^\textnormal{alg} (L_K(F))$ and the graphs $E$ and $F$ have the same number of singular vertices; and moreover, in this case $E$ can be transformed into $F$ using the basic moves coming from the study of flow equivalence of shift spaces.  We also show that when $K$ is a field with no free quotients (see Definition~\ref{nfq-group-def} and Definition~\ref{nfq-field-def}), then the condition in this result that the graphs $E$ and $F$ have the same number of singular vertices may replaced by the condition that $K_1^\textnormal{alg} (L_K(E)) \cong K_1^\textnormal{alg} (L_K(F))$, and we produce examples in Section~\ref{example-sec} showing that this replacement cannot be done for Leavitt path algebras over general fields.

Our classification result has a few remarkable properties: First, it is interesting that the underlying field $K$ plays a role in the classification, since the field often does not affect Leavitt path algebras results and, in particular, seems irrelevant in the classification of Leavitt path algebras of finite graphs described in  \cite[Theorem~1.25]{ALPS}.  Second, it is convenient that the complete Morita equivalence invariant consists of the $K_0^\textnormal{alg}$-group and the number of singular vertices (or, in the case the underlying field has no free quotients, the $K_0^\textnormal{alg}$-group and $K_1$-group), since for Leavitt path algebras we have explicit formulae for the $K_0^\textnormal{alg}$-group and the $K_1^\textnormal{alg}$-group, but in general we cannot explicitly compute the higher $K_n^\textnormal{alg}$-groups for $n \geq 2$ (see Proposition~\ref{K-theory-comp-prop}).  Third, it is fortunate that the Cuntz splice is not needed among the basic moves required to transform one graph into another (just as in S\o rensen's result for graph $C^*$-algebras), so that we are able to circumvent the need for any kind of ``sign of the determinant condition" and thereby obtain necessary and sufficient conditions for Morita equivalence in this setting.

While our main purpose in this paper is to establish the classification of simple Leavitt path algebras of graphs with a finite number of vertices and infinite number of edges, we also pursue several other lines of investigation.  One of our main questions is: To what extent do the algebraic $K$-groups of a Leavitt path algebra determine the that Leavitt path algebra's structure?  In particular, we consider what information is contained in the $K_0^\textnormal{alg}$-group and $K_1^\textnormal{alg}$-group of a Leavitt path algebra, and ask when these groups will determine the higher $K_n^\textnormal{alg}$-groups.  We also explore the question of when isomorphic $K^\textnormal{alg}$-groups of two Leavitt path algebra will imply isomorphic $K^\textnormal{top}$-groups of the corresponding graph $C^*$-algebras.  We will see that the condition that the underlying field has no free quotients is important in all of these questions.

This paper is organized as follows.  We begin with some preliminaries in Section~\ref{prelim-sec}.  In Section~\ref{moves-sec} we introduce our basic moves, referred to as Moves~(S), (O), (I), and (R), and prove that they preserve Morita equivalence of the associated Leavitt path algebra.  While special cases of these moves have been considered by other authors (e.g., when performed on finite graphs), here we describe them in full generality, and discuss what is allowed when there are infinite emitters in the graph.  In Section~\ref{simple-graph-sec} we discuss the graphs associated with simple Leavitt path algebras and simple graph $C^*$-algebras, and we refer to these as \emph{simple graphs}.  In Section~\ref{compute-K-sec} we describe how to compute the topological $K$-theory of a graph $C^*$-algebra and the algebraic $K$-theory of a Leavitt path algebra.  In particular, we describe explicit formulae for the $K^\textnormal{top}_0$-group and $K^\textnormal{top}_1$-group of a graph $C^*$-algebra, describe explicit formulae for the $K^\textnormal{alg}_0$-group and $K^\textnormal{alg}_1$-group of a Leavitt path algebra, and describe how the higher algebraic $K$-groups of a Leavitt path algebra fit into a long exact sequence that involves the algebraic $K$-groups of the underlying field.  Additionally, in a number of corollaries we examine the computation of the $K$-groups in the special case that the graph has finitely many vertices.  In Section~\ref{K-theory-comparison-sec} we introduce a property for groups and fields that we call \emph{having no free quotients}.  We examine equivalent ways to formulate this property, and also consider a number of examples.  We prove in Theorem~\ref{LPA-K-implies-C-star-K-thm} that if $E$ and $F$ are graphs and $K$ is a field with no free quotients, then $K_n^\textnormal{alg}(L_K(E)) \cong K_n^\textnormal{alg}(L_K(F))$ for $n = 0,1$ implies $K_n^\textnormal{top}(C^*(E)) \cong K_n^\textnormal{top}(C^*(F))$ for $n = 0,1$.  We also examine the information contained in the algebraic $K$-groups, and show that for a graph with finitely many vertices and a field with no free quotients, knowing the $K_0^\text{alg}$-group and the $K_1^\text{alg}$-group of the Leavitt path algebra is equivalent to knowing the $K_0^\text{alg}$-group of the Leavitt path algebra and the the number of singular vertices in the graph --- but that this equivalence does not hold if the hypothesis that the field has no free quotients is dropped.  In Section~\ref{class-thm-sec} we consider the classification of simple Leavitt path algebras $L_K(E)$ when $E$ is a graph with a finite number of vertices and an infinite number of edges.  We prove in Theorem~\ref{class-unital-inf-LPA-thm} that when $K$ is a field with no free quotients, then $(K_0^\textnormal{alg} (L_K(E)), K_1^\textnormal{alg} (L_K(E)))$ is a complete Morita equivalence invariant for this class.  In Theorem~\ref{field-does-not-matter-thm} we show if $K$ is any field, then $(K_0^\textnormal{alg} (L_K(E)), |E^0_\textnormal{sing}|)$ is a complete Morita equivalence invariant for this class (where $|E^0_\textnormal{sing}|$ denotes the number of singular vertices in $E$), and that when two such Leavitt path algebras $L_K(E)$ and $L_K(F)$ are Morita equivalent, the graph $E$ may be transformed into the graph $F$ using the basic moves (S), (O), (I), (R), and their inverses.  We also show that if $E$ and $F$ are simple graphs with a finite number of vertices and an infinite number of edges and $K$ is any field, then $C^*(E)$ is strongly Morita equivalent to $C^*(F)$ if and only if $L_K(E)$ is Morita equivalent to $L_K(F)$.

In Section~\ref{unital-class-sec} we combine the results of Theorem~\ref{class-unital-inf-LPA-thm} and Theorem~\ref{field-does-not-matter-thm} with the classification result of Abrams, Louly, Pardo, and Smith in \cite[Theorem~1.25]{ALPS} to give the current status of the program to classify unital simple Leavitt path algebras up to Morita equivalence.  We show that the only remaining piece to complete this program is to determine if the ``sign of the determinant condition" is necessary in the finite graph case.  In Section~\ref{CS-sec} we describe the \emph{Cuntz splice} move for Leavitt path algebras.  We describe how determining if the ``sign of the determinant condition" is necessary is equivalent to determining if the Cuntz splice move preserves the Morita equivalence of the associated Leavitt path algebra.  We also show that for any graph $E$ and any field $K$, the Cuntz splice preserves the isomorphism class of the $K_0^\textnormal{alg}$-group and the $K_1^\textnormal{alg}$-group of the associated Leavitt path algebra, and in the finite graph case the Cuntz splice flips the sign of $\det (I-A_E^t)$.  In addition, we discuss the fact that when the Cuntz splice is applied to a simple graph with finitely many vertices and infinitely many edges, it may instead be obtained by a finite sequence of the moves (S), (O), (I), (R), and their inverses.  In Example~\ref{E-infty-CS-ex} we take the graph $E_\infty$ with one vertex and countably many edges, perform the Cuntz splice to obtain a graph $E_\infty^-$, and then show how we may transform $E_\infty$ into $E_\infty^-$ using the moves (O), (I), (R), and their inverses.  In Section~\ref{isomorphism-sec} we consider classification up to isomorphism, and show that if $E$ is a simple graph with a finite number of vertices and an infinite number of edges, $K$ is any field, and if $[1_{L_K(E)}]_0$ is an automorphism invariant element of $K_0^\textnormal{alg}(L_K(E))$, then $((K_0^\textnormal{alg}(L_K(E)), [1_{L_K(E)}]_0), |E^0_\textnormal{sing}|)$ is a complete isomorphism invariant.  In Section~\ref{example-sec} we consider some interesting (counter)examples.  In particular, we observe that the field $\Q$ does not satisfy the hypothesis of having no free quotients, and we give an example of simple graphs $E$ and $F$ with a finite number of vertices and infinite number of edges such $K_0^\text{alg} (L_\Q(E)) \cong K_0^\text{alg} (L_\Q(F))$ and $K_1^\text{alg} (L_\Q(E)) \cong K_1^\text{alg} (L_\Q(F))$, but $L_\Q(E)$ and $L_\Q(F)$ are not Morita equivalent.  We also see that our graphs have different numbers of singular vertices despite the fact their $K_0^\text{alg}$-groups and $K_1^\text{alg}$-groups are isomorphic.  We also produce a similar example showing that over the field $\Q$ it is possible for a finite graph and a graph with a finite number of vertices and infinite number of edges to have simple Leavitt path algebras that are not Morita equivalent but have isomorphic $K_0^\text{alg}$-groups and $K_1^\text{alg}$-groups.  These examples show that the invariant $(K_0^\textnormal{alg} (L_K(E)), |E^0_\textnormal{sing}|)$ must be used instead of $(K_0^\textnormal{alg} (L_K(E)), K_1^\textnormal{alg} (L_K(E)))$ when the underlying field has free quotients.  We conclude the paper by examining our examples in greater detail, discussing their consequences, and raising a number of questions for further study.

\smallskip

\noindent \textsc{Terminology:} Throughout this paper the term \emph{countable} shall mean finite or countably infinite.  We will make the standing assumption that all of our graphs are countable.  If $G$  is a finitely generated abelian group, the \emph{rank} of $G$ shall mean the torsion-free rank; i.e., if $G = \Z_{n_1}\oplus \cdots \oplus \Z_{n_k} \oplus \Z^m$, then $\rank G = m$.  We refer to Morita equivalence in the category of $C^*$-algebras as \emph{strong Morita equivalence} to distinguish it from Morita equivalence of rings.  We will also make use of both algebraic and topological $K$-theory throughout this paper.  To distinguish the two, we use the notation $K_n^\textnormal{top}$ for the topological $K$-groups of a $C^*$-algebra and the notation $K_n^\textnormal{alg}$ for the algebraic $K$-groups of a ring.  

\smallskip

\noindent \textsc{Acknowledgements:} We thank Adam S\o rensen for many useful discussions related to this work.  In particular, we thank him for describing the sequence of moves we reproduce in Example~\ref{E-infty-CS-ex}, and for helping us to simplify the graphs we use in Example~\ref{K-theory-counter-ex}.   We thank Roozbeh Hazrat for pointing out some miscalculations in a previous version of this paper.  We also thank Gene Abrams for useful discussions and for helping us to strengthen the result of Proposition~\ref{unital-classification-zero-class-prop}.  We also extend a profound thank you to Enrique Pardo, who after looking at a previous version of this paper pointed out Lemma~\ref{l:fieldextension}, which allowed us to strengthen our previous results and also deduce that $(K_0^\textnormal{alg}(L_K(E)), |E^0_\textnormal{sing}|)$ is a complete Morita equivalence invariant for Leavitt path algebras over general fields.

\section{Preliminaries} \label{prelim-sec}

In this section we establish notation and recall some standard definitions.  

\begin{definition}
A \emph{graph} $(E^0, E^1, r, s)$ consists of a countable set $E^0$ of vertices, a countable set $E^1$ of edges, and maps $r : E^1 \to E^0$ and $s : E^1 \to E^0$ identifying the range and source of each edge.  \end{definition}

\begin{definition}
Let $E := (E^0, E^1, r, s)$ be a graph. We say that a vertex $v
\in E^0$ is a \emph{sink} if $s^{-1}(v) = \emptyset$, and we say
that a vertex $v \in E^0$ is an \emph{infinite emitter} if
$|s^{-1}(v)| = \infty$.  A \emph{singular vertex} is a vertex that
is either a sink or an infinite emitter, and we denote the set of
singular vertices by $E^0_\textnormal{sing}$.  We also let
$E^0_\textnormal{reg} := E^0 \setminus E^0_\textnormal{sing}$, and
refer to the elements of $E^0_\textnormal{reg}$ as \emph{regular
vertices}; i.e., a vertex $v \in E^0$ is a regular vertex if and
only if $0 < |s^{-1}(v)| < \infty$.  A graph is \emph{row-finite}
if it has no infinite emitters.  A graph is \emph{finite} if both
sets $E^0$ and $E^1$ are finite (or equivalently, when $E^0$ is
finite and $E$ is row-finite).
\end{definition}

\begin{definition}
If $E$ is a graph, a \emph{path} is a sequence $\alpha := e_1 e_2
\ldots e_n$ of edges with $r(e_i) = s(e_{i+1})$ for $1 \leq i \leq
n-1$.  We say the path $\alpha$ has \emph{length} $| \alpha| :=n$,
and we let $E^n$ denote the set of paths of length $n$.  We
consider the vertices in $E^0$ to be paths of length zero.  We
also let $E^* := \bigcup_{n=0}^\infty E^n$ denote the paths of
finite length in $E$, and we extend the maps $r$ and $s$ to $E^*$
as follows: For $\alpha = e_1 e_2 \ldots e_n \in E^n$ with $n\geq
1$, we set $r(\alpha) = r(e_n)$ and $s(\alpha) = s(e_1)$; for
$\alpha = v \in E^0$, we set $r(v) = v = s(v)$.  A \emph{cycle} is a path $\alpha=e_1 e_2 \ldots e_n$ with length $|\alpha| \geq 1$ and $r(\alpha) = s(\alpha)$.  
\end{definition}

\begin{definition} \label{graph-C*-def}
If $E$ is a graph, the \emph{graph $C^*$-algebra} $C^*(E)$ is the universal
$C^*$-algebra generated by mutually orthogonal projections $\{ p_v
: v \in E^0 \}$ and partial isometries with mutually orthogonal
ranges $\{ s_e : e \in E^1 \}$ satisfying
\begin{enumerate}
\item $s_e^* s_e = p_{r(e)}$ \quad  for all $e \in E^1$
\item $s_es_e^* \leq p_{s(e)}$ \quad for all $e \in E^1$
\item $p_v = \sum_{\{ e \in E^1 : s(e) = v \}} s_es_e^* $ \quad for all $v \in E^0_\textnormal{reg}$.
\end{enumerate}
\end{definition}
For a path $\alpha = e_1 \ldots e_n$ of positive length we let $s_\alpha := s_{e_1} \ldots s_{e_n}$, and for a vertex $v$ we define $s_v := p_v$.  One can show that $$C^*(E) = \overline{\operatorname{span}} \{ s_\alpha s_\beta^* : \text{$\alpha$ and $\beta$ are paths in $E$ with $r(\alpha) = r(\beta)$} \}.$$
If $E$ is a graph, the graph $C^*$-algebra $C^*(E)$ is unital if and only if the vertex set $E^0$ is finite, in which case $1 = \sum_{v \in E^0} p_v$.

\begin{definition} \label{LPA-lin-invo-def}
Let $E$ be a graph, and let $K$ be a field. We let $(E^1)^*$
denote the set of formal symbols $\{ e^* : e \in E^1 \}$.  The \emph{Leavitt path
algebra of $E$ with coefficients in $K$}, denoted $L_K(E)$,  is
the free associative $K$-algebra generated by a set $\{v : v \in
E^0 \}$ of pairwise orthogonal idempotents, together with a set
$\{e, e^* : e \in E^1\}$ of elements, modulo the ideal generated
by the following relations:
\begin{enumerate}
\item $s(e)e = er(e) =e$ for all $e \in E^1$
\item $r(e)e^* = e^* s(e) = e^*$ for all $e \in E^1$
\item $e^*f = \delta_{e,f} \, r(e)$ for all $e, f \in E^1$
\item $v = \displaystyle \sum_{\{e \in E^1 : s(e) = v \}} ee^*$ whenever $v \in E^0_\reg$.
\end{enumerate}
\end{definition}

If $\alpha = e_1 \ldots e_n$ is a path of positive length, we define $\alpha^* = e_n^* \ldots e_1^*$.  One can show that $$L_K(E) = \operatorname{span}_K \{ \alpha \beta^* : \text{$\alpha$ and $\beta$ are paths in $E$ with $r(\alpha) = r(\beta)$} \}.$$  In addition, $L_K(E)$ has a natural $\Z$-grading, with the elements of degree $n$ defined by $$L_K(E)_n =  \operatorname{span}_K \{ \alpha \beta^* : |\alpha| - |\beta| = n \}.$$
If $E$ is a graph and $K$ is a field, the Leavitt path algebra $L_K(E)$ is unital if and only if the vertex set $E^0$ is finite, in which case $1 = \sum_{v \in E^0} v$.

\begin{definition}
If $E$ is a graph, a subset $H \subseteq E^0$ is \emph{hereditary} if whenever $e \in E^1$ and $s(e) \in H$, then $r(e) \in H$.  A hereditary subset $H$ is called \emph{saturated} if $\{ v \in E^0_\reg : r(s^{-1}(v)) \subseteq H \} \subseteq H$. 
\end{definition}

\begin{definition}
Let $E$ be a graph and let $V \subseteq E^{0}$.  We define the \emph{saturation of $V$} to be the set
$$\Sigma (V) := \bigcup_{n=0}^\infty \Sigma_n (V),$$
where $$\Sigma_0 (V) := \{ v \in E^0 : \text{there exists $\alpha \in E^0$ with $s(\alpha) \in V$ and $r(\alpha) = v$} \}$$
and $$\Sigma_{n+1} (V) := \Sigma_n(V) \cup \{ v \in E^0_\textnormal{reg} : r(s^{-1}(v)) \subseteq \Sigma_n(V) \} \qquad \text{for $n=0,1,2,\ldots$} $$  It is straightforward to verify that for any subset $V \subseteq E^0$, the saturation $\Sigma (V)$ is a saturated hereditary set, and moreover, $\Sigma(V)$ is the smallest saturated hereditary subset containing $V$.
\end{definition}

\begin{definition}
Let $E$ be a graph, and let $V \subseteq E^0$.  For any field $K$, we call the subalgebra $VL_K(E)V := \sum_{v,w \in V} v L_K(E) w$ the \emph{corner of $L_K(E)$ determined by $V$}.  We say the corner $VL_K(E)V$ is \emph{full} if $\Sigma (V) = E^0$.
\end{definition}

\begin{lemma}\label{corner-ME-lem}
Let $E$ be a graph, let $V \subseteq E$, and let $K$ be a field. If the corner $VL_K(E)V := \sum_{v,w \in V} v L_K(E) w$ is full, then $VL_K(E)V$ is Morita equivalent to $L_K(E)$.
\end{lemma}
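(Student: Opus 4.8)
The plan is to realize $A := VL_K(E)V$ and $R := L_K(E)$ as the two corner rings of a \emph{surjective Morita context} and then appeal to the Morita theory for rings with local units. First I would record that $L_K(E)$ is a ring with local units: the finite sums of distinct vertices form a set of commuting idempotents $u$ such that every element $x \in L_K(E)$ satisfies $ux = xu = x$ for a suitable $u$ (immediate from the spanning set $\{\alpha\beta^*\}$ together with relations (1)--(2) of Definition~\ref{LPA-lin-invo-def}). The same holds for $A$, whose local units come from finite sums of distinct vertices in $V$. In particular $L_K(E)\cdot L_K(E) = L_K(E)$ and $A\cdot A = A$, so both rings are idempotent. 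Because $V$ may be infinite and $R$ need not be unital, I would avoid writing $A = pRp$ for a single idempotent $p$ and work with bimodules throughout.

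Next I would build the context. Set $M := VL_K(E) = \sum_{v\in V} vL_K(E)$, an $(A,R)$-bimodule, and $N := L_K(E)V = \sum_{v\in V} L_K(E)v$, an $(R,A)$-bimodule, with connecting maps $\mu : M \otimes_R N \to A$ and $\tau : N \otimes_A M \to R$ both given by multiplication inside $L_K(E)$. Since $M$, $N$, $A$, and $R$ all sit inside the associative algebra $L_K(E)$, the two compatibility axioms of a Morita context, $\mu(m\otimes n)m' = m\,\tau(n\otimes m')$ and $\tau(n\otimes m)n' = n\,\mu(m\otimes n')$, hold automatically because both sides equal the relevant triple product in $L_K(E)$. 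Surjectivity of $\mu$ is easy: its image is $VL_K(E)\cdot L_K(E)V = V\bigl(L_K(E)L_K(E)\bigr)V = VL_K(E)V = A$, using idempotency of $L_K(E)$.

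The crux is surjectivity of $\tau$, whose image is the two-sided ideal $I := L_K(E)\,V\,L_K(E)$ generated by $V$; I must show that fullness forces $I = L_K(E)$. As each spanning element satisfies $\alpha\beta^* = \alpha\, r(\alpha)\,\beta^*$, it suffices to prove $E^0 \subseteq I$. For this I would set $H := \{w \in E^0 : w \in I\}$ and verify it is saturated and hereditary. It contains $V$ trivially. It is hereditary: if $w \in H$ and $s(e) = w$, then $r(e) = e^* w e \in I$ by relations (1) and (3) and the ideal property. It is saturated: if $w \in E^0_{\reg}$ with $r(s^{-1}(w)) \subseteq H$, then each $ee^* = e\,r(e)\,e^*$ lies in $I$, so $w = \sum_{s(e)=w} ee^* \in I$ by relation (4), giving $w \in H$. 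Hence $H$ is a saturated hereditary set containing $V$, so $\Sigma(V) \subseteq H$; fullness ($\Sigma(V) = E^0$) then yields $E^0 \subseteq I$ and $I = L_K(E)$, so $\tau$ is onto.

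Finally, having produced a Morita context between the rings-with-local-units $A$ and $R$ in which both connecting maps are surjective, I would conclude by the standard theorem that a surjective Morita context between idempotent rings (in particular, rings with local units) induces an equivalence between their categories of nondegenerate (unital) modules, so $A$ and $R$ are Morita equivalent. The main obstacle is the surjectivity of $\tau$; once the saturation argument above is in place the remainder is formal. The one point demanding care is the non-unital setting: throughout I must use the local-units formulation of Morita equivalence rather than any statement phrased for unital rings.
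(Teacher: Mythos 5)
Your proof is correct and follows essentially the same route as the paper's: both exhibit $VL_K(E)V$ and $L_K(E)$ as the two corner rings of a surjective Morita context, with fullness used to show that the connecting map landing in $L_K(E)$ is onto. The only differences are cosmetic --- the paper splits into the cases $V$ finite and $V$ infinite (using the idempotent $w=\sum_{v\in V}v$, respectively the family $w_F$ for finite $F\subseteq V$) and simply quotes the identity $L_K(E)wL_K(E)=I_{\Sigma(V)}$, whereas you avoid the case split by working with the bimodules $VL_K(E)$ and $L_K(E)V$ and prove the ideal computation directly via the saturated hereditary set $\{w\in E^0 : w\in L_K(E)\,V\,L_K(E)\}$.
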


\begin{proof}
If $V$ is finite, let $w := \sum_{v \in V} v$.  Then $L_K(E) w L_K(E) = I_{\Sigma(V)} = I_{E^0} = L_K(E)$.  A computation shows that $$\left( wL_K(E)w , L_K(E) w L_K(E), L_K(E)w, w L_K(E) \right)$$ is a (surjective) Morita context, so that $VL_K(E)V = wL_K(E)w$ is Morita equivalent to $L_K(E) = L_K(E) w L_K(E)$.

If $V$ is infinite, let $\mathcal{F} := \{ F : F \subseteq V \text{ and $F$ is finite} \}$.  For each $F \in \mathcal{F}$ define $w_F := \sum_{v \in F} v$.  Then $\sum_{F \in \mathcal{F}} L_K(E) w_F L_K(E) = I_{\Sigma(V)} = I_{E^0} = L_K(E)$.  A computation shows that 
$$\left( \sum_{F \in \mathcal{F} } w_F L_{K} ( E ) w_F , \sum_{ F \in \mathcal{F} } L_{K} ( E ) w_F L_{K} (E), \sum_{ F \in \mathcal{F} } L_{K} ( E ) w_F , \sum_{ F \in \mathcal{F} } w_F L_{K} ( E ) \right)$$
is a (surjective) Morita context, so that $VL_K(E)V = \sum_{ F \in \mathcal{F}} w_F L_{K} ( E ) w_F$ is Morita equivalent to $L_{K} (E) = \sum_{ F \in \mathcal{F}} L_{K} ( E ) w_F L_{K} (E)$.

\end{proof}

\section{Moves on Graphs} \label{moves-sec}

In this section we describe the moves on graphs used in \cite{Sor} and examine the effect of each on the Leavitt path algebra.  In particular, we prove that each move preserves the Morita equivalence class of the associated Leavitt path algebra.  We mention that these moves have been considered by other authors, and were previously noted to preserve the Morita equivalence class of either the associated Leavitt path algebra or the associated graph $C^*$-algebra.  We present self-contained proofs here to give a unified treatment and present the proofs for Leavitt path algebras.  For reference, we point out that Proposition~\ref{Move-S-ME-prop} is similar to \cite[Proposition~1.4]{ALPS}, Proposition~\ref{Move-O-ME-prop} is an analogue of \cite[Theorem~3.2]{BP}, Proposition~\ref{Move-I-ME-prop} is an analogue of \cite[Theorem~5.3]{BP}, and Proposition~\ref{Move-R-ME-prop} appears in \cite[\S3]{Sor}.

\begin{definition}[Move (S): Remove a Regular Source] \label{Move-S-def}
Let $E = (E^0, E^1, r, s)$ be a graph, and let $w \in E^0$ be a source that is also a regular vertex.  The graph $E_S = (E_S^0, E_S^1, r_S, s_S)$ is defined by 
$$E_S^0 := E^0 \setminus \{ w \} \qquad E_S^1 := E^1 \setminus s^{-1}(w) \qquad r_S := r|_{E^0_S} \qquad s_S := s|_{E^0_S}.$$  We call $E_S$ the \emph{graph obtained by removing the source $w$ from $E$}, and say $E_S$ is formed by \emph{performing Move (S)} to $E$.
\end{definition}

\begin{proposition}[Cf.~Proposition~1.4 of \cite{ALPS}] \label{Move-S-ME-prop}
Suppose $E$ is a graph and $K$ is a field.  Let $w \in E^0$ be a source that is also a regular vertex, and set $V := E^0 \setminus \{ w \}$.  Then $V L_K(E) V$ is a full corner of $L_K(E)$ that is isomorphic to $L_K(E_S)$ via a graded isomorphism.  In particular, $L_K(E_S)$ is Morita equivalent to $L_K(E)$.
\end{proposition}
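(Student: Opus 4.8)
The plan is to establish fullness of the corner directly from the saturation, to construct an explicit graded isomorphism $\phi\colon L_K(E_S)\to VL_K(E)V$ via the universal property of $L_K(E_S)$, and then to invoke Lemma~\ref{corner-ME-lem} for the concluding Morita equivalence. For fullness, note first that because $w$ is a source it receives no edges, so every $e\in s^{-1}(w)$ satisfies $r(e)\in V$, giving $r(s^{-1}(w))\subseteq V\subseteq \Sigma_0(V)$; since $w$ is moreover regular, the saturation step places $w\in\Sigma_1(V)$, and hence $\Sigma(V)=E^0$. Thus $VL_K(E)V$ is a full corner, and Lemma~\ref{corner-ME-lem} immediately gives that it is Morita equivalent to $L_K(E)$. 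All that then remains is to identify this corner with $L_K(E_S)$.

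The structural fact underpinning the whole argument is that, since $w$ is a source, no edge of $E$ has range $w$; consequently deleting $w$ and the edges $s^{-1}(w)$ leaves the out-edges of every other vertex untouched, i.e.\ $s_S^{-1}(v)=s^{-1}(v)$ for all $v\in V$. In particular $(E_S)^0_{\reg}=E^0_{\reg}\cap V$, and each regular vertex of $E_S$ emits in $E_S$ exactly the edges it emits in $E$. I would then define $\phi$ on generators by $v\mapsto v$ for $v\in E_S^0=V$ and $e\mapsto e$, $e^*\mapsto e^*$ for $e\in E_S^1$, with targets the corresponding elements of $L_K(E)$. Relations (1)--(3) for $E_S$ transfer verbatim, and relation (4) at any $v\in (E_S)^0_{\reg}$ is literally relation (4) at $v$ in $L_K(E)$ by the edge-set equality above; so the universal property of $L_K(E_S)$ yields a homomorphism $\phi$, which is $\Z$-graded since it sends vertices, edges, and the symbols $e^*$ to elements of degrees $0$, $1$, and $-1$.

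It then remains to show $\phi$ is an isomorphism onto the corner. For surjectivity I would use that $VL_K(E)V=\operatorname{span}_K\{\alpha\beta^*: r(\alpha)=r(\beta),\ s(\alpha),s(\beta)\in V\}$; since $w$ is a source, any path whose source is different from $w$ can never reach $w$ and so uses only edges of $E_S$, whence each such $\alpha$ and $\beta$ lies in $E_S^*$ and $\alpha\beta^*=\phi(\alpha\beta^*)$ is in the image. For injectivity the cleanest route is the Graded Uniqueness Theorem for Leavitt path algebras (a standard tool): $\phi$ is a graded homomorphism into the $\Z$-graded ring $L_K(E)$ carrying each vertex idempotent of $E_S$ to the nonzero idempotent $v$, hence it is injective. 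Combining, $\phi$ is a graded isomorphism $L_K(E_S)\cong VL_K(E)V$, which by the fullness established above is Morita equivalent to $L_K(E)$.

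The main obstacle is injectivity; everything else reduces to bookkeeping resting on the single fact that a source $w$ cannot be visited by any path that does not start at it. Injectivity could in principle be handled by hand by producing an inverse on the spanning set $\{\alpha\beta^*\}$, but I expect the Graded Uniqueness Theorem to deliver it at once, since the construction supplies precisely its two hypotheses---gradedness of $\phi$ and non-vanishing on the vertex idempotents.
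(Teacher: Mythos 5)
Your proposal is correct and follows essentially the same route as the paper: define $\phi$ on generators by sending vertices, edges, and ghost edges of $E_S$ to the corresponding elements of $L_K(E)$, verify the Cuntz--Krieger relations transfer (using that a source receives no edges), obtain injectivity from the graded uniqueness theorem, identify the image with $VL_K(E)V$, and conclude fullness from $r(s^{-1}(w))\subseteq V$ together with the regularity of $w$, so that Lemma~\ref{corner-ME-lem} gives the Morita equivalence. You spell out a few verifications (the relation check and the spanning-set identification of the corner) that the paper leaves as "straightforward," but the argument is the same.
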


\begin{proof}
Let $\{v, e, e^* : v \in E^0, e \in E^1 \}$ be a generating $E$-family in $L_K(E)$.  For each $v \in E_S^0$ let $P_v := v$, and for each $e \in E_S^1$ let $S_e := e$.  Then $\{P_v, S_e, S_e^* : v \in E_S^0, e \in E_S^1 \}$ is an $E_S$-family in $L_K(E)$, and hence there exists a homomorphism $\phi :  L_K(E_S) \to L_K(E)$ with $\phi (v) = P_v$, $\phi(e) = S_e$, and $\phi(e^*) = S_e^*$ for all $v \in E_S^0$ and all $e \in E_S^1$.  Since $P_v = v$ has degree $0$, $S_e = e$ has degree $1$, and $S_e^* = e^*$ has degree $-1$, we see that $\phi$ is a graded homomorphism.  Therefore, since $\phi(v) = P_v \neq 0$ for all $v \in E_S^0$, the graded uniqueness theorem implies that $\phi$ is injective, and $\phi$ is an isomorphism onto $\im \phi$.  However, if we let $V := E^0 \setminus \{ w \}$, then the corner determined by $V$ is $VL_K(E) V = \im \phi$.  Moreover, since $w$ is a regular vertex and a source, $r(s^{-1}(w)) \subseteq V$, and $\Sigma (V) = E^0$.  Hence $\im \phi$ is a full corner of $L_K(E)$, and by Lemma~\ref{corner-ME-lem} $\im \phi$ is Morita equivalent to $L_K(E)$.
\end{proof}

\begin{definition}[Move (O): Outsplit at a Non-Sink] \label{Move-O-def}
Let $E = (E^0, E^1, r, s)$ be a graph, and let $w \in E^0$ be vertex that is not a sink.   Partition $s^{-1}(w)$ as a disjoint union of a finite number of nonempty sets $$s^{-1}(w) = \mathcal{E}_1 \sqcup \mathcal{E}_2 \sqcup \ldots \sqcup \mathcal{E}_n$$ with the property that at most one of the $\mathcal{E}_i$ is infinite.  The graph $E_O := (E_O^0, E_O^1, r_O, s_O)$ is defined by 
\begin{align*}
E_O^0 &:= \{v^1 : \text{$v \in E^0$ and $v \neq w$} \} \cup \{ w^1, \ldots, w^n \} \\
E_O^1 &:= \{ e^1 : \text{$e \in E^1$ and $r(e) \neq w$} \} \cup \{ e^1, \ldots, e^n : e \in E^1 \text{ and } r(e) = w\} \\
r_{E_O} (e^i) &:= \begin{cases} r(e)^1 & \text{if $e \in E^1$ and $r(e) \neq w$} \\ w^i & \text{if $e \in E^1$ and $r(e) = w$}
 \end{cases} \\
s_{E_O} (e^{i }) &:= \begin{cases} s(e)^1 & \text{if $e \in E^1$ and $s(e) \neq w$} \\ s(e)^j & \text{if $e \in E^1$ and $s(e) = w$ with $e \in \mathcal{E}_j$} 
 \end{cases}
\end{align*}
 We call $E_O$ the \emph{graph obtained by outsplitting $E$ at $w$}, and say $E_O$ is formed by \emph{performing Move (O)} to $E$.
\end{definition}

\begin{proposition}[Cf.~Theorem~3.2 of \cite{BP}] \label{Move-O-ME-prop}
Suppose $E$ is a graph, $w \in E^0$ is vertex that is not a sink, and a partition $$s^{-1}(w) = \mathcal{E}_1 \sqcup \mathcal{E}_2 \sqcup \ldots \sqcup \mathcal{E}_n$$ is chosen with the $\mathcal{E}_i$ disjoint nonempty sets and at most one of the $\mathcal{E}_i$ is infinite.  Then for any field $K$ it is the case that $L_K(E_O)$ is isomorphic to $L_K(E)$ via a graded isomorphism.
\end{proposition}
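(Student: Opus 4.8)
The plan is to construct an explicit $E_O$-family inside $L_K(E)$, invoke the universal property of $L_K(E_O)$ to obtain a homomorphism $\phi \colon L_K(E_O) \to L_K(E)$, and then show $\phi$ is a graded isomorphism: injectivity via the graded uniqueness theorem and surjectivity by a direct generation argument, exactly as in the proof of Proposition~\ref{Move-S-ME-prop}. Let $\{v, e, e^* : v \in E^0, e \in E^1\}$ be a generating $E$-family in $L_K(E)$. For each $v \in E^0$ with $v \neq w$ set $Q_{v^1} := v$, and for the split vertices I will produce orthogonal idempotents $Q_{w^1}, \dots, Q_{w^n}$ with $\sum_{i=1}^n Q_{w^i} = w$. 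For the edges I set $T_{e^1} := e$ when $r(e) \neq w$, and $T_{e^i} := e\,Q_{w^i}$ for $i = 1, \dots, n$ when $r(e) = w$; in the latter case $\sum_{i=1}^n T_{e^i} = e\,(\sum_i Q_{w^i}) = ew = e$.

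The construction of the $Q_{w^i}$ is where the two cases, $w \in E^0_\reg$ versus $w$ an infinite emitter, diverge, and this is the crux of the argument. If $w \in E^0_\reg$, then every $\mathcal{E}_i$ is finite and I set $Q_{w^i} := \sum_{e \in \mathcal{E}_i} e e^*$; these are orthogonal idempotents (using $e^* f = \delta_{e,f}\, r(e)$), and relation (4) at the regular vertex $w$ gives $\sum_i Q_{w^i} = \sum_{e \in s^{-1}(w)} e e^* = w$. If $w$ is an infinite emitter, then exactly one block, say $\mathcal{E}_n$, is infinite while $\mathcal{E}_1, \dots, \mathcal{E}_{n-1}$ are finite; I keep $Q_{w^i} := \sum_{e \in \mathcal{E}_i} e e^*$ for $i < n$ and define $Q_{w^n} := w - \sum_{e \in \mathcal{E}_1 \sqcup \cdots \sqcup \mathcal{E}_{n-1}} e e^*$. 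The key point is that $\mathcal{E}_1 \sqcup \cdots \sqcup \mathcal{E}_{n-1}$ is finite, so $Q_{w^n}$ is a genuine (finite) element of $L_K(E)$; a short computation, using that the $e e^*$ are orthogonal idempotents dominated by $w$, shows $Q_{w^n}$ is idempotent and orthogonal to the remaining $Q_{w^i}$, and again $\sum_i Q_{w^i} = w$.

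With the family in hand I would verify the defining relations of an $E_O$-family: the source/range relations (1),(2) and the orthogonality relation (3) reduce to the identities $e^* f = \delta_{e,f}\, r(e)$ and $Q_{w^i} Q_{w^j} = \delta_{i,j}\, Q_{w^i}$ in $L_K(E)$, while relation (4) must be checked at each regular vertex of $E_O$. One observes that $v^1$ (with $v \neq w$) is regular in $E_O$ iff $v$ is regular in $E$, and that $w^i$ is regular in $E_O$ iff $\mathcal{E}_i$ is finite; in each case the telescoping identity $\sum_i T_{e^i} T_{e^i}^* = e e^*$ for edges $e$ with $r(e) = w$ lets the CK sum in $E_O$ collapse to the CK sum in $E$. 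The universal property then yields $\phi$, and since each $Q_{v^i}$ has degree $0$ and each $T_{e^i}$ has degree $1$, the map $\phi$ is graded. As every $Q_{v^i}$ is a nonzero idempotent (for $Q_{w^n}$ this uses the standard fact that $w - \sum_{e \in F} e e^* \neq 0$ for any finite $F \subseteq s^{-1}(w)$ when $w$ is an infinite emitter), the graded uniqueness theorem gives injectivity. Surjectivity is immediate, since $\im \phi$ contains each $v$ (via $\sum_i Q_{w^i} = w$) and each $e$ and $e^*$ (via $\sum_i T_{e^i} = e$), and these generate $L_K(E)$; hence $\phi$ is a graded isomorphism.

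I expect the main obstacle to be the infinite-emitter case. The naive choice $Q_{w^i} = \sum_{e \in \mathcal{E}_i} e e^*$ is meaningless for the infinite block, since $L_K(E)$ contains only finite sums, and it is precisely the hypothesis that at most one $\mathcal{E}_i$ is infinite that allows the repair $Q_{w^n} = w - \sum_{e \in \mathcal{E}_1 \sqcup \cdots \sqcup \mathcal{E}_{n-1}} e e^*$ to be a well-defined finite element. The rest is careful bookkeeping: confirming $Q_{w^n}$ is a nonzero idempotent orthogonal to the other $Q_{w^i}$, and correctly matching which vertices of $E_O$ are regular so that relation (4) is imposed in exactly the right places.
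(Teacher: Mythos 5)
Your proof is correct, but it runs in the opposite direction from the paper's. The paper builds an $E$-family inside $L_K(E_O)$, setting $P_w := \sum_{i=1}^n w^i$ and $S_e := \sum_{i=1}^n e^i$ when $r(e)=w$, and so obtains $\phi \colon L_K(E) \to L_K(E_O)$; injectivity again comes from the graded uniqueness theorem, and surjectivity from checking that the split generators of $L_K(E_O)$ lie in the image. You instead build an $E_O$-family inside $L_K(E)$ and map $L_K(E_O) \to L_K(E)$. The two routes distribute the work differently: in the paper's direction the sums $\sum_i w^i$ and $\sum_i e^i$ are finite no matter what, so the family is trivially well defined, and the hypothesis that at most one $\mathcal{E}_i$ is infinite is consumed in the surjectivity step (if two blocks were infinite, two of the $w^i$ would be infinite emitters of $E_O$ and could not be recovered individually from $P_w$). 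In your direction that same hypothesis is consumed at the outset, in making $Q_{w^n} = w - \sum_{e \in \mathcal{E}_1 \sqcup \cdots \sqcup \mathcal{E}_{n-1}} ee^*$ a legitimate finite element of $L_K(E)$ --- which you correctly identify as the crux --- while surjectivity becomes the easy step via $\sum_i Q_{w^i} = w$ and $\sum_i T_{e^i} = e$. Your version costs a bit more bookkeeping (orthogonality and idempotence of $Q_{w^n}$, nonvanishing of $w - \sum_{e\in F} ee^*$ at an infinite emitter, and matching the regular vertices of $E_O$ for relation (4)), but it has the side benefit of exhibiting the inverse isomorphism explicitly and of making visible exactly why the partition is not allowed to have two infinite blocks.
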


\begin{proof}
For $v \in E^0$ and $e \in E^1$ define
$$P_v := \begin{cases} v^1 & \text{if $v \neq w$} \\ \sum_{i=1}^n w^i & \text{if $v = w$} \end{cases} \qquad \text{ and } \qquad S_e := \begin{cases} e^1 & \text{if $r(e) \neq w$} \\ \sum_{i=1}^n e^i & \text{if $r(e) = w$.} \end{cases}.$$  It is straightforward to verify that $\{ P_v, S_e, S_e^* : v \in E^0, e \in E^1 \}$ is an $E$-family in $L_K(E_O)$, and hence there exists a homomorphism $\phi :  L_K(E) \to L_K(E_O)$ with $\phi (v) = P_v$, $\phi(e) = S_e$, and $\phi(e^*) = S_e^*$ for all $v \in E^0$ and all $e \in E^1$.  It is also straightforward to verify that $\im \phi = L_K(E_O)$ so that $\phi$ is surjective.  Since $P_v$ has degree $0$, $S_e$ has degree $1$, and $S_e^*$ has degree $-1$, we see that $\phi$ is a graded homomorphism.  Therefore, since $\phi(v) = P_v \neq 0$ for all $v \in E_S^0$, the graded uniqueness theorem implies that $\phi$ is injective, and $\phi$ is a graded isomorphism.
\end{proof}

\begin{definition}[Move (I): Insplit at a Regular Non-Source] \label{Move-I-def}
Suppose that $E = (E^0, E^1, r, s)$ is a graph, and let $w \in E^0$ be a regular vertex that is not a source.   Partition $r^{-1}(w)$ as a disjoint union of a finite number of nonempty sets $$r^{-1}(w) = \mathcal{E}_1 \sqcup \mathcal{E}_2 \sqcup \ldots \sqcup \mathcal{E}_n.$$  The graph $E_I := (E_I^0, E_I^1, r_I, s_I)$ is defined by 
\begin{align*}
E_I^0 &:= \{v^1 : \text{$v \in E^0$ and $v \neq w$} \} \cup \{ w^1, \ldots, w^n \} \\
E_I^1 &:= \{ e^1 : \text{$e \in E^1$ and $s(e) \neq w$} \} \cup \{ e^1, \ldots, e^n : e \in E^1 \text{ and } s(e) = w\} \\
r_{E_I} (e^{i }) &:= \begin{cases} r(e)^1 & \text{if $e \in E^1$ and $r(e) \neq w$} \\ r(e)^j & \text{if $e \in E^1$ and $r(e) = w$ with $e \in \mathcal{E}_j$}  \end{cases} \\
s_{E_I} (e^i) &:= \begin{cases} s(e)^1 & \text{if $e \in E^1$ and $s(e) \neq w$} \\ w^i & \text{if $e \in E^1$ and $s(e) = w$}.
\end{cases}
\end{align*}
We call $E_I$ the \emph{graph obtained by insplitting $E$ at $w$}, and say $E_I$ is formed by \emph{performing Move (I)} to $E$.
\end{definition}

\begin{proposition}[Cf.~Theorem~5.3 of \cite{BP}] \label{Move-I-ME-prop}
Suppose $E$ is a graph, $w \in E^0$ is a regular vertex that is not a source, and a partition $$r^{-1}(w) = \mathcal{E}_1 \sqcup \mathcal{E}_2 \sqcup \ldots \sqcup \mathcal{E}_n$$ is chosen with the $\mathcal{E}_i$ disjoint nonempty sets.  Let $K$ be any field and define $V := \{ v^1 : v \in E^0 \}$.  Then $VL_K(E_I)V$ is a full corner of $L_K(E_I)$ that is isomorphic to $L_K(E)$ via a graded isomorphism.  Consequently, $L_K(E_I)$ is Morita equivalent to $L_K(E)$.
\end{proposition}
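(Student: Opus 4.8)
The plan is to follow the template of Proposition~\ref{Move-S-ME-prop} and Proposition~\ref{Move-O-ME-prop}: I would exhibit an $E$-family inside the corner $VL_K(E_I)V$, use the universal property together with the graded uniqueness theorem to produce a graded embedding $\phi : L_K(E) \to L_K(E_I)$ whose image lies in the corner, then show the image is \emph{all} of the corner, and finally verify fullness and invoke Lemma~\ref{corner-ME-lem}. The one genuinely new feature, compared with Moves (S) and (O), is that the naive assignment $w \mapsto w^1$, $e \mapsto e^1$ does not work: an edge $e$ with $r(e) = w$ and $e \in \mathcal{E}_j$ lifts to the edge $e^1$ whose range in $E_I$ is $w^j$, not $w^1 = P_w$, so the relation $e\, r(e) = e$ fails whenever $j \neq 1$.

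To repair this I would first record the partial isometries identifying the copies of $w$. Note that since $w$ is regular in $E$, each $w^i$ is regular in $E_I$. For $j = 1, \dots, n$ set $T_j := \sum_{\{g \in E^1 : s(g) = w\}} g^j (g^1)^*$ (a finite sum), with $T_1 = w^1$; using relations~(3) and~(4) at the regular vertices $w^1, \dots, w^n$ of $E_I$ one checks $T_j^* T_j = w^1$ and $T_j T_j^* = w^j$, so $T_j$ identifies the corner at $w^1$ with the corner at $w^j$. I then define $P_v := v^1$ for all $v \in E^0$, and $S_e := e^1$ when $r(e) \neq w$, while $S_e := e^1 T_j$ when $r(e) = w$ with $e \in \mathcal{E}_j$; the trailing $T_j$ exists precisely to move the range of $e^1$ from $w^j$ back to $w^1$. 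Checking that $\{P_v, S_e, S_e^*\}$ is an $E$-family is then routine: relation~(2) reduces to $T_j^* T_j = w^1$, relation~(3) is immediate from $(e^1)^* f^1 = \delta_{e,f}\, r_{E_I}(e^1)$, and the key point for relation~(4) at $w$ is the collapse $S_e S_e^* = e^1 T_j T_j^* (e^1)^* = e^1 w^j (e^1)^* = e^1 (e^1)^*$, which makes every summand independent of $j$, so that $\sum_{s(e) = w} S_e S_e^* = \sum_{s(e)=w} e^1 (e^1)^* = w^1$. Each $P_v$, $S_e$ lies in the corner and has degree $0$, $1$ respectively, so the resulting $\phi$ is graded with image in $VL_K(E_I)V$, and injectivity follows from the graded uniqueness theorem since $\phi(v) = v^1 \neq 0$.

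The substantive step is to prove $\im \phi = VL_K(E_I)V$. For this I would use that the copy indices along any $E_I$-path are forced by the insplitting: a path in $E_I$ beginning at a vertex of $V$ is exactly the canonical lift $\hat\alpha$ of a \emph{unique} path $\alpha$ in $E$, where the copy used by each edge leaving $w$ is dictated by the block of the preceding edge into $w$. A direct induction then shows $S_\alpha = \hat\alpha$ when $r(\alpha) \neq w$, and $S_\alpha = \hat\alpha\, T_j$ (with $\hat\alpha$ ending at $w^j$) when $r(\alpha) = w$ and the last edge of $\alpha$ lies in $\mathcal{E}_j$. Consequently, given a spanning element $\mu\nu^*$ of the corner with $\mu = \hat\alpha$, $\nu = \hat\beta$ and $r(\mu) = r(\nu)$: if this common range is some $u^1 \in V$ then $\mu\nu^* = S_\alpha S_\beta^* = \phi(\alpha\beta^*)$, while if it is $w^j$ then, using $T_j T_j^* = w^j$, we get $\mu\nu^* = \mu T_j T_j^* \nu^* = (\hat\alpha T_j)(\hat\beta T_j)^* = S_\alpha S_\beta^* = \phi(\alpha\beta^*)$. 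In either case $\mu\nu^* \in \im\phi$, so $\phi$ is a graded isomorphism onto $VL_K(E_I)V$.

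Finally, fullness is easy: for each $j$ choose any $e \in \mathcal{E}_j$ (nonempty, with $r(e) = w$); its lift $e^1$ runs from $s(e)^1 \in V$, or from $w^1 \in V$ if $s(e) = w$, to $w^j$, so $w^j \in \Sigma(V)$ and hence $\Sigma(V) = E_I^0$. Thus $VL_K(E_I)V$ is a full corner, Lemma~\ref{corner-ME-lem} gives that it is Morita equivalent to $L_K(E_I)$, and combining this with $VL_K(E_I)V \cong L_K(E)$ yields the Morita equivalence of $L_K(E)$ and $L_K(E_I)$. I expect the main obstacle to be the surjectivity-onto-the-corner step --- matching $E_I$-paths from $V$ with canonical lifts and correctly tracking the trailing factor $T_j$ --- whereas the $E$-family relations and the fullness computation are routine.
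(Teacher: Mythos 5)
Your proposal is correct and takes essentially the same route as the paper: your $S_e := e^1 T_j$ expands to exactly the paper's formula $S_e = \sum_{\{f : s(f)=w\}} e^1 f^{j} (f^1)^*$, and the remaining steps (graded uniqueness theorem for injectivity, identifying $\im\phi$ with the corner $VL_K(E_I)V$, fullness, and Lemma~\ref{corner-ME-lem}) follow the paper's outline. The only difference is that you write out in detail the verifications the paper labels ``straightforward,'' namely the path-lifting argument for surjectivity onto the corner and the fullness of $V$.
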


\begin{proof}
For $v \in E^0$ and $e \in E^1$ define $P_v := v^1$ and 
$$S_e := \begin{cases} \ e^1 & \text{if $r(e) \neq w$} \\ \displaystyle \sum_{\{ f \in E^1 : s(f) = w\}} e^1 f^i (f^1)^{*}  & \text{if $r(e) = w$ and $e \in \mathcal{E}_i$.} \end{cases} $$  It is straightforward to verify that $\{ P_v, S_e, S_e^* : v \in E^0, e \in E^1 \}$ is an $E$-family in $L_K(E_I)$, and hence there exists a homomorphism $\phi :  L_K(E) \to L_K(E_I)$ with $\phi (v) = P_v$, $\phi(e) = S_e$, and $\phi(e^*) = S_e^*$ for all $v \in E^0$ and all $e \in E^1$.   Since $P_v$ has degree $0$, $S_e$ has degree $1$, and $S_e^*$ has degree $-1$, we see that $\phi$ is a graded homomorphism.  Therefore, since $\phi(v) = P_v \neq 0$ for all $v \in E_S^0$, the graded uniqueness theorem implies that $\phi$ is injective.  In addition, if we let $V := \{ v^1 : v \in E^0 \}$, then it is straightforward to verify that the corner determined by $V$ is $VL_K(E_I)V = \im \phi$.  Moreover, since $w$ is a regular vertex in $E$, $w^i$ is a regular vertex in $E_I$ for all $1 \leq i \leq n$, and hence $\Sigma (V) = E^0$.  Hence $\im \phi$ is a full corner of $L_K(E_I)$, and by Lemma~\ref{corner-ME-lem} $\im \phi$ is Morita equivalent to $L_K(E)$.
\end{proof}

\begin{definition}[Move (R): Reduction at a Regular Vertex] \label{Move-R-def}
Suppose that $E = (E^0, E^1, r, s)$ is a graph, and let $w \in E^0$ be a regular vertex with the property that $s(r^{-1}(w)) = \{ x \}$, $s^{-1}(w) = \{ f \}$, and $r(f) \neq w$.  The graph $E_R := (E_R^0, E_R^1, r_R, s_R)$ is defined by 
\begin{align*}
E^0_R &:= E^0 \setminus \{ w \} \\
E^1_R &:= E^1 \setminus ( \{ f \} \cup r^{-1}(w) ) \cup \{ e_f : e \in E^1 \text{ and } r(e) =w \} \\
r_R(e) &:= r(e) \text{ if $e \in E^1 \setminus ( \{ f \} \cup r^{-1}(w) )$} \qquad \text{ and } \qquad r_R(e_f) := r(f) \\
s_R(e) &:= s(e) \text{ if $e \in E^1 \setminus ( \{ f \} \cup r^{-1}(w) )$} \qquad \text{ and } \qquad s_R(e_f) := s(e) = x
\end{align*}
We call $E_R$ the \emph{graph obtained by reducing $E$ at $w$}, and say $E_R$ is a \emph{reduction} of $E$ or that $E_I$ is formed by \emph{performing Move (R)} to $E$.
\end{definition}

\begin{proposition}[Cf.~Section~3 of \cite{Sor}] \label{Move-R-ME-prop}
Suppose $E$ is a graph and $w \in E^0$ is a regular vertex with the property that $s(r^{-1}(w)) = \{ x \}$, $s^{-1}(w) = \{ f \}$, and $r(f) \neq w$.  If $K$ is a field, and if we set $V := E^0 \setminus \{ w \}$, then $VL_K(E)V$ is a full corner of $L_K(E)$ that is isomorphic to $L_K(E_R)$.  Consequently, $L_K(E_R)$ is Morita equivalent to $L_K(E)$.
\end{proposition}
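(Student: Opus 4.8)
The plan is to mirror the proofs of Propositions~\ref{Move-S-ME-prop} and \ref{Move-I-ME-prop}: exhibit an $E_R$-family inside $L_K(E)$ whose induced homomorphism has image equal to the corner $VL_K(E)V$, then invoke Lemma~\ref{corner-ME-lem}. Concretely, let $\{v, e, e^* : v \in E^0, e \in E^1\}$ be a generating $E$-family in $L_K(E)$. For each $v \in E_R^0 = E^0 \setminus \{w\}$ set $P_v := v$; for each $e \in E^1 \setminus (\{f\} \cup r^{-1}(w))$ set $S_e := e$; and for each new edge $e_f$ (so $r(e) = w$) set $S_{e_f} := ef$. The single identity driving every verification is the Cuntz--Krieger relation at $w$: since $s^{-1}(w) = \{f\}$ we have $ff^* = w$, and of course $f^*f = r(f)$.

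First I would check that $\{P_v, S_g, S_g^* : v \in E_R^0,\ g \in E_R^1\}$ is an $E_R$-family. Relations (1)--(3) are routine; the only substantive point is the Cuntz--Krieger relation (4), and the only vertex at which anything changes is $x := s(r^{-1}(w))$. Note that $x \neq w$ (the hypothesis $r(f) \neq w$ forces $w$ to carry no loop), and that $x$ is regular in $E_R$ precisely when it is regular in $E$, since reduction is a bijection between the edges emitted by $x$ in $E$ and those emitted by $x$ in $E_R$. For $e \in r^{-1}(w)$ one computes $S_{e_f} S_{e_f}^* = e f f^* e^* = e w e^* = ee^*$, so the edges $\{e_f\}$ contribute to relation (4) at $x$ in $E_R$ exactly what the edges $r^{-1}(w)$ contributed at $x$ in $E$, while the remaining edges out of $x$ are untouched; hence relation (4) at $x$ in $L_K(E_R)$ pulls back to relation (4) at $x$ in $L_K(E)$. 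By the universal property this yields a homomorphism $\phi : L_K(E_R) \to L_K(E)$ with $\phi(v) = P_v$, $\phi(e) = S_e$, and $\phi(e_f) = S_{e_f}$.

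Next I would identify $\im \phi$ with the corner and check fullness. Using $w = ff^*$, any spanning monomial $\alpha\beta^*$ of $VL_K(E)V$ (so $s(\alpha), s(\beta) \neq w$) with $r(\alpha) = r(\beta) = w$ can be rewritten as $(\alpha f)(\beta f)^*$, so the corner is spanned by monomials all of whose endpoints avoid $w$. In such a monomial any interior edge $e \in r^{-1}(w)$ is forced to be followed by $f$ (the unique edge out of $w$), and replacing each such pair $ef$ by the symbol $e_f$ exhibits the monomial as $\phi$ of the corresponding $E_R$-monomial; thus $\im \phi = VL_K(E)V$. Since $x \in V$ emits an edge into $w$, we have $w \in \Sigma(V)$ and hence $\Sigma(V) = E^0$, so the corner is full, and Lemma~\ref{corner-ME-lem} delivers the asserted Morita equivalence once injectivity is established.

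The main obstacle is injectivity, and it is genuinely different from the earlier moves. Because $S_{e_f} = ef$ has degree $2$, the map $\phi$ is \emph{not} graded for the standard $\Z$-grading, so the graded uniqueness theorem used in Propositions~\ref{Move-S-ME-prop}--\ref{Move-I-ME-prop} is unavailable (this is precisely why the statement claims only an isomorphism, not a graded one), and we cannot assume $E_R$ satisfies Condition~(L) either. My plan is to prove injectivity by constructing an explicit inverse: the rewriting $ef \leftrightarrow e_f$ of the previous paragraph sets up a bijection between the spanning monomials of $L_K(E_R)$ and the spanning monomials of the corner whose endpoints avoid $w$, and I would show that the induced linear map $\psi : VL_K(E)V \to L_K(E_R)$ is a well-defined algebra homomorphism with $\psi\phi = \mathrm{id}$ and $\phi\psi = \mathrm{id}$. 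The delicate step is verifying that $\psi$ respects the relations of the corner; I expect to handle this by passing to the standard normal-form basis for Leavitt path algebras on both sides and checking that $\phi$ carries basis elements to basis elements, so that no linear relations are created or destroyed.
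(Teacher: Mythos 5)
Your construction of the $E_R$-family, the identification of $\im \phi$ with the corner $VL_K(E)V$, and the fullness argument all match the paper's proof, and your supporting checks (the identity $ff^* = w$, the verification of relation (4) at $x$, the rewriting $\alpha\beta^* = (\alpha f)(\beta f)^*$ for monomials ending at $w$) are correct. You are also right that the failure of $\phi$ to be graded rules out the graded uniqueness theorem. Where you diverge from the paper is the injectivity step. The paper does not construct an inverse: it invokes the Cuntz--Krieger-type uniqueness theorem of \cite[Theorem~3.7]{AMMS}, whose hypotheses are verified in two lines --- $\phi(v) = v \neq 0$ for every $v \in E_R^0$, and for any cycle $\alpha$ without exits in $E_R$ the image $\phi(\alpha)$ is a genuine cycle in $E$ of length at least $|\alpha|$, hence a non-nilpotent element of nonzero degree. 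This sidesteps normal forms entirely.

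Your alternative --- an explicit inverse $\psi$ built from the monomial rewriting $ef \leftrightarrow e_f$ --- is viable in principle, but the step you defer (``verifying that $\psi$ respects the relations of the corner'') is precisely where all the work lies, and your sketch of it is not yet an argument. Two concrete issues: first, the spanning monomials of the corner are not linearly independent, and the relation $ff^* = w$ identifies $\alpha\beta^*$ with $(\alpha f)(\beta f)^*$ whenever $r(\alpha) = r(\beta) = w$, so your ``bijection between spanning monomials'' is at best a bijection between equivalence classes, and well-definedness of $\psi$ must be checked against this identification; second, relation (4) at $x$ involves the edges of $r^{-1}(w)$ in $E$ but the replaced edges $e_f$ in $E_R$, so if you pass to the standard normal-form basis (which depends on a choice of a distinguished edge at each regular vertex) you must coordinate those choices at $x$ and at $w$ (where the distinguished edge is forced to be $f$) before you can assert that $\phi$ carries basis elements to basis elements and that ``no linear relations are created or destroyed.'' None of this is insurmountable, but as written it is a gap; the cleanest fix is to replace it with the appeal to \cite[Theorem~3.7]{AMMS} as in the paper.
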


\begin{proof}
For each $v \in E_R^0$ define $P_v := v$.  For $e \in E^1 \setminus ( \{ f \} \cup r^{-1}(w) )$ define $S_e := e$ and for $e_f \in E^1_R$ define $S_{e_f} := ef$.  It is straightforward to verify that $\{ P_v, S_e, S_e^* : v \in E_R^0, e \in E_R^1 \}$ is an $E_R$-family in $L_K(E)$, and hence there exists a homomorphism $\phi :  L_K(E_R) \to L_K(E)$ with $\phi (v) = P_v$, $\phi(e) = S_e$, and $\phi(e^*) = S_e^*$ for all $v \in E_R^0$ and all $e \in E_R^1$.   (Note that this homomorphism is not graded.)  If $v \in E^0_R$, then we see that $\phi (v) = P_v \neq 0$.  In addition, if $\alpha = e_1 \ldots e_n$ is a cycle in $E_R$ with no exits, then $\phi(\alpha) = \phi(e_1) \ldots \phi(e_n)$, where each $\phi(e_i)$ is equal to either $e_i$ or $e_i f$.  Hence $\phi (\alpha)$ is a cycle of length greater than or equal to the length of $\alpha$, and in particular $\phi(\alpha)$ is a non-nilpotent element of nonzero degree.  It follows from \cite[Theorem~3.7]{AMMS} that $\phi$ is injective.  In addition, if we let $V := E^0 \setminus \{ w \}$, then it is straightforward to verify that the corner determined by $V$ is $VL_K(E)V = \im \phi$.  Moreover, since $w$ emits exactly one edge $f$ to $r(f) \neq w$, we have $\Sigma (V) = E^0$.  Hence $\im \phi$ is a full corner of $L_K(E)$, and by Lemma~\ref{corner-ME-lem} $\im \phi$ is Morita equivalent to $L_K(E)$.
\end{proof}

\begin{remark}
Note that the homomorphism described in Proposition~\ref{Move-R-ME-prop} is not graded.
\end{remark}

When applied to a graph, the four moves (S), (O), (I), and (R) preserve Morita equivalence of the associated Leavitt path algebra; moreover, this is done in a very concrete way --- for each move we can describe a set of vertices that determines a full corner for implementing the Morita equivalence.  We make this precise in the following theorem.

\begin{definition} \label{move-equivalent-def}
We let $\sim_M$ denote the equivalence relation on $\mathcal{S}$ generated by the four moves (S), (O), (I), and (R).  In other words, if $E, F \in \mathcal{S}$, then $E \sim_M F$ if and only if there exists a finite sequence of graphs $E_0, E_1, \ldots, E_n$ such that $E_0 = E$, $E_n = F$, and for all $1 \leq i \leq n-1$, either $E_i$ is obtained by applying one of the four moves (S), (O), (I), or (R) to $E_{i+1}$, or $E_{i+1}$ is obtained by applying one of the four moves (S), (O), (I), or (R) to $E_{i}$.  If $E$ and $F$ are graphs and $E \sim_M F$, then we say $E$ is \emph{move equivalent} to $F$.
\end{definition}

\begin{theorem} \label{moves-imply-concrete-ME-thm}
If $E$ and $F$ are graphs and $E \sim_M F$, then for any field $K$ it is the case that $L_K(E)$ is Morita equivalent to $L_K(F)$.  Moreover, in this case there exists a finite sequence of graphs $E_0, E_1, \ldots, E_n$  with $E_0 = E$ and $E_n = F$, and there exist subsets of vertices $V_i \subseteq E_i^0$ and $W_i \subseteq E_{i+1}^0$ for each $0 \leq i \leq n-1$ such that 
\begin{itemize}
\item[(a)] $V_i  L_K(E_i) V_i$ is a full corner of $L_K(E_i)$ for $0 \leq i \leq n-1$,
\item[(b)] $W_i L_K(E_{i+1}) W_i$ is a full corner of $L_K(E_{i+1})$ for $0 \leq i \leq n-1$, and
\item[(c)] $V_i L_K(E_i) V_i \cong W_i L_K(E_{i+1}) W_i$ for $0 \le i \leq n-1$.
\end{itemize}
\end{theorem}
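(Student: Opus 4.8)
The plan is to deduce the theorem by assembling the four single-move results Proposition~\ref{Move-S-ME-prop}, Proposition~\ref{Move-O-ME-prop}, Proposition~\ref{Move-I-ME-prop}, and Proposition~\ref{Move-R-ME-prop}, and then invoking transitivity of Morita equivalence. By Definition~\ref{move-equivalent-def}, the hypothesis $E \sim_M F$ gives a finite sequence of graphs $E_0 = E, E_1, \ldots, E_n = F$ in which each consecutive pair $(E_i, E_{i+1})$ is related by a single application of one of the moves (S), (O), (I), (R), possibly in either direction. It therefore suffices to produce, for each link of this chain, subsets $V_i \subseteq E_i^0$ and $W_i \subseteq E_{i+1}^0$ witnessing conditions (a), (b), and (c); the asserted Morita equivalence of $L_K(E)$ and $L_K(F)$ then follows, since conditions (a) and (c), together with Lemma~\ref{corner-ME-lem}, show that $L_K(E_i)$ is Morita equivalent to $V_i L_K(E_i) V_i$, which is isomorphic to $W_i L_K(E_{i+1}) W_i$, which in turn is Morita equivalent to $L_K(E_{i+1})$, and Morita equivalence, being transitive, then links $L_K(E)$ to $L_K(F)$.

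The key observation is that each of the four move propositions already supplies precisely such a pair of subsets, in which one member is the entire vertex set. Indeed, for any graph $G$, choosing $V = G^0$ yields $V L_K(G) V = L_K(G)$ and $\Sigma(V) = G^0$, so the whole algebra is trivially a full corner of itself; this will account for one of $V_i, W_i$ at each link, while the other is read off from the relevant proposition. Concretely, if $E_{i+1}$ is obtained from $E_i$ by Move (S), then Proposition~\ref{Move-S-ME-prop} shows $V L_K(E_i) V$ is a full corner isomorphic to $L_K(E_{i+1})$ for $V = E_i^0 \setminus \{w\}$, so I take $V_i := E_i^0 \setminus \{w\}$ and $W_i := E_{i+1}^0$; Move (R) is handled identically using Proposition~\ref{Move-R-ME-prop}. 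If $E_{i+1}$ is obtained from $E_i$ by Move (O), then Proposition~\ref{Move-O-ME-prop} gives an isomorphism $L_K(E_i) \cong L_K(E_{i+1})$, and I take $V_i := E_i^0$ and $W_i := E_{i+1}^0$. If $E_{i+1}$ is obtained from $E_i$ by Move (I), then Proposition~\ref{Move-I-ME-prop} shows $W L_K(E_{i+1}) W$ is a full corner isomorphic to $L_K(E_i)$ for $W = \{v^1 : v \in E_i^0\}$, and I take $V_i := E_i^0$ and $W_i := \{ v^1 : v \in E_i^0 \}$. When a move is applied in the reverse direction, so that $E_i$ is obtained from $E_{i+1}$, I simply interchange the roles of $V_i$ and $W_i$ in the above prescriptions.

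With these choices, verifying (a), (b), and (c) is immediate from the cited propositions together with the trivial full-corner observation: the full vertex set cuts out the whole algebra as a full corner, the named subset cuts out the full corner guaranteed by the appropriate proposition, and the isomorphism required in (c) is exactly the isomorphism produced by that proposition (graded in the cases of Moves (S), (O), (I), and not graded in the case of Move (R)). I do not expect any genuine obstacle here, since the statement is essentially an organized repackaging of the four preceding results; the only point demanding care is the bookkeeping, namely tracking at each link which of the two graphs is the larger one so as to assign $V_i$ and $W_i$ correctly, and handling both orientations of each move. Once the chain is traversed in this way, transitivity of Morita equivalence closes the argument.
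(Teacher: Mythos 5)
Your proposal is correct and follows essentially the same route as the paper's own (much terser) proof: both arguments reduce the theorem to the four single-move propositions, noting that each move realizes one Leavitt path algebra as a full corner, determined by a vertex subset, of the other, and then chain these together using transitivity of Morita equivalence. Your version merely makes explicit the bookkeeping of the subsets $V_i$ and $W_i$ (taking the full vertex set on the side where the proposition gives the whole algebra), which the paper leaves implicit.
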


\begin{proof}
It is shown in Proposition~\ref{Move-S-ME-prop}, Proposition~\ref{Move-O-ME-prop}, Proposition~\ref{Move-I-ME-prop}, and Proposition~\ref{Move-R-ME-prop} that when the moves (S), (O), (I), and (R) are applied to graphs they each preserve Morita equivalence of the associated Leavitt path algebras, and moreover, if one graph is obtained from another via one of these moves, then the Leavitt path algebra of one graph is isomorphic to a full corner of the other graph, and this full corner is determined by a set of vertices in the graph.
\end{proof}

We make some remarks about the four moves (S), (O), (I), and (R).

\begin{remark}
Move (O) preserves the isomorphism class of the associated Leavitt path algebra, but in general the other three moves preserve only Morita equivalence and not isomorphism class.  Moves (S), (O), and (I) preserve the grading of the associated Leavitt path algebras, but move (R) does not.  
\end{remark}

\begin{remark} \label{move-effects-rem}
Moves (S), (I), and (R) must be performed at regular vertices.  Move (O) is the only move we can perform at a singular vertex, and Move~(O) may be performed at a regular vertex or an infinite emitter, but not at a sink.  When Move~(O) is performed at an infinite emitter, only one of the sets in the partitioned outgoing edges is allowed to be infinite, so the number of infinite emitters in the graph stays the same.  As a result, we see that each of the moves (S), (O), (I), and (R) preserves the number of singular vertices in a graph.
\end{remark}

\begin{corollary} \label{moves-preserve-sing-cor}
If $E$ and $F$ are graphs with a finite number of vertices and $E \sim_M F$, then $|E^0_\textnormal{sing}| = |F^0_\textnormal{sing}|$.
\end{corollary}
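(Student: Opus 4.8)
The plan is to reduce the statement to the case of a single move and then induct along the chain witnessing $E \sim_M F$. The essential input is Remark~\ref{move-effects-rem}, which records that each of the four moves preserves the number of singular vertices; the work that remains is to check that the singular counts stay finite and well-defined at every stage of the move sequence and to assemble these facts into the claim. First I would verify that each move, applied to a graph with finitely many vertices, again yields a graph with finitely many vertices: Moves (S) and (R) delete a single vertex, so $|E_S^0| = |E^0|-1$ and $|E_R^0| = |E^0|-1$, while Moves (O) and (I) replace the vertex $w$ by the $n$ vertices $w^1, \dots, w^n$, so $|E_O^0| = |E^0| + n - 1$ and $|E_I^0| = |E^0| + n - 1$. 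In all four cases finiteness of $E^0$ passes to the resulting graph.

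Next I would combine this with the singular-count assertion of Remark~\ref{move-effects-rem}. The underlying reason, which I would state briefly, is that Moves (S), (I), and (R) are performed only at regular vertices and affect the out-degree of no other vertex, so they neither delete nor create a singular vertex; and Move (O), although it may be performed at an infinite emitter, partitions the outgoing edges so that at most one piece is infinite, whence exactly one of $w^1,\dots,w^n$ is an infinite emitter when $w$ is, and none is a sink since $w$ is a non-sink. Together with the previous paragraph, this yields the key local fact: whenever a finite-vertex graph $F'$ is obtained from a finite-vertex graph $E'$ by a single one of the moves (S), (O), (I), (R), both graphs have finitely many vertices and $|(E')^0_\textnormal{sing}| = |(F')^0_\textnormal{sing}|$.

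With this in hand I would run the induction. By Definition~\ref{move-equivalent-def}, $E \sim_M F$ furnishes a finite chain $E = E_0, E_1, \dots, E_n = F$ in which, for each $i$, one of $E_i, E_{i+1}$ is obtained from the other by a single move. Starting from the hypothesis that $E_0 = E$ has finitely many vertices, I would argue by induction on $i$ that every $E_i$ has finitely many vertices and that $|(E_i)^0_\textnormal{sing}| = |(E_0)^0_\textnormal{sing}|$. The point requiring care — and essentially the only content beyond Remark~\ref{move-effects-rem} — is the symmetry of $\sim_M$: the chain may traverse a move in either direction, so I must know that a move relates two graphs on which the singular counts are equal \emph{and} on which finiteness of the vertex set holds, regardless of which graph is obtained from which. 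This is exactly the local fact of the previous paragraph, phrased symmetrically, so the inductive hypothesis is never vacuous and propagates cleanly. Taking $i = n$ then gives $|E^0_\textnormal{sing}| = |F^0_\textnormal{sing}|$. I do not anticipate any serious obstacle here; once the two invariances (of finiteness and of the singular count) are recorded for a single move in both directions, the corollary is pure bookkeeping.
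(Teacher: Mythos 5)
Your proof is correct and follows exactly the route the paper intends: the corollary is stated as an immediate consequence of Remark~\ref{move-effects-rem}, and your argument simply makes explicit the single-move invariance of the singular count (in both directions) and the induction along the chain from Definition~\ref{move-equivalent-def}. The additional bookkeeping about finiteness of the vertex sets is harmless but not needed, since the singular counts are preserved move by move regardless.
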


\section{Simple Graphs} \label{simple-graph-sec}

If $\alpha = e_1e_2 \ldots e_n$ is a cycle, an \emph{exit} for $\alpha$ is an edge $f \in E^1$ such that $s(f) = s(e_i)$ and $f \neq e_i$ for some $i$.  A graph is said to satisfy \emph{Condition~(L)} if every cycle in the graph has an exit.  An \emph{infinite path} in a graph $E$ is an infinite  sequence of edge $\mu := e_1 e_2 \ldots$ with $r(e_i) = s(e_{i+1})$ for all $i \in \N$.  A graph $E$ is called \emph{cofinal} if whenever $\mu := e_1 e_2 \ldots$ is an infinite path in $E$ and $v \in E^0$, then there exists a finite path $\alpha \in E^*$ with $s(\alpha) = v$ and $r(\alpha) = s(e_i)$ for some $i \in \N$.

\begin{definition} \label{simple-graphs-def}
We say that a graph $E$ is \emph{simple} if $E$ satisfies the following three conditions:
\begin{itemize}
\item[(1)] $E$ is cofinal, 
\item[(2)] $E$ satisfies Condition~(L), and 
\item[(3)] whenever $v \in E^0$ and $w \in E^0_\textnormal{sing}$ ,there exists a path $\alpha \in E^*$ with $s(\alpha) = v$ and $r(\alpha) = w$.  
\end{itemize}
We let $\mathcal{S}$ denote the collection of (isomorphism classes) of simple graphs.
\end{definition}

\begin{proposition}
If $E$ is a graph, then the following are equivalent:
\begin{itemize}
\item[(1)] The graph $E$ is simple.
\item[(2)] The Leavitt path algebra $L_K(E)$ is simple for any field $K$.
\item[(3)] The graph $C^*$-algebra $C^*(E)$ is simple.
\end{itemize}
In addition, the collection $\mathcal{S}$ of all simple graphs is closed under the moves (S), (O), (I), (R), and their inverses.
\end{proposition}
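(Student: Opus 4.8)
The plan is to establish the three-way equivalence by reducing conditions (2) and (3) to a single graph-theoretic statement, and then to prove the closure assertion by combining the first part with the Morita-equivalence results of Section~\ref{moves-sec}. For the equivalence of (1), (2), and (3), I would invoke the known characterizations of simplicity, valid for arbitrary graphs: $L_K(E)$ is simple if and only if $E$ satisfies Condition~(L) and the only saturated hereditary subsets of $E^0$ are $\emptyset$ and $E^0$, and the same two graph conditions characterize simplicity of $C^*(E)$. Granting these, (2) and (3) are both equivalent to the single statement ``$E$ satisfies Condition~(L) and has only trivial saturated hereditary subsets,'' so the first part of the proposition reduces to matching this statement with Definition~\ref{simple-graphs-def}. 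Since Condition~(L) appears on both sides, the crux is the purely combinatorial equivalence: $E$ is cofinal and satisfies Definition~\ref{simple-graphs-def}(3) if and only if the only saturated hereditary subsets of $E^0$ are $\emptyset$ and $E^0$.

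I would prove this combinatorial equivalence by contraposition in each direction. For the forward direction, suppose $H$ is a saturated hereditary set with $\emptyset \neq H \neq E^0$, and choose $h \in H$ and $v_0 \notin H$. Following edges out of $v_0$ and using saturation at each regular vertex (a regular vertex not in $H$ cannot have all of its edge-ranges inside $H$, so it emits an edge to a vertex again outside $H$), I would produce either a singular vertex $w \notin H$ or an infinite path lying entirely outside $H$. In the first case Definition~\ref{simple-graphs-def}(3) gives a path from $h$ to $w$; in the second case cofinality gives a path from $h$ to a vertex on the infinite path; and in both cases heredity forces the terminal vertex into $H$, a contradiction. For the converse, if (3) fails at some $(v,w)$ with $w$ singular, then $\{u \in E^0 : \text{there is no path from } u \text{ to } w\}$ is readily checked to be saturated and hereditary, and it contains $v$ but not $w$, hence is nontrivial; and if cofinality fails for a vertex $v$ and an infinite path $\mu$, then $\{u \in E^0 : u \text{ reaches no vertex on } \mu\}$ is a nontrivial saturated hereditary set. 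The one point requiring care is the presence of infinite emitters: I would note that $\emptyset$ has no breaking vertices (an infinite emitter emits infinitely many edges into $E^0 \setminus \emptyset = E^0$), so the ideal-theoretic subtleties distinguishing the non-row-finite case do not arise for the trivial saturated hereditary subsets, and the cited simplicity characterizations apply verbatim.

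For the second part, I would show that $\mathcal{S}$ is closed under the four moves and their inverses using only the first part together with Section~\ref{moves-sec}. By the equivalence (1)$\Leftrightarrow$(2), a graph lies in $\mathcal{S}$ exactly when its Leavitt path algebra is simple. Each of the moves (S), (O), (I), (R) produces a graph whose Leavitt path algebra is Morita equivalent to the original (Theorem~\ref{moves-imply-concrete-ME-thm}), and simplicity of a Leavitt path algebra---a ring with local units---is preserved under Morita equivalence, since such an equivalence induces an isomorphism of the lattices of two-sided ideals. Hence if $E \in \mathcal{S}$ and $F$ is obtained from $E$ by one of the moves, then $L_K(F)$ is Morita equivalent to the simple algebra $L_K(E)$ and is therefore simple, so $F \in \mathcal{S}$. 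Because the relation ``related by one of the moves or its inverse'' is symmetric and Morita equivalence is symmetric, the same argument covers the inverse moves, yielding closure of $\mathcal{S}$ under all of (S), (O), (I), (R), and their inverses.

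I expect the main obstacle to be the combinatorial equivalence of the second paragraph---in particular, carrying out the ``follow edges out of $v_0$'' construction correctly in the presence of infinite emitters, and verifying saturation of the two sets used in the converse---rather than the invocation of the standard simplicity theorems or the Morita-invariance argument, both of which are routine once the graph condition has been identified.
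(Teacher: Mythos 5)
Your proposal is correct and follows essentially the same route as the paper: the paper disposes of the equivalence of (1), (2), (3) by citing the known simplicity characterizations in \cite{Tom10} and \cite{Szy4}, and establishes closure of $\mathcal{S}$ exactly as you do, via Morita invariance of simplicity together with Propositions~\ref{Move-S-ME-prop}--\ref{Move-R-ME-prop} (noting, as an alternative, the direct verification that each move preserves cofinality, Condition~(L), and reachability of singular vertices). The only difference is that you unpack the cited results by proving the combinatorial equivalence between Definition~\ref{simple-graphs-def} and the triviality of saturated hereditary subsets, including the correct observation that $\emptyset$ admits no breaking vertices so the non-row-finite ideal theory causes no trouble; this argument is sound and is precisely the content of the propositions the paper cites.
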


\begin{proof}
The equivalence of (1), (2), and (3) is well known.  The equivalence of (1) and (2) is proven in \cite[Proposition~6.18]{Tom10} and \cite[Proposition~6.20]{Tom10}.  The equivalence of (1) and (3) follows from \cite[Theorem~12]{Szy4} and \cite[Proposition~6.20]{Tom10}. 

To establish the final claim, we observe that if $E \in \mathcal{S}$, then $L_K(E)$ is simple.  It follows from Proposition~\ref{Move-S-ME-prop}, Proposition~\ref{Move-O-ME-prop}, Proposition~\ref{Move-I-ME-prop}, and Proposition~\ref{Move-R-ME-prop} that the moves (S), (O), (I), (R), and their inverses preserve Morita equivalence of the Leavitt path algebra and hence produce simple graphs.  Thus $\mathcal{S}$ is closed under the moves (S), (O), (I), (R), and their inverses.  (Alternatively, one can verify directly from the definitions that each of the moves (S), (O), (I), (R), and their inverses preserve cofinality, Condition~(L), and reachability of singular vertices.)
\end{proof}

\begin{proposition} \label{inf-emitter-implies-pi-prop}
If $E$ is a simple graph and $E$ contains an infinite emitter, then $E$ contains a cycle, the graph $C^*$-algebra $C^*(E)$ is purely infinite, and the Leavitt path algebra $L_K(E)$ is purely infinite for any field $K$.
\end{proposition}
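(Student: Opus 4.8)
The plan is to establish the three conclusions in the order stated, the crux being the production of a cycle; once a cycle is found, both pure-infiniteness statements follow from the standard dichotomy for simple graph algebras.

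First I would show that $E$ contains a cycle. Let $v \in E^0$ be the infinite emitter. Since $v$ is not a sink, it emits at least one edge $e \in s^{-1}(v)$. Because $v$ is a singular vertex, Condition~(3) of Definition~\ref{simple-graphs-def}, applied to the vertex $r(e)$ and the singular vertex $v$, produces a path $\beta \in E^*$ with $s(\beta) = r(e)$ and $r(\beta) = v$. Then $e\beta$ is a path with $s(e\beta) = s(e) = v = r(\beta) = r(e\beta)$ and $|e\beta| = 1 + |\beta| \geq 1$, so $e\beta$ is a cycle. This works whether or not $r(e) = v$: if $r(e) = v$ then $\beta$ is the trivial path and $e$ itself is a loop, and otherwise $\beta$ has positive length; in either case prepending $e$ guarantees positive length. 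Hence $E$ contains a cycle, which is the first assertion.

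Next I would deduce pure infiniteness from the dichotomy for simple graph algebras. By the preceding proposition, simplicity of $E$ implies that $L_K(E)$ is simple for every field $K$ and that $C^*(E)$ is simple. For simple graph algebras there is a well-known dichotomy (for Leavitt path algebras due originally to Abrams and Aranda Pino, and for graph $C^*$-algebras classical): a simple Leavitt path algebra is either locally matricial or purely infinite, and it is purely infinite precisely when its graph contains a cycle, while a simple graph $C^*$-algebra is either AF or purely infinite, and it is purely infinite precisely when its graph contains a cycle. To apply these criteria cleanly I would note that cofinality automatically supplies the required connectivity: repeating the cycle $e\beta$ yields an infinite path, and cofinality provides, for each $w \in E^0$, a finite path from $w$ to a vertex lying on that infinite path, hence to a vertex on the cycle. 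Thus every vertex connects to a cycle, and since $E$ contains a cycle we conclude that $C^*(E)$ is purely infinite and that $L_K(E)$ is purely infinite for every field $K$.

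I do not expect a serious obstacle here. The only point requiring care is confirming that the closed path $e\beta$ genuinely has positive length, which is handled by prepending the edge $e$ so that the trivial-path case $r(e) = v$ causes no difficulty, and ensuring that the connectivity hypothesis in the pure-infiniteness criterion is met, which is supplied by cofinality rather than checked by hand. The remaining content is a direct appeal to the known classification of simple graph algebras into the matricial/AF case and the purely infinite case.
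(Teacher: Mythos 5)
Your proof is correct and follows essentially the same route as the paper: pick an edge $e$ out of the infinite emitter $v$, use the reachability-of-singular-vertices condition to find a path from $r(e)$ back to $v$, obtain a cycle, and then invoke the dichotomy for simple graph $C^*$-algebras and simple Leavitt path algebras. The extra care you take about the trivial-path case and about cofinality supplying the connectivity hypothesis is sound but not needed beyond what the paper's one-line argument already covers.
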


\begin{proof}
Let $E$ be a simple graph, and suppose that $v \in E^0$ is an infinite emitter.  Choose $e \in E^1$ with $s(e) = v$.  Since $E$ is simple, there is a path from $r(e)$ to the infinite emitter $v = s(e)$.  Hence $E$ contains a cycle.  It follows from the dichotomy for simple graph $C^*$-algebras that $C^*(E)$ is purely infinite, and it follows from the dichotomy for Leavitt path algebras that $L_K(E)$ is purely infinite for any field $K$.
\end{proof}

\section{Computing the algebraic $K$-theory of Leavitt path algebras and the topological $K$-theory of graph $C^*$-algebras} \label{compute-K-sec}

In this section we discuss methods for computing the algebraic $K$-theory of a Leavitt path algebra and the topological $K$-theory of a graph $C^*$-algebra.  We also make some observation about these computations in the case when the graph has a finite number of vertices (or, equivalently, when the Leavitt path algebras and the graph $C^*$-algebra are unital).

Throughout this section, whenever $K$ is a field we let $K^\times := K \setminus \{ 0 \}$ denote the units of $K$.  We often view the set of units as an abelian group $(K^\times, \cdot)$ under the operation of the field multiplication.  In addition, for any group $G$ and any cardinal number $n$, we let $G^n$ denote the direct sum of $n$ copies of $G$.  Note that $n$ is allowed to be infinite.

\begin{proposition} \label{K-theory-comp-prop}
Let $E$ be a graph and decompose the vertices of $E$ as $E^0 = E^0_\textnormal{reg} \sqcup E^0_\textnormal{sing}$, and with respect to this decomposition write the vertex matrix of $E$ as $$A_E = \begin{pmatrix} B_E & C_E \\ * & * \end{pmatrix}$$ where $B_E$ and $C_E$ have entries in $\Z$ and the $*$'s have entries in $\Z \cup \{ \infty \}$.  For each $v \in E^0$, let $\delta_v \in \Z^{E^0}$ denote the vector with $1$ in the $v$\textsuperscript{th} position and $0$'s elsewhere, and for $x \in \Z^{E^0}$ let $[x]$ denote the equivalence class of $x$ in $\coker \left( \begin{pmatrix} B_E^t-I \\ C_E^t\end{pmatrix} : \Z^{E^0_\textnormal{reg}} \to \Z^{E^0} \right)$.
\begin{itemize}
\item[(a)]  The topological $K$-theory of the graph $C^*$-algebra may be calculated as follows:  We have
$$K_0^\textnormal{top} (C^*(E)) \cong \coker \left( \begin{pmatrix} B_E^t-I \\ C_E^t\end{pmatrix} : \Z^{E^0_\textnormal{reg}} \to \Z^{E^0} \right)$$ via an isomorphism that takes $[p_v]_0 \mapsto [\delta_v]$ and takes the positive cone of  $K_0^\textnormal{top} (C^*(E))$ to the cone of $\coker \left( \begin{pmatrix} B_E^t-I \\ C_E^t\end{pmatrix} : \Z^{E^0_\textnormal{reg}} \to \Z^{E^0} \right)$ generated by the elements
$$
\{ [ p_v - \sum_{e \in F} s_es_e^*] : v \in E^0, F \subseteq s^{-1}(v), \text{ and $F$ finite} \},$$ and we have $$K_1^\textnormal{top} (C^*(E)) \cong \ker \left( \begin{pmatrix} B_E^t-I \\ C_E^t\end{pmatrix} : \Z^{E^0_\textnormal{reg}} \to \Z^{E^0} \right).$$ 

\item[(b)] If $K$ is any field, then the algebraic $K$-theory of the Leavitt path algebra $L_K(E)$ may be calculated as follows:  We have
$$K_0^\textnormal{alg} (L_K(E)) \cong \coker \left( \begin{pmatrix} B_E^t-I \\ C_E^t\end{pmatrix} : \Z^{E^0_\textnormal{reg}} \to \Z^{E^0} \right)$$ via an isomorphism that takes $[v]_0 \mapsto [\delta_v]$ and takes the positive cone of  $K_0^\textnormal{alg} (L_{K}(E))$ to the cone of $\coker \left( \begin{pmatrix} B_E^t-I \\ C_E^t\end{pmatrix} : \Z^{E^0_\textnormal{reg}} \to \Z^{E^0} \right)$ generated by the elements
$$
\{ [ v - \sum_{e \in F} e e^*] : v \in E^0, F \subseteq s^{-1}(v), \text{ and $F$ finite} \},$$ and we have 
\begin{align*}
K_1^\textnormal{alg} (L_K(E)) \cong \ker \bigg( &\begin{pmatrix} B_E^t-I \\ C_E^t\end{pmatrix} : \Z^{E^0_\textnormal{reg}} \to \Z^{E^0} \bigg) \\
& \oplus  \coker \left( \begin{pmatrix} B_E^t-I \\ C_E^t\end{pmatrix} : (K_1^\textnormal{alg}(K))^{E^0_\textnormal{reg}} \to (K_1^\textnormal{alg} (K))^{E^0} \right)
\end{align*}
with $(K_1^\textnormal{alg}(K), +) \cong (K^\times, \cdot)$.
Moreover, there is a long exact sequence
$$\xymatrix{ K_n^\textnormal{alg}(K)^{E^0_\textnormal{reg}} \ar[r]^{ \left( \begin{smallmatrix} B_E^t-I \\ C_E^t\end{smallmatrix} \right)} & K_n^\textnormal{alg}(K)^{E^0} \ar[r] & K_n^\textnormal{alg}(L_K(E)) \ar[r] & K_{n-1}^\textnormal{alg}(K)^{E^0_\textnormal{reg}}}
$$
for $n \in \Z$.
\end{itemize}
\end{proposition}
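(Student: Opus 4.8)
The plan is to derive both parts from established $K$-theory computations for graph algebras, arranging the input so that the relevant boundary map acquires the stated block form. The conceptual source of the matrix $\left( \begin{smallmatrix} B_E^t - I \\ C_E^t \end{smallmatrix} \right)$ is the defining relation $v = \sum_{\{e : s(e) = v\}} ee^*$ (respectively $p_v = \sum_{\{e : s(e)=v\}} s_e s_e^*$), which holds precisely at regular vertices: passing to $K_0$ and using $[ee^*] = [r(e)]$ forces $[v] = \sum_w A_E(v,w)[w]$ for each $v \in E^0_\reg$. Reading these relations off in the basis $\{[\delta_v] : v \in E^0\}$ presents $K_0$ as the cokernel of a map out of $\Z^{E^0_\reg}$ (one generator per regular vertex), and restricting $A_E^t - I$ to the regular columns produces $\left( \begin{smallmatrix} B_E^t - I \\ C_E^t \end{smallmatrix} \right)$, with the $B_E, C_E$ blocks recording regular-to-regular and regular-to-singular transitions while the $*$-rows (which may contain $\infty$) never enter the domain. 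Since kernels and cokernels are unchanged under negating the map, the choice between $A_E^t - I$ and $I - A_E^t$ is immaterial for the isomorphism type.

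For part (a) I would invoke the standard six-term exact sequence for the $K$-theory of a graph $C^*$-algebra, which for this boundary map (and because the relevant coefficient algebra has trivial $K_1$) reduces to
$$0 \to K_1^\textnormal{top}(C^*(E)) \to \Z^{E^0_\reg} \xrightarrow{\left( \begin{smallmatrix} B_E^t - I \\ C_E^t \end{smallmatrix} \right)} \Z^{E^0} \to K_0^\textnormal{top}(C^*(E)) \to 0,$$
yielding $K_0^\textnormal{top}$ as the cokernel and $K_1^\textnormal{top}$ as the kernel, with the identification $[p_v]_0 \mapsto [\delta_v]$ built in. To pin down the positive cone I would pass to the nonstable picture: every projection in a matrix algebra over $C^*(E)$ is Murray--von Neumann equivalent to a finite orthogonal sum of projections of the form $p_v - \sum_{e \in F} s_e s_e^*$ with $F \subseteq s^{-1}(v)$ finite, so the classes of these exhaust the positive cone, which then transports to the cone described in the statement.

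For part (b) the engine is the long exact sequence of Ara, Brustenga, and Corti\~nas \cite{ABC},
$$\cdots \to K_n^\textnormal{alg}(K)^{E^0_\reg} \xrightarrow{\left( \begin{smallmatrix} B_E^t - I \\ C_E^t \end{smallmatrix} \right)} K_n^\textnormal{alg}(K)^{E^0} \to K_n^\textnormal{alg}(L_K(E)) \to K_{n-1}^\textnormal{alg}(K)^{E^0_\reg} \to \cdots,$$
valid for all $n \in \Z$. Because a field is regular, $K_m^\textnormal{alg}(K) = 0$ for every $m < 0$; together with $K_0^\textnormal{alg}(K) \cong \Z$ this truncates the sequence at the bottom and gives $K_0^\textnormal{alg}(L_K(E)) \cong \coker \left( \begin{smallmatrix} B_E^t - I \\ C_E^t \end{smallmatrix} \right)$, with $[v]_0 \mapsto [\delta_v]$ and with the positive cone handled as in part (a), now via the graph-monoid (nonstable $K_0$) computation for Leavitt path algebras. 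For $K_1$, exactness produces the short exact sequence
$$0 \to \coker\!\left( K_1^\textnormal{alg}(K)^{E^0_\reg} \to K_1^\textnormal{alg}(K)^{E^0} \right) \to K_1^\textnormal{alg}(L_K(E)) \to \ker\!\left( \Z^{E^0_\reg} \to \Z^{E^0} \right) \to 0,$$
and since the quotient is a subgroup of the free abelian group $\Z^{E^0_\reg}$, hence itself free, the sequence splits and yields the displayed direct-sum decomposition; finally $K_1^\textnormal{alg}(K) \cong K^\times$ by the determinant isomorphism for $K_1$ of a field.

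The routine bookkeeping with signs, transposes, and the exact shape of the block decomposition is where care is needed, but I expect the genuine content to lie in the order-theoretic assertions: matching the positive cones exactly requires the nonstable $K$-theory descriptions (Murray--von Neumann equivalence of projections in the $C^*$ case and the graph-monoid description of the $V$-monoid of $L_K(E)$ in the algebraic case) rather than the exact sequences alone, so verifying that the generators $[v - \sum_{e \in F} ee^*]$ and their $C^*$-analogues genuinely exhaust the cone is the step demanding the most attention.
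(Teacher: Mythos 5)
Your overall strategy --- reading both parts off the known $K$-theoretic machinery for graph algebras, truncating the exact sequences using $K_{-1}^\textnormal{alg}(K)=0$ and $K_0^\textnormal{alg}(K)\cong\Z$, splitting the $K_1$ extension because $\ker\left(\begin{smallmatrix} B_E^t-I \\ C_E^t\end{smallmatrix}\right)$ is a subgroup of a free abelian group and hence free, and handling the positive cones separately via nonstable $K$-theory (Murray--von Neumann classes of $p_v-\sum_{e\in F}s_es_e^*$ in the $C^*$ case, the graph monoid in the algebraic case) --- is exactly the route the paper takes, and all of that bookkeeping is correct.

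The one genuine gap is in part (b): you apply the long exact sequence of \cite{ABC} verbatim to an arbitrary graph, but their Theorem~7.6 and Corollary~7.7 are established for \emph{row-finite} graphs, whereas the graphs this proposition must cover have infinite emitters (indeed the paper's main theorems concern graphs with finitely many vertices and infinitely many edges, which necessarily contain infinite emitters). The precise block form of the boundary map --- domain indexed only by $E^0_\textnormal{reg}$, with the $\infty$-containing rows discarded --- is exactly the content requiring justification when $E$ is not row-finite; the relation $v=\sum_{s(e)=v}ee^*$ that you use to motivate it simply does not hold at an infinite emitter, as you yourself note. The paper closes this gap by desingularizing $E$ as in \cite{DT1}, observing via \cite[Theorem~5.2]{AbrPino3} that desingularization preserves Morita equivalence of the Leavitt path algebras (hence the $K$-groups), applying the row-finite computation to the desingularized graph, and translating back to the stated formula for $E$ using \cite[Lemma~2.3]{DT2}; the positive-cone claim for $K_0^\textnormal{alg}$ in the non-row-finite setting is taken from \cite[Theorem~4.9]{HLMRT}. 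Without some such reduction, your argument for part (b) proves only the row-finite case. (The analogous point arises for your six-term sequence in part (a), but there the extension to arbitrary graphs is standard and is precisely what \cite[Proposition~3.8]{CET} supplies.)
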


\begin{proof}
The result of Part~(a) follows from \cite[Proposition~3.8]{CET}.  For Part~(b), when $E$ is row-finite the computation of $K_0^\textnormal{alg} (L_K(E))$ and $K_1^\textnormal{alg} (L_K(E))$ is established in \cite[Corollary~7.7]{ABC}, and the long exact sequence follows from \cite[Theorem~7.6]{ABC}.  When $E$ is not row-finite, we apply the desingularization construction introduced in \cite{DT1}.  It follows from \cite[Theorem~5.2]{AbrPino3} that the desingularization preserves the Morita equivalence of the associated Leavitt path algebras, and hence does not change the isomorphism classes of the $K_0^\textnormal{alg}$-group and $K_1^\textnormal{alg}$-group.  The computation above then follows from the row-finite case and \cite[Lemma~2.3]{DT2}.  Moreover, the claim about the positive cone of $K_0^\textnormal{alg}$ is established in \cite[Theorem~4.9]{HLMRT}.  Also, since $K$ is a field, we have $(K_1^\textnormal{alg}(K), +) \cong (K^\times, \cdot)$.
\end{proof}

\begin{remark}\label{r:K-theory-comp-prop}
When we write $K_1^\textnormal{alg}(K)$ additively, the matrix multiplication $\begin{pmatrix} B_E^t-I \\ C_E^t\end{pmatrix} : (K_1^\textnormal{alg}(K))^{E^0_\textnormal{reg}} \to (K_1^\textnormal{alg} (K))^{E^0}$ is the usual matrix multiplication.  If we make the identification $(K_1^\textnormal{alg}(K), +) \cong (K^\times, \cdot)$, we shall then write $\begin{pmatrix} B_E^t-I \\ C_E^t\end{pmatrix} : (K^\times)^{E^0_\textnormal{reg}} \to (K^\times)^{E^0}$, however, it is important the reader keep in mind that the group operation on $K^\times$ is written multiplicatively, not additively.  For example, if we write $\begin{pmatrix} a & b \\ c & d \end{pmatrix} : (K^\times)^2 \to (K^\times)^2$, then this map takes $\begin{pmatrix}x \\ y \end{pmatrix} \in (K^\times)^2$ to the element $\begin{pmatrix} x^ay^b \\ x^cy^c \end{pmatrix} \in (K^\times)^2$.
\end{remark}

Using this $K$-theory computation, we can derive a number of corollaries regarding graphs with finitely many vertices.

\begin{corollary} \label{K-theory-for-finite-vertices-cor}
If $E$ is a graph with finitely many vertices, then there exists $d_1, \ldots, d_k \in \{2, 3, 4, \ldots \}$ and $m \in \N \cup \{ 0 \}$ such that $d_i \, | \, d_{i+1}$ for all $1 \leq i \leq k-1$, the topological $K$-theory of $C^*(E)$ is given by
\begin{align*}
K_0^\textnormal{top} (C^*(E)) &\cong \Z_{d_1} \oplus \ldots \oplus \Z_{d_k} \oplus \Z^{m + |E^0_\textnormal{sing}|} \\
K_1^\textnormal{top} (C^*(E)) &\cong \Z^m,
\end{align*}
and for any field $K$ the algebraic $K$-theory of $L_K(E)$ is given by
\begin{align*}
K_0^\textnormal{alg} (L_K(E)) &\cong \Z_{d_1} \oplus \ldots \oplus \Z_{d_k} \oplus \Z^{m + |E^0_\textnormal{sing}|} \\
K_1^\textnormal{alg} (L_K(E)) &\cong \Z^m \oplus K^{\times} / \langle x^{ d_{1} } : x \in K^{\times} \rangle \oplus \ldots \\
&\qquad \qquad \qquad \ldots \oplus K^{\times} / \langle x^{ d_{k} } : x \in K^{\times} \rangle \oplus (K^\times)^{m + |E^0_\textnormal{sing}|}.
\end{align*}
\end{corollary}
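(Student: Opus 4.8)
The plan is to reduce everything to the structure theory of finitely generated abelian groups applied to the single integer matrix
$$M := \begin{pmatrix} B_E^t - I \\ C_E^t \end{pmatrix} : \Z^{E^0_\textnormal{reg}} \to \Z^{E^0},$$
which is a genuine finite matrix because $E^0$, and hence also $E^0_\textnormal{reg} \subseteq E^0$, is finite. By Proposition~\ref{K-theory-comp-prop}, all four groups in the statement are assembled from the kernel and cokernel of $M$ (with $\Z$ coefficients for $K_0^\textnormal{top}$, $K_1^\textnormal{top}$, and $K_0^\textnormal{alg}$, and additionally with $K^\times$ coefficients for the second summand of $K_1^\textnormal{alg}$), so it suffices to compute these and read off the answers.

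First I would put $M$ into Smith normal form: choose $U \in \mathrm{GL}_{|E^0|}(\Z)$ and $V \in \mathrm{GL}_{|E^0_\textnormal{reg}|}(\Z)$ with $UMV = D$, where $D$ is the rectangular diagonal matrix whose diagonal entries are the invariant factors $s_1 \mid s_2 \mid \cdots \mid s_r$, with $r = |E^0_\textnormal{reg}|$ and entries allowed to equal $1$ or $0$. Ordered by divisibility, these entries form a block of $1$'s, then the entries that are $\geq 2$, then a block of $0$'s. I would then \emph{define} $d_1, \ldots, d_k$ to be the invariant factors that are $\geq 2$ (so $d_i \mid d_{i+1}$ holds automatically, giving the required divisibility) and set $m$ to be the number of zero invariant factors, i.e. $m = r - \rank M$. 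Since $U$ and $V$ are isomorphisms we get $\ker M \cong \ker D \cong \Z^m$ and $\coker M \cong \coker D \cong \Z_{d_1} \oplus \cdots \oplus \Z_{d_k} \oplus \Z^{m + |E^0_\textnormal{sing}|}$; here I use $|E^0_\textnormal{sing}| = |E^0| - r$, together with the fact that each zero invariant factor contributes a $\Z$ to the cokernel while the $|E^0| - r$ zero rows of $D$ contribute a further $\Z^{|E^0| - r}$. Feeding these into Proposition~\ref{K-theory-comp-prop}(a),(b) yields the asserted formulas for $K_0^\textnormal{top}$, $K_1^\textnormal{top}$, and $K_0^\textnormal{alg}$.

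For $K_1^\textnormal{alg}(L_K(E))$ the first summand is $\ker M \cong \Z^m$ as above, and the work is in the second summand $\coker\big(M : (K^\times)^{E^0_\textnormal{reg}} \to (K^\times)^{E^0}\big)$. The key observation is that an integer matrix acts on $G^n$ for \emph{any} abelian group $G$ via the $\Z$-module structure of $G$, and that a matrix in $\mathrm{GL}(\Z)$ therefore induces an automorphism of $G^n$; applying this with $G = K^\times$ and the \emph{same} $U, V$ as above, the relation $UMV = D$ gives $\coker\big(M : (K^\times)^{E^0_\textnormal{reg}} \to (K^\times)^{E^0}\big) \cong \coker\big(D : (K^\times)^{E^0_\textnormal{reg}} \to (K^\times)^{E^0}\big)$. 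Because $D$ is diagonal and, in the multiplicative notation of Remark~\ref{r:K-theory-comp-prop}, the integer $s$ acts on $K^\times$ as $x \mapsto x^s$, the latter cokernel splits as $\bigoplus_{i=1}^r K^\times / \langle x^{s_i} : x \in K^\times \rangle \oplus (K^\times)^{|E^0| - r}$. I would then evaluate each factor by cases on $s_i$: an invariant factor $s_i = 1$ gives $K^\times / \langle x : x \in K^\times \rangle = 0$; an invariant factor $s_i = d_j \geq 2$ gives the term $K^\times / \langle x^{d_j} : x \in K^\times \rangle$; and each of the $m$ zero invariant factors gives $K^\times / \langle 1 \rangle = K^\times$. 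Combining the $m$ copies of $K^\times$ from the zero factors with the $(K^\times)^{|E^0| - r} = (K^\times)^{|E^0_\textnormal{sing}|}$ from the empty rows produces $(K^\times)^{m + |E^0_\textnormal{sing}|}$, giving exactly the stated expression.

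The only place demanding genuine care — and the main obstacle — is the passage from $\Z$-coefficients to $K^\times$-coefficients in the $K_1^\textnormal{alg}$ computation: one must verify that the single pair of base-change matrices $U, V$ diagonalizing $M$ over $\Z$ simultaneously diagonalizes the induced map on $(K^\times)$-coefficients. This is precisely the naturality of the $\mathrm{GL}(\Z)$-action across the functor $G \mapsto G^{(-)}$, so once it is stated cleanly the rest is a bookkeeping exercise in reading off the diagonal. I would also record explicitly that, since $K^\times$ is abelian, the subgroup $\langle x^{s} : x \in K^\times \rangle$ coincides with the set of $s$-th powers, so there is no need to track any distinction between the generated subgroup and the set of powers.
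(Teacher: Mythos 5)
Your proposal is correct and follows essentially the same route as the paper: both reduce to the Smith normal form of $\left( \begin{smallmatrix} B_E^t-I \\ C_E^t \end{smallmatrix} \right)$, read off $d_1,\ldots,d_k$ and $m$ from the invariant factors, and feed the resulting kernel and cokernel computations (over $\Z$ and over $K^\times$) into Proposition~\ref{K-theory-comp-prop}. Your explicit justification that the base-change matrices $U,V \in \mathrm{GL}(\Z)$ also diagonalize the induced map on $(K^\times)$-coefficients is a point the paper leaves implicit, but it is the same argument.
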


\begin{proof}
Consider the matrix $\begin{pmatrix} B_E^t-I \\ C_E^t\end{pmatrix}$.  The Smith normal form over the integers for this matrix has the form $\begin{pmatrix} D \\ 0 \end{pmatrix}$ where $0$ is the $|E^0_\textnormal{sing}| \times |E^0_\textnormal{reg}|$ matrix of all $0$'s, and $D$ is the $|E^0_\textnormal{reg}| \times |E^0_\textnormal{reg}|$ diagonal matrix $$D = \operatorname{diag} ( 1, \ldots, 1, d_1, \ldots, d_k, 0, \ldots, 0 )$$ with
$n$ values of $1$ along the diagonal for some $n \in \N \cup \{ 0 \}$,  the values $d_1, \ldots, d_k$ are integers that are 2 or larger with $d_i \, | \, d_{i+1}$ for all $1 \leq i \leq k-1$, and $m$ values of $0$ appearing along the diagonal for some $m \in \N \cup \{ 0 \}$.

The topological $K$-theory of $C^*(E)$ is then given by
$$K_0^\textnormal{top} (C^*(E)) \cong \coker \left( \begin{pmatrix} D \\ 0 \end{pmatrix} : \Z^{|E^0_\reg|} \to \Z^{|E^0|} \right) \cong  \Z_{d_1} \oplus \ldots \oplus \Z_{d_k} \oplus \Z^{m + |E^0_\textnormal{sing}|}$$
and
$$K_1^\textnormal{top} (C^*(E)) \cong \ker \left( \begin{pmatrix} D \\ 0 \end{pmatrix} : \Z^{|E^0_\reg|} \to \Z^{|E^0|} \right) \cong   \Z^m.$$
For any field $K$, the algebraic $K$-theory of $L_K(E)$ is given by
$$K_0^\textnormal{alg} (L_K(E)) \cong \coker \left( \begin{pmatrix} D \\ 0 \end{pmatrix} : \Z^{|E^0_\reg|} \to \Z^{|E^0|} \right) \cong  \Z_{d_1} \oplus \ldots \oplus \Z_{d_k} \oplus \Z^{m + |E^0_\textnormal{sing}|}$$
and
\begin{align*}
K_1^\textnormal{alg} (L_{K}(E)) &\cong \ker \left( \begin{pmatrix} D \\ 0 \end{pmatrix} : \Z^{|E^0_\reg|} \to \Z^{|E^0|} \right) \\
& \qquad \qquad \qquad \oplus  \coker \left( \begin{pmatrix} D \\ 0 \end{pmatrix} : (K^\times)^{|E^0_\reg|} \to (K^\times)^{|E^0|} \right) \\\
&\cong \Z^m \oplus K^{\times} / \langle x^{ d_{1} } : x \in K^{\times} \rangle \oplus \ldots \\
&\qquad \qquad \qquad \ldots \oplus K^{\times} / \langle x^{ d_{k} } : x \in K^{\times} \rangle  \oplus (K^\times)^{m + |E^0_\textnormal{sing}|}.
\end{align*}
\end{proof}

\begin{corollary} \label{sing-vertices-SME-inv-cor}
If $E$ is a graph with a finite number of vertices, then $K_0(C^*(E))$ and $K_1(C^*(E))$ are finitely generated abelian groups, and $$|E^0_\textnormal{sing}| = \rank K_0^\textnormal{top}(C^*(E)) - \rank K_1^\textnormal{top}(C^*(E)).$$ 
Consequently, if $E$ and $F$ are graphs with a finite number of vertices, and if $C^*(E)$ is strongly Morita equivalent to $C^*(F)$, then $|E^0_\textnormal{sing}| = |F^0_\textnormal{sing}|$. 
\end{corollary}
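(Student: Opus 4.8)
The plan is to read off both $K$-theory groups from Corollary~\ref{K-theory-for-finite-vertices-cor} and compare their torsion-free ranks. First I would invoke Corollary~\ref{K-theory-for-finite-vertices-cor} to obtain integers $d_1,\ldots,d_k$ and $m$ with
$$K_0^\textnormal{top}(C^*(E)) \cong \Z_{d_1} \oplus \cdots \oplus \Z_{d_k} \oplus \Z^{m+|E^0_\textnormal{sing}|} \qquad \text{and} \qquad K_1^\textnormal{top}(C^*(E)) \cong \Z^m.$$
In particular both groups are finitely generated abelian, which gives the first assertion. Since the torsion-free rank ignores the finite cyclic summands, we have $\rank K_0^\textnormal{top}(C^*(E)) = m + |E^0_\textnormal{sing}|$ and $\rank K_1^\textnormal{top}(C^*(E)) = m$, and subtracting yields the displayed formula $|E^0_\textnormal{sing}| = \rank K_0^\textnormal{top}(C^*(E)) - \rank K_1^\textnormal{top}(C^*(E))$.

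For the final consequence, I would use that strong Morita equivalence of $C^*$-algebras preserves topological $K$-theory, so if $C^*(E)$ and $C^*(F)$ are strongly Morita equivalent then $K_n^\textnormal{top}(C^*(E)) \cong K_n^\textnormal{top}(C^*(F))$ for $n = 0,1$. Isomorphic finitely generated abelian groups have equal torsion-free rank, so the two rank differences computed above agree, and therefore $|E^0_\textnormal{sing}| = |F^0_\textnormal{sing}|$.

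I do not expect any genuine obstacle here: the entire content is packaged in Corollary~\ref{K-theory-for-finite-vertices-cor}, and the remaining step is the elementary observation that rank is a Morita-equivalence invariant of the $K$-groups via their isomorphism class. The only point requiring a word of care is that the $\Z$-rank is well defined for finitely generated abelian groups and is insensitive to torsion, which justifies discarding the $\Z_{d_i}$ summands when computing the difference.
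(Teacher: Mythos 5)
Your proposal is correct and follows exactly the route the paper intends: the corollary is stated without proof precisely because it is an immediate consequence of Corollary~\ref{K-theory-for-finite-vertices-cor}, whose decomposition gives $\rank K_0^\textnormal{top}(C^*(E)) = m + |E^0_\textnormal{sing}|$ and $\rank K_1^\textnormal{top}(C^*(E)) = m$, and the last assertion follows since strong Morita equivalence preserves topological $K$-theory and isomorphic finitely generated abelian groups have equal torsion-free rank. Nothing is missing.
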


\begin{corollary} \label{K0+K1-is-K1+sing-cor}
Let $E$ and $F$ be graphs that each have a finite number of vertices. The following are equivalent:
\begin{itemize}
\item[(1)] $K_0^\textnormal{top} (C^*(E)) \cong K_0^\textnormal{top} (C^*(F))$ and $K_1^\textnormal{top} (C^*(E)) \cong K_1^\textnormal{top} (C^*(F))$
\item[(2)] $K_0^\textnormal{top} (C^*(E)) \cong K_0^\textnormal{top} (C^*(F))$ and $|E^0_\textnormal{sing}| = |F^0_\textnormal{sing}|$. 
\end{itemize}
\end{corollary}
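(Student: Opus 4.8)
The plan is to reduce everything to the two preceding corollaries, since both conditions (1) and (2) share the common clause $K_0^\textnormal{top}(C^*(E)) \cong K_0^\textnormal{top}(C^*(F))$. Thus the real content is establishing, under this common hypothesis, the equivalence of $K_1^\textnormal{top}(C^*(E)) \cong K_1^\textnormal{top}(C^*(F))$ with $|E^0_\textnormal{sing}| = |F^0_\textnormal{sing}|$. The two facts I would use are: first, Corollary~\ref{K-theory-for-finite-vertices-cor}, which shows $K_1^\textnormal{top}(C^*(E)) \cong \Z^m$ is \emph{free abelian}, so that two such groups are isomorphic if and only if they have the same torsion-free rank; and second, Corollary~\ref{sing-vertices-SME-inv-cor}, which supplies the rank formula $|E^0_\textnormal{sing}| = \rank K_0^\textnormal{top}(C^*(E)) - \rank K_1^\textnormal{top}(C^*(E))$.

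For the direction (1) $\Rightarrow$ (2), I would observe that isomorphic groups have equal torsion-free rank, so the hypotheses give $\rank K_0^\textnormal{top}(C^*(E)) = \rank K_0^\textnormal{top}(C^*(F))$ and $\rank K_1^\textnormal{top}(C^*(E)) = \rank K_1^\textnormal{top}(C^*(F))$. Subtracting these and applying the rank formula of Corollary~\ref{sing-vertices-SME-inv-cor} to each graph yields $|E^0_\textnormal{sing}| = |F^0_\textnormal{sing}|$. Since the $K_0$-clause is carried over from (1) unchanged, condition (2) follows immediately.

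For the direction (2) $\Rightarrow$ (1), I would rearrange the rank formula as $\rank K_1^\textnormal{top}(C^*(E)) = \rank K_0^\textnormal{top}(C^*(E)) - |E^0_\textnormal{sing}|$. The hypothesis $K_0^\textnormal{top}(C^*(E)) \cong K_0^\textnormal{top}(C^*(F))$ gives equality of the $K_0$-ranks, and the hypothesis $|E^0_\textnormal{sing}| = |F^0_\textnormal{sing}|$ supplies the remaining term, so $\rank K_1^\textnormal{top}(C^*(E)) = \rank K_1^\textnormal{top}(C^*(F))$. Because each $K_1$-group is free abelian by Corollary~\ref{K-theory-for-finite-vertices-cor}, equality of ranks upgrades to an isomorphism $K_1^\textnormal{top}(C^*(E)) \cong K_1^\textnormal{top}(C^*(F))$, and condition (1) follows.

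There is no serious obstacle here; the argument is essentially bookkeeping with ranks. The only point that genuinely needs care is the implicit use of the fact that $K_1^\textnormal{top}(C^*(E))$ is free abelian, which is precisely what lets me pass from equality of ranks back to an isomorphism in the (2) $\Rightarrow$ (1) direction. For a general pair of finitely generated abelian groups, equal rank would not suffice to conclude isomorphism, but the explicit computation $K_1^\textnormal{top}(C^*(E)) \cong \Z^m$ from Corollary~\ref{K-theory-for-finite-vertices-cor} removes any torsion ambiguity and makes the rank a complete isomorphism invariant for these particular groups.
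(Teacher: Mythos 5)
Your argument is correct and is exactly the intended one: the paper leaves this corollary as an immediate consequence of Corollary~\ref{K-theory-for-finite-vertices-cor} (which shows $K_1^\textnormal{top}(C^*(E))\cong\Z^m$ is free abelian of finite rank) and the rank formula of Corollary~\ref{sing-vertices-SME-inv-cor}, which is precisely the bookkeeping you carry out. Your closing observation --- that freeness of $K_1^\textnormal{top}$ is what lets equality of ranks be upgraded to an isomorphism --- correctly identifies the one point where the argument genuinely uses the structure of these groups.
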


\section{Comparison of algebraic $K$-theory of Leavitt path algebras and topological $K$-theory of graph $C^*$-algebras} \label{K-theory-comparison-sec}

In this section we examine the relationship between the topological $K$-theory of a graph $C^*$-algebra and the algebraic $K$-theory of the corresponding Leavitt path algebra for the graph. Interestingly, we find that this relationship depends on the field.  

As in the previous section, if $K$ is a field, we let $K^\times := K \setminus \{ 0 \}$ denote the units of $K$.  Also, for any group $G$ and any cardinal number $n$, we let $G^n$ denote the direct sum of $n$ copies of $G$.

\begin{definition} \label{nfq-group-def}
Let $G$ be an abelian group.  We say $G$ \emph{has no free quotients} if whenever $H$ is a proper subgroup of $G$, then the quotient $G/H$ is not a free abelian group.  (If $G$ is not a group with no free quotients, we say $G$ \emph{has a free quotient} or $G$ \emph{is a group with free quotients}.)
\end{definition}

\begin{proposition}
Let $G$ be an abelian group.  Then the following are equivalent:
\begin{itemize}
\item[(1)] $G$ has no free quotients.
\item[(2)] $G$ is not a direct sum of a free abelian group; i.e., If $H$ and $K$ are abelian groups and $G \cong H \oplus K$, then neither $H$ nor $K$ is a nonzero free abelian group.
\item[(3)] For every free abelian group $F$, it is the case that $\operatorname{Hom}_\Z(G, F) = \{ 0 \}$.
\end{itemize}
\end{proposition}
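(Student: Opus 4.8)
The plan is to prove the cycle of implications $(1) \Rightarrow (3) \Rightarrow (2) \Rightarrow (1)$, reducing everything to two standard structural facts about free abelian groups: first, that every subgroup of a free abelian group is again free abelian; and second, that free abelian groups are projective, so that any short exact sequence of abelian groups with free abelian quotient splits. In each step I would argue by contraposition, since the negations of the three conditions are the most convenient forms to manipulate: the negation of $(1)$ produces a proper subgroup with free quotient, the negation of $(2)$ produces a nonzero free abelian direct summand, and the negation of $(3)$ produces a nonzero homomorphism into a free abelian group.

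For $(1) \Rightarrow (3)$ I would assume $\operatorname{Hom}_\Z(G,F) \neq \{0\}$ for some free abelian $F$ and produce a free quotient of $G$. Given a nonzero $\phi \colon G \to F$, its image $\operatorname{im}\phi$ is a nonzero subgroup of the free abelian group $F$, hence is itself free abelian by the first structural fact; then $G/\ker\phi \cong \operatorname{im}\phi$ exhibits a nonzero free abelian quotient of $G$ by the proper subgroup $\ker\phi$ (proper because $\phi \neq 0$), contradicting $(1)$. For $(3) \Rightarrow (2)$ I would assume a decomposition $G \cong H \oplus K$ in which, say, $H$ is nonzero free abelian; composing the isomorphism with the projection onto $H$ yields a nonzero element of $\operatorname{Hom}_\Z(G,H)$ with $H$ free abelian, contradicting $(3)$. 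For $(2) \Rightarrow (1)$ I would assume a proper subgroup $H$ with $G/H$ free abelian; since $H$ is proper the quotient $G/H$ is nonzero, and by the projectivity of $G/H$ the sequence $0 \to H \to G \to G/H \to 0$ splits, giving $G \cong H \oplus (G/H)$ with $G/H$ a nonzero free abelian summand, contradicting $(2)$.

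I expect the argument itself to be routine, with essentially no computation; the content lies entirely in invoking the two structural facts correctly. The one point that warrants care is the treatment of the trivial group: since the zero group counts as free abelian of rank zero, I would note throughout that a \emph{proper} subgroup $H \subsetneq G$ forces $G/H \neq 0$, so that the free quotients and free summands appearing in $(1)$ and $(2)$ are automatically nonzero and the three conditions align. A secondary point is that the first structural fact is being used for free abelian groups of arbitrary, possibly uncountable, rank --- relevant here because $K^\times$ may be uncountable --- so I would simply cite it in its general form rather than reprove it.
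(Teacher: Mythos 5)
Your proposal is correct and follows essentially the same route as the paper: the same cycle $(1)\Rightarrow(3)\Rightarrow(2)\Rightarrow(1)$, using that subgroups of free abelian groups are free for the first implication, the projections onto direct summands for the second, and the splitting of $0 \to H \to G \to G/H \to 0$ when $G/H$ is free for the third. The only cosmetic difference is that you phrase each step contrapositively where the paper argues two of them directly; your added remarks about properness forcing the quotient to be nonzero are a sensible precaution and consistent with the paper's argument.
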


\begin{proof}
$(1) \implies (3)$.  Suppose that $G$ has no free quotients.  Let $F$ be a free abelian group, and let $\phi : G \to F$ be a group homomorphism.  Then $\im \phi$ is a free abelian group, because subgroups of free abelian groups are free abelian.  Since $G / \ker \phi \cong \im \phi$, and $G$ has no free quotients, it follows that $\ker \phi = G$ and $\phi = 0$.

$(3) \implies (2)$.  Suppose that $H$ and $K$ are abelian groups and that $G \cong H \oplus K$.  Then the homomorphism $\pi_1 : H \oplus K \to H$ given by $\pi_1(h,k) = h$ is surjective, and the homomorphism $\pi_2 : H \oplus K \to K$ given by $\pi_2(h,k) = k$ is surjective.  Hence $(3)$ implies that neither $H$ nor $K$ are nonzero free abelian groups.

$(2) \implies (1)$.  We shall establish the contrapositive.  Suppose that $G$ has a free quotient; that is, there exists a proper subgroup $H$ of $G$ such that $G/H$ is a free abelian group.  The fact that $G/H$ is free implies that the exact sequence $0 \to H \to G \to G/H \to 0$ is split exact.  Hence $G \cong H \oplus G/H$.  Since $H$ is a proper subgroup, $G/H$ is a nonzero free group, and $(2)$ does not hold.
\end{proof}

Abelian groups with no free quotients will be useful for us due to the following lemma.

\begin{lemma} \label{nfq-direct-sums-cancel-prop}
Suppose that $F_1$ and $F_2$ are free abelian groups and that $G_1$ and $G_2$ are abelian groups with no free quotients.  If $F_1 \oplus G_1 \cong F_2 \oplus G_2$, then $F_1 \cong F_2$ and $G_1 \cong G_2$.
\end{lemma}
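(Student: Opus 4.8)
The plan is to exploit characterization (3) of the preceding proposition, namely that an abelian group with no free quotients admits only the zero homomorphism into any free abelian group. Fix an isomorphism $\psi : F_1 \oplus G_1 \to F_2 \oplus G_2$, and write $\iota_{G_1} : G_1 \to F_1 \oplus G_1$ and $\iota_{G_2} : G_2 \to F_2 \oplus G_2$ for the canonical inclusions, and $\pi_{F_1}, \pi_{F_2}$ for the projections onto the free summands. The first step is to observe that the composite $\pi_{F_2} \circ \psi \circ \iota_{G_1} : G_1 \to F_2$ is a homomorphism from a group with no free quotients into a free abelian group, and hence is the zero map by part (3) of the proposition. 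This forces $\psi(\iota_{G_1}(G_1)) \subseteq \iota_{G_2}(G_2)$; that is, $\psi$ carries the copy of $G_1$ into the copy of $G_2$.

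Applying the same argument to $\psi^{-1}$ --- now using that $G_2$ has no free quotients and $F_1$ is free --- I obtain that $\pi_{F_1} \circ \psi^{-1} \circ \iota_{G_2} = 0$, so that $\psi^{-1}(\iota_{G_2}(G_2)) \subseteq \iota_{G_1}(G_1)$. Applying $\psi$ to this inclusion gives the reverse containment $\iota_{G_2}(G_2) \subseteq \psi(\iota_{G_1}(G_1))$, and combining the two containments yields $\psi(\iota_{G_1}(G_1)) = \iota_{G_2}(G_2)$. Since $\psi$ is injective, its restriction is an isomorphism from $\iota_{G_1}(G_1)$ onto $\iota_{G_2}(G_2)$, and therefore $G_1 \cong G_2$.

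Finally, because $\psi$ is an isomorphism carrying the subgroup $\iota_{G_1}(G_1)$ exactly onto the subgroup $\iota_{G_2}(G_2)$, it descends to an isomorphism of quotient groups $(F_1 \oplus G_1)/\iota_{G_1}(G_1) \to (F_2 \oplus G_2)/\iota_{G_2}(G_2)$. Identifying these quotients with $F_1$ and $F_2$, respectively, yields $F_1 \cong F_2$, completing the argument.

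I expect no serious obstacle here: the entire proof hinges on recognizing that part (3) of the earlier proposition is the correct tool, after which the two projection computations and the passage to quotients are routine. The only point requiring a little care is the bookkeeping that ensures $\psi$ maps the $G_1$-summand \emph{onto} (not merely \emph{into}) the $G_2$-summand, which is precisely why both $\psi$ and $\psi^{-1}$ must be invoked; once that symmetry is in hand, the cancellation of the free summands follows formally.
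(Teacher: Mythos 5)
Your proof is correct and follows essentially the same route as the paper's: both use characterization (3) (that a group with no free quotients admits only the zero homomorphism into a free abelian group) to show the isomorphism carries the $G_1$-summand into the $G_2$-summand, apply the same argument to the inverse to get equality, and then pass to quotients to cancel the free parts. No issues.
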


\begin{proof}
Let $\phi : F_1 \oplus G_1 \to F_2 \oplus G_2$ be an isomorphism.  Consider the group homomorphism $\iota : G_1 \to F_1 \oplus G_1$ given by $\iota(g) := (0,g)$, and the group homomorphism $\pi : F_2 \oplus G_2 \to F_2$ given by $\pi (f,g) = f$.  Since the composition $\pi \circ \phi \circ \iota : G_1 \to F_2$ is a homomorphism, $F_2$ is a free abelian group, and $G_1$ is an abelian group with no free quotients, it follows that $\pi \circ \phi \circ \iota = 0$.  Thus, $\im ( \phi \circ \iota) \subseteq 0 \oplus G_{2}$ which implies that $\phi (0 \oplus G_1) \subseteq 0 \oplus G_2$.  A similar argument using $\phi^{-1}$ in place of $\phi$ shows that $\phi^{-1} (0 \oplus G_2) \subseteq 0 \oplus G_1$.  Hence $\phi (0 \oplus G_1) = 0 \oplus G_2$, and we may conclude that $G_1 \cong G_2$.  In addition, $F_1 \cong (F_1 \oplus G_1) / (0 \oplus G_1) \cong (F_2 \oplus G_2) / \phi(0 \oplus G_1) = (F_2 \oplus G_2) / (0 \oplus G_2) \cong F_2$.
\end{proof}

\begin{proposition} \label{nfq-closed-sums-quotients-prop}
The class of abelian groups that have no free quotients is closed under (possibly infinite) direct sums and quotients.
\end{proposition}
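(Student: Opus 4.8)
The plan is to reduce everything to characterization (3) of the preceding proposition, namely that an abelian group $G$ has no free quotients if and only if $\operatorname{Hom}_\Z(G, F) = \{ 0 \}$ for every free abelian group $F$. This reformulation is what makes the statement tractable: verifying the definition directly (that no \emph{proper} subgroup produces a free quotient) would require tracking subgroups through the constructions, which is awkward for an infinite direct sum, whereas the $\operatorname{Hom}$-criterion turns each closure property into a one-line diagram chase using the appropriate mapping property.

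For closure under quotients, I would let $G$ have no free quotients, let $N \subseteq G$ be any subgroup, and write $q : G \to G/N$ for the quotient map. Given a free abelian group $F$ and a homomorphism $\phi : G/N \to F$, the composite $\phi \circ q : G \to F$ is a homomorphism from $G$ into a free abelian group, hence is zero by the hypothesis on $G$. Since $q$ is surjective, $\phi \circ q = 0$ forces $\phi = 0$. Thus $\operatorname{Hom}_\Z(G/N, F) = \{ 0 \}$ for every free abelian group $F$, and $G/N$ has no free quotients.

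For closure under (possibly infinite) direct sums, I would take a family $\{ G_i \}_{i \in I}$ of abelian groups each having no free quotients, set $G := \bigoplus_{i \in I} G_i$, and let $\iota_i : G_i \to G$ denote the canonical inclusions. Given a free abelian group $F$ and a homomorphism $\phi : G \to F$, each composite $\phi \circ \iota_i : G_i \to F$ vanishes by the hypothesis on $G_i$. Because every element of the direct sum $G$ is a finite sum of elements lying in the images of the $\iota_i$, and $\phi$ is additive, it follows that $\phi = 0$ on all of $G$. Hence $\operatorname{Hom}_\Z(G, F) = \{ 0 \}$ for every free abelian group $F$, so $G$ has no free quotients.

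I do not anticipate a genuine obstacle here; the substantive content has already been absorbed into the equivalence of the three conditions in the preceding proposition. The only point requiring a little care is in the direct-sum case, where one must use that it is the direct sum (coproduct) rather than the direct product: the inclusions $\iota_i$ have images that together generate $G$, so that a homomorphism vanishing on each summand vanishes everywhere. For an infinite direct product this generation argument fails, which is precisely why the statement is restricted to direct sums.
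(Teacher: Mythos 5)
Your proposal is correct and takes essentially the same approach as the paper: the direct-sum argument is identical, reducing to the statement that every homomorphism from $\bigoplus_{i \in I} G_i$ into a free abelian group vanishes because it vanishes on each canonically embedded summand and the summands generate the direct sum. The only (immaterial) difference is in the quotient case, where the paper simply observes that a quotient of a quotient is a quotient, while you run the $\operatorname{Hom}_{\Z}(-,F)=\{0\}$ criterion through the surjection $G \to G/N$; both are one-line arguments and both are valid.
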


\begin{proof}
It is easy to see that the class is closed under taking quotients, since quotients of quotients are quotients.

To see that the class is closed under direct sums, let  $\{ G_i \}_{i \in I}$ be a collection of abelian groups with no free quotients.  To show $\bigoplus_{i \in I} G_i$ has no free quotients, it suffices to show that if $F$ is a free group and $\phi : \bigoplus_{i \in I} G_i \to F$ is a group homomorphism, then $\phi$ is the zero map.  To this end, for each $i \in I$ let $h_i : G_i \to \bigoplus_{i \in I} G_i$ be the homomorphism sending an element $a \in G_i$ to the element $h_i(a) \in  \bigoplus_{i \in I} G_i$ that has $a$ in the $i$\textsuperscript{th} coordinate and $0$ elsewhere.  For any $x \in \bigoplus_{i \in I} G_i$ we may write $x = \sum_{i \in I} h_i (a_i)$ where $a_i \in G_i$ for all $i \in I$ and all but finitely many of the $a_i$ are zero.  The fact that $G_{i}$ has no free quotients implies that the homomorphism $\phi \circ h_i : G_i \to F$ is zero for all $i \in I$.  Thus $\phi (x) = \phi (  \sum_{i \in I} h_i (a_i) ) = \sum_{i \in I} \phi ( h_i (a_i)) = 0$.  Since $x \in \bigoplus_{i \in I} G_i$ was arbitrary, the map $\phi : \bigoplus_{i \in I} G_i \to F$ is the zero homomorphism, and $\bigoplus_{i \in I} G_i$ has no free quotients.
\end{proof}

We now consider several sufficient conditions for an abelian group to have no free quotients.

\begin{definition}
Let $(G,+)$ be an abelian group.  We make the following definitions:
\begin{itemize}
\item[(a)] We say $G$ is \emph{divisible} if for every $y \in G$ and for every $n \in \N$ there exists $x \in G$ such that $nx = y$.   
\item[(b)] If $p$ is a prime, we say $G$ is \emph{$p$-divisible} if for every $y \in G$ and for every $k \in \N$, there exists $x \in G$ such that $p^k x = y$.  
\item[(c)] We say that $G$ is \emph{weakly divisible} if for every $y \in G$ and for every $N \in \N$ there exist $n \geq N$ and $x \in G$ such that $nx = y$. 
\end{itemize}
\end{definition}

\begin{remark}
The terms ``divisible" and ``$p$-divisible" are established terminology in group theory, while ``weakly divisible" is a term we introduce here.  It is fairly easy to see that an abelian group is divisible if and only if it is $p$-divisible for every prime $p$.  In addition, if $G$ is $p$-divisible for some prime $p$, then $G$ is weakly divisible.
\end{remark}

\begin{proposition} \label{weakly-div-implies-nfq}
If $G$ is a weakly divisible abelian group, then $G$ has no free quotients.
\end{proposition}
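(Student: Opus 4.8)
The plan is to prove the contrapositive-free way by invoking the equivalent characterization of ``no free quotients'' established in condition~(3) of the Proposition following Definition~\ref{nfq-group-def}: an abelian group $G$ has no free quotients if and only if $\operatorname{Hom}_\Z(G,F) = \{0\}$ for every free abelian group $F$. Thus it suffices to show that whenever $F$ is a free abelian group and $\phi : G \to F$ is a group homomorphism, then $\phi$ is the zero map. This reduces everything to an infinite-divisibility observation, and lets me avoid arguing directly about arbitrary proper subgroups of $G$.

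First I would fix such a homomorphism $\phi : G \to F$ and an arbitrary element $y \in G$, with the goal of showing $\phi(y) = 0$. Here the weak divisibility of $G$ does the work: given any $N \in \N$, there exist $n \geq N$ and $x \in G$ with $nx = y$, and applying $\phi$ gives $\phi(y) = n\,\phi(x) \in nF$. Since $N$ was arbitrary, this shows that $\phi(y)$ lies in $nF$ for integers $n$ of arbitrarily large magnitude; that is, $\phi(y)$ is divisible in $F$ by arbitrarily large positive integers.

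The remaining step is the elementary fact that the only element of a free abelian group divisible by arbitrarily large integers is $0$. Writing $F = \bigoplus_{i \in I} \Z$ and expressing $\phi(y) = (a_i)_{i \in I}$ with all but finitely many $a_i$ equal to zero, the condition $\phi(y) \in nF$ forces $n \mid a_i$ in $\Z$ for every $i \in I$. If some coordinate $a_{i_0}$ were nonzero, then choosing $n > |a_{i_0}|$ would contradict $n \mid a_{i_0}$; hence every $a_i = 0$ and $\phi(y) = 0$. As $y \in G$ was arbitrary, $\phi = 0$, and by the characterization $G$ has no free quotients.

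I expect the only ``hard part'' to be locating the right reduction, namely recognizing that condition~(3) converts the statement into one about homomorphisms into free abelian groups; once that is in hand, the coordinate argument showing that weak divisibility forces $\phi(y)$ to be infinitely divisible (and hence zero) is entirely routine. One could alternatively give a direct proof by assuming $G/H$ is free for a proper subgroup $H$ and deriving a contradiction from a lift of a nonzero element, but routing through condition~(3) is cleaner and avoids splitting the exact sequence explicitly.
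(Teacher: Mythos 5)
Your proof is correct and rests on the same key observation as the paper's: the only element of a free abelian group divisible by arbitrarily large integers is zero. The only difference is cosmetic --- you route through the $\operatorname{Hom}_\Z(G,F)=\{0\}$ characterization (condition~(3) of the proposition following Definition~\ref{nfq-group-def}), whereas the paper argues directly that quotients of weakly divisible groups are weakly divisible and hence cannot be nontrivial free abelian groups.
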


\begin{proof}
To begin, we see that in a free abelian group the only element that is divisible by arbitrarily large $n \in N$ is the zero element.  Therefore, no nontrivial weakly divisible abelian group is free abelian.  Likewise, since any quotient of a weakly divisible group is weakly divisible, we see that no nontrivial quotient of a weakly divisible group is free abelian.  Hence a weakly divisible abelian group has no free quotients.
\end{proof}

\begin{remark}
Note that if $(G, \cdot)$ is an abelian group with the group operation written multiplicatively instead of additively, then $G$ is divisible if every element has an $n$\textsuperscript{th}-root in $G$ for all $n \in \N$, $G$ is $p$-divisible if every element has a $(p^k)$\textsuperscript{th}-root in $G$ for all $k \in \N$, and $G$ is weakly divisible if every element has an $n$\textsuperscript{th}-root in $G$ for arbitrarily large $n$.
\end{remark}

\begin{definition} \label{nfq-field-def}
If $K$ is a field, we say that $K$ \emph{has no free quotients} if the group of units $(K^\times, \cdot)$ is an abelian group that has no free quotients.  (If $K$ is not a field with no free quotients, we say $K$ \emph{has a free quotient}.)

\end{definition}

\begin{proposition} \label{nfg-field-examples}
The class of fields with no free quotients contains the following:

\begin{itemize}
\item All fields $K$ such that $(K^\times, \cdot)$ is a torsion group.
\item All fields $K$ such that $(K^\times, \cdot)$ is weakly divisible.  (This includes all fields $K$ such that $(K^\times, \cdot)$ is $p$-divisible for some prime $p$, and it also includes all fields $K$ such that $(K^\times, \cdot)$ is divisible.)
\item All algebraically closed fields.
\item All fields that are perfect with characteristic $p >0$.
\item All finite fields.
\item The field $\C$ of complex numbers.
\item The field $\R$ of real numbers.
\end{itemize}
\end{proposition}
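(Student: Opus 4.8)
The plan is to verify, case by case, that the multiplicative group $(K^\times, \cdot)$ has no free quotients, drawing on three ingredients established above: Proposition~\ref{weakly-div-implies-nfq} (every weakly divisible abelian group has no free quotients), Proposition~\ref{nfq-closed-sums-quotients-prop} (the class of groups with no free quotients is closed under direct sums and quotients), and the elementary observation that every torsion abelian group has no free quotients. I would record that last fact first: any quotient of a torsion group is torsion, while a free abelian group is torsion-free, so a torsion quotient that is free abelian must be trivial; hence a torsion group has no proper subgroup whose quotient is free. This immediately disposes of the first bullet, and since the multiplicative group of a finite field is finite (indeed cyclic), hence torsion, it also disposes of the finite-field bullet.

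The second bullet is a direct restatement of Proposition~\ref{weakly-div-implies-nfq}, and the parenthetical cases follow from the remark that $p$-divisible and divisible groups are weakly divisible. The remaining bullets I would reduce to weak divisibility of $(K^\times, \cdot)$. If $K$ is algebraically closed, then for every $a \in K^\times$ and every $n \in \N$ the polynomial $x^n - a$ has a root in $K$, necessarily nonzero, so every element of $K^\times$ has an $n$-th root for all $n$; thus $(K^\times, \cdot)$ is divisible, hence weakly divisible. In particular this covers $\C$, although one may instead simply note that $\C^\times$ is divisible. If $K$ is perfect of characteristic $p > 0$, then the Frobenius $x \mapsto x^p$ is surjective, and its restriction to $K^\times$ is a surjection onto $K^\times$; iterating shows that every element of $K^\times$ has a $p^k$-th root for all $k \in \N$, so $(K^\times, \cdot)$ is $p$-divisible, hence weakly divisible.

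For the real field I would argue slightly differently, since $\R^\times$ is neither torsion nor divisible. Here I would use that $\R^\times$ is the internal direct product of the torsion subgroup $\{ \pm 1 \}$ and the subgroup $\R_{>0}$ of positive reals; the latter is divisible, because every positive real has a (unique) positive $n$-th root for every $n \in \N$. Each factor therefore has no free quotients, and since a product of two groups is a direct sum, Proposition~\ref{nfq-closed-sums-quotients-prop} yields that $\R^\times$ has no free quotients. Alternatively, one checks directly that $\R^\times$ is weakly divisible, as any negative $y$ has an $n$-th root in $\R$ for every odd $n$, and odd $n$ may be taken arbitrarily large.

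I do not expect any genuinely hard step in this argument; it is essentially a bookkeeping exercise built on the two structural Propositions already available. The only points requiring care are translating each field-theoretic hypothesis into the correct divisibility statement for the multiplicative group, and treating $\R$, where $\R^\times$ fails to be torsion or divisible yet still splits as a direct sum of groups with no free quotients (or, equivalently, is weakly divisible via odd roots of negative numbers).
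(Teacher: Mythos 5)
Your proof is correct and follows essentially the same route as the paper: reduce each bullet to the torsion case or to weak divisibility and invoke Proposition~\ref{weakly-div-implies-nfq}. The only cosmetic differences are that you handle finite fields via torsion rather than via the perfect--characteristic-$p$ case, and you treat $\R^\times$ primarily through the splitting $\{\pm 1\}\oplus\R_{>0}$ and Proposition~\ref{nfq-closed-sums-quotients-prop}, whereas the paper simply observes that $\R^\times$ is $3$-divisible (your stated alternative); both variants are fine.
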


\begin{proof}
If $K$ is a field and $(K^\times, \cdot)$ is a torsion group, then any nonzero quotient of $K^\times$ has torsion and is not a free group, so that $K$ has no free quotients.  If $K$ is a field and $(K^\times, \cdot)$ is weakly divisible, then it follows from Proposition~\ref{weakly-div-implies-nfq} that $K$ has no free quotients.  If $K$ is algebraically closed, then for any $y \in K^\times$ and any $n \in \N$, the equation $x^n = y$ in the variable $x$ has a nonzero root in $K$, implying $(K^\times, \cdot)$ is divisible and hence also weakly divisible.  If $K$ is perfect with characteristic $p > 0$, then the Frobenius endomorphism $x \mapsto x^p$ is an automorphism, and every element of $K$ has a $p$\textsuperscript{th}-root, which implies that $(K^\times, \cdot)$ is $p$-divisible, and hence weakly divisible.  If $K$ is a finite field, then $K$ is perfect with characteristic $p > 0$, and thus covered by the prior case.  The field $\C$ of complex numbers is algebraically closed, and thus covered by a prior case.  Since every real number has a real cube root, the field $\R$ of real numbers  is $3$-divisible, and thus weakly divisible, and covered by a prior case.
\end{proof}

The following proposition shows us that $\Q$ is not a field with no free quotients.

\begin{proposition} \label{Q-times-Z-inf-prop}
If $\Q$ is the field of rational numbers, then $(\Q^\times, \cdot) \cong (\Z_{2} \oplus \Z \oplus \Z \oplus \ldots, +)$.  
\end{proposition}

\begin{proof}
List the positive prime integers in increasing order as $p_1, p_2, p_3, \ldots$.  (So, in particular, $p_1 = 2, p_2 = 3, p_3 = 5, \ldots$.)  Then it is straightforward to verify that the map from $\Z_{2} \oplus \Z \oplus \Z \oplus \ldots \to \Q^\times$ given by $$([m] , n_1, n_2, n_3, \ldots ) \mapsto (-1)^{m} p_1^{n_1} p_2^{n_2} p_3^{n_3} \ldots$$ is a group isomorphism from $(\Z_{2}\oplus \Z \oplus \Z \oplus \ldots, +)$ onto $(\Q^\times, \cdot)$.
\end{proof}

\begin{remark}
Proposition~\ref{Q-times-Z-inf-prop} shows that $\Q$ is not a field with no free quotients.  Since $\Q$ is a perfect field of characteristic 0, we see that the condition $p>0$ cannot be removed in the fourth bulleted point of Proposition~\ref{nfg-field-examples}.
\end{remark}

\begin{theorem} \label{LPA-K-implies-C-star-K-thm}
Let $E$ and $F$ be graphs.
\begin{itemize}
\item[(1)] If $K_0^\textnormal{alg}(L_K(E)) \cong K_0^\textnormal{alg}(L_K(F))$, then $K_0^\textnormal{top} (C^*(E)) \cong K_0^\textnormal{top} (C^*(F))$.  In addition, if the isomorphism from $K_0^\textnormal{alg}(L_K(E))$ to $K_0^\textnormal{alg}(L_K(F))$ is an order isomorphism, then there is an order isomorphism from $K_0^\textnormal{top} (C^*(E))$ onto $K_0^\textnormal{top} (C^*(F))$.  
\item[(2)] If $K$ is a field with no free quotients (see Definition~\ref{nfq-group-def} and Definition~\ref{nfq-field-def}), and if $K_1^\textnormal{alg}(L_K(E)) \cong K_1^\textnormal{alg}(L_K(F))$, then $K_1^\textnormal{top} (C^*(E)) \cong K_1^\textnormal{top} (C^*(F))$.
\end{itemize}
\end{theorem}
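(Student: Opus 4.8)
The plan is to read both parts off the $K$-theory formulas in Proposition~\ref{K-theory-comp-prop}, exploiting the fact that the $K_0^\textnormal{alg}$-group and the $K_0^\textnormal{top}$-group are computed by \emph{literally the same} cokernel, while the algebraic and topological $K_1$-groups differ only by a summand that has no free quotients. Throughout, write $M_E$ for the map $\left(\begin{smallmatrix} B_E^t-I \\ C_E^t\end{smallmatrix}\right) : \Z^{E^0_\textnormal{reg}} \to \Z^{E^0}$, and similarly $M_F$.

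For Part~(1), Proposition~\ref{K-theory-comp-prop} supplies canonical isomorphisms $K_0^\textnormal{top}(C^*(E)) \cong \coker M_E \cong K_0^\textnormal{alg}(L_K(E))$, and likewise for $F$. Hence $K_0^\textnormal{alg}(L_K(E)) \cong K_0^\textnormal{alg}(L_K(F))$ forces $K_0^\textnormal{top}(C^*(E)) \cong \coker M_E \cong \coker M_F \cong K_0^\textnormal{top}(C^*(F))$. For the order statement I would observe that the canonical isomorphism $K_0^\textnormal{top}(C^*(E)) \to \coker M_E$ sends $[p_v]_0 \mapsto [\delta_v]$ while $K_0^\textnormal{alg}(L_K(E)) \to \coker M_E$ sends $[v]_0 \mapsto [\delta_v]$; since $[s_es_e^*]_0 \mapsto [\delta_{r(e)}]$ and $[ee^*]_0 \mapsto [\delta_{r(e)}]$, both isomorphisms carry their respective positive cones onto the \emph{same} cone of $\coker M_E$. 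Thus the composite $K_0^\textnormal{top}(C^*(E)) \to \coker M_E \to K_0^\textnormal{alg}(L_K(E))$ is an order isomorphism, and pre- and post-composing with the given order isomorphism $K_0^\textnormal{alg}(L_K(E)) \to K_0^\textnormal{alg}(L_K(F))$ and the analogous canonical maps for $F$ yields the desired order isomorphism of topological $K_0$-groups.

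For Part~(2), Proposition~\ref{K-theory-comp-prop} gives $K_1^\textnormal{top}(C^*(E)) \cong \ker M_E$ and $K_1^\textnormal{alg}(L_K(E)) \cong \ker M_E \oplus \coker(M_E^\times)$, where $M_E^\times$ denotes the same matrix acting on $(K^\times)^{E^0_\textnormal{reg}} \to (K^\times)^{E^0}$ (written multiplicatively, as in Remark~\ref{r:K-theory-comp-prop}). The two structural observations I would record are: (i) $\ker M_E$ is a subgroup of the free abelian group $\Z^{E^0_\textnormal{reg}}$, hence is itself free abelian; and (ii) $\coker(M_E^\times)$ is a quotient of $(K^\times)^{E^0}$, so since $K^\times$ has no free quotients by hypothesis, Proposition~\ref{nfq-closed-sums-quotients-prop} (closure of the ``no free quotients'' class under direct sums and quotients) shows $\coker(M_E^\times)$ has no free quotients; the same holds for $F$. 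The hypothesis $K_1^\textnormal{alg}(L_K(E)) \cong K_1^\textnormal{alg}(L_K(F))$ then reads $F_1 \oplus G_1 \cong F_2 \oplus G_2$ with $F_1 := \ker M_E$, $F_2 := \ker M_F$ free abelian and $G_1 := \coker(M_E^\times)$, $G_2 := \coker(M_F^\times)$ having no free quotients, so Lemma~\ref{nfq-direct-sums-cancel-prop} immediately delivers $\ker M_E \cong \ker M_F$, that is, $K_1^\textnormal{top}(C^*(E)) \cong K_1^\textnormal{top}(C^*(F))$.

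There is essentially no genuine obstacle once Proposition~\ref{K-theory-comp-prop}, Proposition~\ref{nfq-closed-sums-quotients-prop}, and Lemma~\ref{nfq-direct-sums-cancel-prop} are in hand; the only point requiring care is the bookkeeping in Part~(2), namely correctly matching each summand to the right hypothesis of the cancellation lemma (free abelian versus no free quotients). The role of the ``no free quotients'' assumption on $K$ is precisely to secure observation~(ii): without it the $\coker(M_E^\times)$ summand could contribute free abelian factors that would mix with the $\ker$ summand and destroy the clean cancellation, which is exactly the phenomenon exploited in the counterexamples of Section~\ref{example-sec}.
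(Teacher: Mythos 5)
Your proposal is correct and follows essentially the same route as the paper: Part~(1) is read off from the fact that Proposition~\ref{K-theory-comp-prop} identifies both $K_0$-groups (order-)isomorphically with the same cokernel, and Part~(2) decomposes $K_1^\textnormal{alg}$ into a free abelian kernel summand plus a no-free-quotients cokernel summand and then cancels via Lemma~\ref{nfq-direct-sums-cancel-prop} together with Proposition~\ref{nfq-closed-sums-quotients-prop}. Your treatment of the order statement in Part~(1) just spells out in more detail what the paper asserts in one line.
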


\begin{proof}
Part~(1) follows from Proposition~\ref{K-theory-comp-prop}, which shows that for any graph $E$ the group $K_0^\textnormal{top}(C^*(E))$ is order isomorphic to $K_0^\textnormal{alg}(L_K(E))$.

For Part~(2), we use Proposition~\ref{K-theory-comp-prop}(b) to write
\begin{align*}
K_1^\textnormal{alg} (L_K(E)) \cong \ker \bigg( &\begin{pmatrix} B_E^t-I \\ C_E^t\end{pmatrix} : \Z^{E^0_\textnormal{reg}} \to \Z^{E^0} \bigg) \\
& \oplus  \coker \left( \begin{pmatrix} B_E^t-I \\ C_E^t\end{pmatrix} : (K^\times)^{E^0_\textnormal{reg}} \to (K^\times)^{E^0} \right)
\end{align*}
and
\begin{align*}
K_1^\textnormal{alg} (L_K(F)) \cong \ker \bigg( &\begin{pmatrix} B_F^t-I \\ C_F^t\end{pmatrix} : \Z^{F^0_\textnormal{reg}} \to \Z^{F^0} \bigg) \\
& \oplus  \coker \left( \begin{pmatrix} B_F^t-I \\ C_F^t\end{pmatrix} : (K^\times)^{F^0_\textnormal{reg}} \to (K^\times)^{F^0} \right).
\end{align*}
Since $\Z^{E^0_\textnormal{reg}}$ and $\Z^{F^0_\textnormal{reg}}$ are free abelian groups, it follows that the subgroups $$\ker \bigg( \begin{pmatrix} B_E^t-I \\ C_E^t\end{pmatrix} : \Z^{E^0_\textnormal{reg}} \to \Z^{E^0} \bigg) \quad \text{ and } \quad \ker \bigg( \begin{pmatrix} B_F^t-I \\ C_F^t\end{pmatrix} : \Z^{F^0_\textnormal{reg}} \to \Z^{F^0} \bigg)$$ are free abelian groups.  In addition, since $K$ is a field with no free quotients, $K^\times$ is a group with no free quotients, and it follows from Proposition~\ref{nfq-closed-sums-quotients-prop} that the direct sums $(K^\times)^{E^0}$ and $(K^\times)^{F^0}$ are abelian groups with no free quotients, and that the quotients 
$$\coker \bigg( \begin{pmatrix} B_E^t-I \\ C_E^t\end{pmatrix} : (K^\times)^{E^0_\textnormal{reg}} \to (K^\times)^{E^0} \bigg)$$ and $$\coker \bigg( \begin{pmatrix} B_F^t-I \\ C_F^t\end{pmatrix} : (K^\times)^{F^0_\textnormal{reg}} \to (K^\times)^{F^0} \bigg)$$ are abelian groups with no free quotients.  Since $K_1^{\textnormal{alg}} (L_K(E)) \cong K_1^{\textnormal{alg}}(L_K(F))$, it follows from Lemma~\ref{nfq-direct-sums-cancel-prop} that $$\ker \bigg( \begin{pmatrix} B_E^t-I \\ C_E^t\end{pmatrix} : \Z^{E^0_\textnormal{reg}} \to \Z^{E^0} \bigg) \cong \ker \bigg( \begin{pmatrix} B_F^t-I \\ C_F^t\end{pmatrix} : \Z^{F^0_\textnormal{reg}} \to \Z^{F^0} \bigg).$$  It then follows from Proposition~\ref{K-theory-comp-prop} that $K_1^\textnormal{top} (C^*(E)) \cong K_1^\textnormal{top} (C^*(F))$.
\end{proof}

In the next corollary we use Proposition~\ref{K-theory-comp-prop} and Theorem~\ref{LPA-K-implies-C-star-K-thm} allow us to deduce relationships among the algebraic $K$-groups of Leavitt path algebras, the topological $K$-groups of graph $C^*$-algebras, and the number of singular vertices in the graphs (cf. Corollary~\ref{sing-vertices-SME-inv-cor} and Corollary~\ref{K0+K1-is-K1+sing-cor}).

\begin{corollary} \label{K1-tell-number-singular-cor}
Let $E$ and $F$ be graphs that each have a finite number of vertices, let $K$ be a field, and consider the following statements:
\begin{itemize}
\item[(1)] $K_0^\textnormal{top} (C^*(E)) \cong K_0^\textnormal{top} (C^*(F))$ and $K_1^\textnormal{top} (C^*(E)) \cong K_1^\textnormal{top} (C^*(F))$.
\item[(2)] $K_0^\textnormal{alg} (L_K(E)) \cong K_0^\textnormal{alg} (L_K(F))$ and $|E^0_\textnormal{sing}| = |F^0_\textnormal{sing}|$. 
\item[(3)] $K_0^\textnormal{alg} (L_K(E)) \cong K_0^\textnormal{alg} (L_K(F))$ and $K_1^\textnormal{alg} (L_K(E)) \cong K_1^\textnormal{alg} (L_K(F))$.
\end{itemize}
In general, we have the following implications:
$$(1) \iff (2) \implies (3).$$
If, in addition, $K$ is a field with no free quotients, then the implication $(3) \implies (2)$ also holds, so that all three statements are equivalent when $K$ is a field with no free quotients.
\end{corollary}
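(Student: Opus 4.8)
The plan is to reduce the entire statement to three facts already in hand: the identification of the two $K_0$-groups in Proposition~\ref{K-theory-comp-prop}, the structure theorem for finitely many vertices in Corollary~\ref{K-theory-for-finite-vertices-cor}, and the comparison theorem in Theorem~\ref{LPA-K-implies-C-star-K-thm}. The first observation I would make is that Proposition~\ref{K-theory-comp-prop} supplies an (order-preserving) isomorphism $K_0^\textnormal{alg}(L_K(E)) \cong K_0^\textnormal{top}(C^*(E))$ for every graph $E$ and every field $K$. Consequently the $K_0$-hypothesis appearing in each of (1), (2), and (3) is one and the same condition: $K_0^\textnormal{top}(C^*(E)) \cong K_0^\textnormal{top}(C^*(F))$ if and only if $K_0^\textnormal{alg}(L_K(E)) \cong K_0^\textnormal{alg}(L_K(F))$. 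With this identification in place, statement (1) is precisely item (1) of Corollary~\ref{K0+K1-is-K1+sing-cor} and statement (2) is precisely item (2) of that corollary, so the equivalence $(1) \iff (2)$ is immediate.

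For $(2) \implies (3)$ I would invoke the structure theorem of Corollary~\ref{K-theory-for-finite-vertices-cor}. Writing $d_1,\ldots,d_k$ and $m_E$ for the invariants of $E$ furnished by that corollary (and the corresponding quantities for $F$), the isomorphism $K_0^\textnormal{alg}(L_K(E)) \cong K_0^\textnormal{alg}(L_K(F))$ forces the torsion data $d_1,\ldots,d_k$ to agree for the two graphs and forces the free ranks to agree, i.e.\ $m_E + |E^0_\textnormal{sing}| = m_F + |F^0_\textnormal{sing}|$. The key point is that $K_0^\textnormal{alg}$ alone only sees the \emph{sum} $m + |E^0_\textnormal{sing}|$; the extra hypothesis $|E^0_\textnormal{sing}| = |F^0_\textnormal{sing}|$ from (2) is exactly what is needed to separate the two summands and conclude $m_E = m_F$. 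Once the parameters $d_1,\ldots,d_k$, $m$, and $|E^0_\textnormal{sing}|$ all coincide for $E$ and $F$, the explicit formula for $K_1^\textnormal{alg}$ in Corollary~\ref{K-theory-for-finite-vertices-cor} produces literally the same group for both graphs, whence $K_1^\textnormal{alg}(L_K(E)) \cong K_1^\textnormal{alg}(L_K(F))$ and (3) holds.

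The remaining implication $(3) \implies (2)$ under the hypothesis that $K$ has no free quotients is where the substantive earlier work is used. Assuming (3), Theorem~\ref{LPA-K-implies-C-star-K-thm}(1) turns the $K_0^\textnormal{alg}$-isomorphism into a $K_0^\textnormal{top}$-isomorphism, and Theorem~\ref{LPA-K-implies-C-star-K-thm}(2)---which is precisely the place the no-free-quotients hypothesis enters---turns the $K_1^\textnormal{alg}$-isomorphism into a $K_1^\textnormal{top}$-isomorphism. This is exactly statement (1), and by the already established equivalence $(1) \iff (2)$ we obtain (2). Alternatively one may finish directly: the two topological isomorphisms give $\rank K_n^\textnormal{top}(C^*(E)) = \rank K_n^\textnormal{top}(C^*(F))$ for $n = 0,1$, and then the rank identity $|E^0_\textnormal{sing}| = \rank K_0^\textnormal{top}(C^*(E)) - \rank K_1^\textnormal{top}(C^*(E))$ of Corollary~\ref{sing-vertices-SME-inv-cor} yields $|E^0_\textnormal{sing}| = |F^0_\textnormal{sing}|$.

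Since every ingredient is already available, the corollary is essentially a bookkeeping argument, and I do not expect any genuine computational obstacle. The one delicate point is conceptual rather than technical, namely recognizing in $(2) \implies (3)$ that $K_0^\textnormal{alg}$ records only the combined free rank $m + |E^0_\textnormal{sing}|$, so that the number of singular vertices is exactly the supplementary datum needed to recover $m$ and hence the full $K_1^\textnormal{alg}$-formula. The dependence of $(3) \implies (2)$ on the no-free-quotients hypothesis is essential and cannot be removed in general, which is what the counterexamples of Section~\ref{example-sec} are designed to demonstrate; I would flag this reliance explicitly rather than attempt to weaken it.
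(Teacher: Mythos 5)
Your proposal is correct and follows essentially the same route as the paper's proof: the equivalence $(1)\iff(2)$ via Proposition~\ref{K-theory-comp-prop} and Corollary~\ref{K0+K1-is-K1+sing-cor}, the implication $(2)\implies(3)$ by matching invariant factors and free ranks from Corollary~\ref{K-theory-for-finite-vertices-cor}, and $(3)\implies(2)$ via Theorem~\ref{LPA-K-implies-C-star-K-thm} reduced back to $(1)\iff(2)$. The alternative finish through Corollary~\ref{sing-vertices-SME-inv-cor} is a harmless minor variant.
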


\begin{proof}
That $(1) \iff (2)$ follows from Corollary~\ref{K0+K1-is-K1+sing-cor} and the fact that $K_0^\textnormal{top}(C^*(E)) \cong K_0^\textnormal{alg}(L_K(E))$ by Proposition~\ref{K-theory-comp-prop}.

To deduce $(2) \implies (3)$, we use Corollary~\ref{K-theory-for-finite-vertices-cor} to obtain
\begin{align*}
K_0^\textnormal{alg} (L_K(E)) &\cong \Z_{d_1} \oplus \ldots \oplus \Z_{d_k} \oplus \Z^{m_1 + |E^0_\textnormal{sing}|} \\
K_1^\textnormal{alg} (L_K(E)) &\cong \Z^{m_{1}} \oplus K^{\times} / \langle x^{ d_{1} } : x \in K^{\times} \rangle \oplus \ldots \\
& \qquad \qquad \qquad \ldots \oplus K^{\times} / \langle x^{ d_{k} } : x \in K^{\times} \rangle \oplus (K^\times)^{m_{1} + |E^0_\textnormal{sing}|}
\end{align*}
for some $m_1 \in \N \cup \{ 0 \}$ and with $d_i \, | \, d_{i+1}$ for $1 \leq i \leq k-1$.  Corollary~\ref{K-theory-for-finite-vertices-cor} also shows that 
\begin{align*}
K_0^\textnormal{alg} (L_K(F)) &\cong \Z_{e_1} \oplus \ldots \oplus \Z_{e_\ell} \oplus \Z^{m_2 + |F^0_\textnormal{sing}|} \\
K_1^\textnormal{alg} (L_K(F)) &\cong \Z^{m_{2}} \oplus K^{\times} / \langle x^{ e_{1} } : x \in K^{\times} \rangle \oplus \ldots \\
& \qquad \qquad \qquad \ldots \oplus K^{\times} / \langle x^{ e_{\ell} } : x \in K^{\times} \rangle  \oplus (K^\times)^{m_{2} + |F^0_\textnormal{sing}|}.
\end{align*}
for some $m_2 \in \N \cup \{ 0 \}$ and with $e_i \, | \, e_{i+1}$ for $1 \leq i \leq l-1$.  Since we have $K_0^\textnormal{alg} (L_K(E)) \cong K_0^\textnormal{alg} (L_K(F))$ by hypothesis, it follows that $$m_1 + |E^0_\textnormal{sing}| = \rank K_0^\textnormal{alg} (L_K(E)) = \rank K_0^\textnormal{alg} (L_K(F)) = m_2 + |F^0_\textnormal{sing}|,$$ and the fact that $|E^0_\textnormal{sing}| = |F^0_\textnormal{sing}|$ then implies that $m_1 = m_2$.  Since $( d_{1} , d_{2} , \dots, d_{k} )$ and $( e_{1} , e_{2}, \dots, e_{\ell} )$ are the invariant factors of $K_0^\textnormal{alg} (L_K(E))$ and $K_0^\textnormal{alg} (L_K(F))$, respectively, we have that $\ell = k$ and $d_{i} = e_{i}$.  It follows that $K_1^\textnormal{alg} (L_K(E)) \cong K_1^\textnormal{alg} (L_K(F))$.

If, in addition, $K$ is a field with no free quotients, then Theorem~\ref{LPA-K-implies-C-star-K-thm} shows that $(3) \implies (1)$, and since $(1) \iff (2)$ from above, we have $(3) \implies (2)$.
\end{proof}

\begin{remark}
The implication $(3) \implies (2)$ of Corollary~\ref{K1-tell-number-singular-cor} can fail for general fields.  In particular, we will later show in Example~\ref{K-theory-counter-ex} and Example~\ref{K-theory-not-determined-ex} that when $K = \Q$ (and hence $K$ has free quotients) it is possible for $(3)$ to hold while $(2)$ and $(1)$ do not.
\end{remark}

We thank Enrique Pardo for bringing our attention to the following and pointing out that it can be used to strengthen certain results from an earlier version of this paper.

\begin{lemma} \label{l:fieldextension}
Let $K$ be a field, and let $K'$ be a field extension of $K$.  Then $K' \otimes_{K} {M}_{\infty} ( L_{K}(E) ) \cong {M}_{\infty} ( L_{K'}(E) )$ (as $K$-algebras).  Consequently, if $L_{K}(E)$ and $L_{K} (F)$ are Morita equivalent, then $L_{K'}(E)$ and $L_{K'} (F)$ are Morita equivalent. 
\end{lemma}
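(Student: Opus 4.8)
The plan is to deduce everything from the single identification $K' \otimes_K L_K(E) \cong L_{K'}(E)$, and then transport Morita equivalence through the $M_\infty$-stabilization. First I would prove that base change along the inclusion $K \hookrightarrow K'$ converts the Leavitt path algebra over $K$ into the Leavitt path algebra over $K'$. The crucial observation is that relations (1)--(4) in Definition~\ref{LPA-lin-invo-def} carry no field coefficients, so in the $K'$-algebra $K' \otimes_K L_K(E)$ the elements $\{1 \otimes v, 1 \otimes e, 1 \otimes e^* : v \in E^0, e \in E^1\}$ form an $E$-family. By the universal property of $L_{K'}(E)$ there is a $K'$-algebra homomorphism $\phi : L_{K'}(E) \to K' \otimes_K L_K(E)$ sending $v, e, e^*$ to $1 \otimes v, 1 \otimes e, 1 \otimes e^*$. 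It is surjective because its image contains every $1 \otimes (\alpha\beta^*)$, and these $K'$-span $K' \otimes_K L_K(E)$. Equipping $K' \otimes_K L_K(E)$ with the $\Z$-grading inherited from $L_K(E)$ (placing $K'$ in degree $0$), the map $\phi$ is graded and satisfies $\phi(v) = 1 \otimes v \neq 0$ for every $v \in E^0$; hence the graded uniqueness theorem forces $\phi$ to be injective, so $\phi$ is an isomorphism of $K'$-algebras, and in particular of $K$-algebras.

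Next I would obtain the $M_\infty$-statement formally. For any $K$-algebra $A$ one has $M_\infty(A) \cong A \otimes_K M_\infty(K)$ (via $a\, e_{ij} \mapsto a \otimes e_{ij}$, valid even for the nonunital $A = L_K(E)$), and base change commutes with this tensor factor, so
\begin{align*}
K' \otimes_K M_\infty(L_K(E)) &\cong K' \otimes_K \bigl(L_K(E) \otimes_K M_\infty(K)\bigr) \\
&\cong \bigl(K' \otimes_K L_K(E)\bigr) \otimes_K M_\infty(K) \\
&\cong L_{K'}(E) \otimes_K M_\infty(K) \cong M_\infty(L_{K'}(E)),
\end{align*}
all maps being $K$-algebra isomorphisms. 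This establishes the first assertion of the lemma.

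For the consequence I would use that a Leavitt path algebra is a ring with local units (the finite sums of distinct vertices), together with the standard characterization that two rings with local units $A$ and $B$ are Morita equivalent if and only if $M_\infty(A) \cong M_\infty(B)$. If $L_K(E) \sim_M L_K(F)$, this characterization gives $M_\infty(L_K(E)) \cong M_\infty(L_K(F))$; applying the functor $K' \otimes_K (-)$ to this isomorphism and invoking the displayed identity twice yields $M_\infty(L_{K'}(E)) \cong M_\infty(L_{K'}(F))$, and running the characterization in the reverse direction produces $L_{K'}(E) \sim_M L_{K'}(F)$.

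The genuinely non-formal ingredients are twofold: first, ensuring base change does not collapse the algebra, i.e.\ the injectivity of $\phi$, which is exactly where the graded uniqueness theorem and the coefficient-free form of the Leavitt relations are used; and second, invoking the correct $M_\infty$-characterization of Morita equivalence in the setting of rings with local units (where one half, $A \sim_M M_\infty(A)$, is standard). Everything else is routine functoriality of base change. I expect the first of these to be the main point to get right, since it is what makes the identification $K' \otimes_K L_K(E) \cong L_{K'}(E)$ an honest isomorphism rather than merely a surjection. As an alternative to the second ingredient, one could instead transport a surjective Morita context for $(L_K(E), L_K(F))$ directly through $K' \otimes_K (-)$, using that base change is monoidal over $K$, thereby avoiding the stabilization theorem altogether; I would keep the $M_\infty$ route as primary since it reuses the isomorphism just proved.
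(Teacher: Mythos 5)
Your proof is correct, but it establishes the key isomorphism by a genuinely different route than the paper. The paper does not prove the base change $K' \otimes_K L_K(E) \cong L_{K'}(E)$ directly for $E$; instead it passes to the stabilized graph $SE$ (obtained by attaching an infinite head to each vertex), cites \cite[Proposition~9.8]{AT1} to identify $M_\infty(L_K(E))$ with $L_K(SE)$ and $M_\infty(L_{K'}(E))$ with $L_{K'}(SE)$, and then applies the base-change theorem \cite[Theorem~8.1]{TomCoeffRing} to the single graph $SE$, so that the whole first assertion becomes $K' \otimes_K M_\infty(L_K(E)) \cong K' \otimes_K L_K(SE) \cong L_{K'}(SE) \cong M_\infty(L_{K'}(E))$. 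You instead prove the base-change isomorphism for $E$ itself (your universal-property-plus-graded-uniqueness argument is exactly the content of the cited \cite[Theorem~8.1]{TomCoeffRing}) and then commute $K' \otimes_K (-)$ past $M_\infty$ via the identity $M_\infty(A) \cong A \otimes_K M_\infty(K)$ and associativity of the tensor product. What the paper's route buys is that it never has to verify that stabilization commutes with base change for nonunital algebras --- that bookkeeping is absorbed into the graph $SE$; what your route buys is self-containedness and the avoidance of the stabilized-graph machinery altogether. The second half of your argument (Morita equivalence of rings with countable sets of local units is equivalent to isomorphism of the $M_\infty$-stabilizations, via \cite[Corollary~9.11]{AT1} or \cite{AAM}, then tensor the isomorphism up to $K'$) is the same as the paper's. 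One shared gloss, which I mention only because it affects both arguments equally: the stabilization theorem produces a \emph{ring} isomorphism $M_\infty(L_K(E)) \cong M_\infty(L_K(F))$, and applying the functor $K' \otimes_K (-)$ strictly requires $K$-linearity of that isomorphism; the paper elides this point just as you do, and your suggested alternative of transporting a surjective Morita context of $K$-bimodules through base change is one clean way to sidestep it.
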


\begin{proof}
Let $SE$ be the stabilized graph of $E$ formed by attaching an infinite head to each vertex of $E$ (see \cite[Definition~9.1 and Definition~9.4]{AT1}.  By \cite[Proposition~9.8]{AT1}, we have ${M}_{\infty} ( L_{K'} (E) ) \cong L_{K'} (SE)$ (as $K'$-algebras) and ${M}_{\infty} ( L_{K} (E) ) \cong L_{K} (SE)$ (as $K$-algebras).  By \cite[Theorem~8.1]{TomCoeffRing}, $K' \otimes_{K}  L_{K}(SE) \cong L_{K'} (SE)$ as ($K'$-algebras).  Hence $$K' \otimes_{K} {M}_{\infty} ( L_{K}(E) ) \cong K' \otimes_{K}  L_{K}(SE) \cong L_{K'} (SE) \cong {M}_{\infty} (L_{K'}(E))$$ where all the isomorphisms are as $K$-algebras.

In addition, if $L_{K}(E)$ and $L_{K} (F)$ are Morita equivalent, then by \cite[Corollary~9.11]{AT1}, we have that ${M}_{\infty} (L_{K} (E) ) \cong {M}_{\infty} (L_{K}(F) )$ (as rings).  Therefore, using the result from the previous paragraph we have
\begin{align*}
{M}_{\infty} (L_{K'}(E) ) \cong K' \otimes_{K} {M}_{\infty} ( L_{K}(E) ) \cong K' \otimes_{K} {M}_{\infty} ( L_{K}(F) ) \cong {M}_{\infty} (L_{K'}(F) )
\end{align*}
so that ${M}_{\infty} (L_{K'}(E) ) \cong {M}_{\infty} (L_{K'}(F) )$ as rings.  It follows from \cite[Corollary~9.11]{AT1} that $L_{K'} (E)$ is Morita equivalent to $L_{K'}(F)$.
\end{proof}

\begin{corollary} \label{same-number-singular-cor}
If $E$ and $F$ are graphs with a finite number of vertices, $K$ is a field, and $L_K(E)$ is Morita equivalent to $L_K(F)$, then $|E^0_\textnormal{sing}| = |F^0_\textnormal{sing}|$.
\end{corollary}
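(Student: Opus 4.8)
The plan is to reduce to the case of an algebraically closed field, where the number of singular vertices can be recovered from the algebraic $K$-theory via Corollary~\ref{K1-tell-number-singular-cor}. First I would recall that algebraic $K$-theory is a Morita invariant, so the hypothesis that $L_K(E)$ is Morita equivalent to $L_K(F)$ already yields $K_0^\textnormal{alg}(L_K(E)) \cong K_0^\textnormal{alg}(L_K(F))$ and $K_1^\textnormal{alg}(L_K(E)) \cong K_1^\textnormal{alg}(L_K(F))$. The difficulty is that this is exactly statement $(3)$ of Corollary~\ref{K1-tell-number-singular-cor}, and the implication $(3) \implies (2)$ there (which is what would give $|E^0_\textnormal{sing}| = |F^0_\textnormal{sing}|$) is only available when the field has no free quotients; an arbitrary field $K$ need not have this property, as Proposition~\ref{Q-times-Z-inf-prop} shows for $\Q$.

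The key step is to pass to the algebraic closure $\overline{K}$ of $K$. Since $\overline{K}$ is algebraically closed, Proposition~\ref{nfg-field-examples} guarantees that $\overline{K}$ is a field with no free quotients. Applying Lemma~\ref{l:fieldextension} to the field extension $K \subseteq \overline{K}$, the Morita equivalence of $L_K(E)$ and $L_K(F)$ passes to a Morita equivalence of $L_{\overline{K}}(E)$ and $L_{\overline{K}}(F)$. This is precisely where the field-extension lemma does the essential work: it lets us trade the arbitrary field $K$ for a well-behaved one without losing the Morita equivalence.

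With this reduction, I would invoke Corollary~\ref{K1-tell-number-singular-cor} over the field $\overline{K}$. The Morita equivalence of $L_{\overline{K}}(E)$ and $L_{\overline{K}}(F)$ gives $K_0^\textnormal{alg}(L_{\overline{K}}(E)) \cong K_0^\textnormal{alg}(L_{\overline{K}}(F))$ and $K_1^\textnormal{alg}(L_{\overline{K}}(E)) \cong K_1^\textnormal{alg}(L_{\overline{K}}(F))$, which is statement $(3)$ of that corollary. Because $\overline{K}$ has no free quotients, the implication $(3) \implies (2)$ applies, and statement $(2)$ yields $|E^0_\textnormal{sing}| = |F^0_\textnormal{sing}|$, as desired. (Alternatively, one could avoid quoting $(3) \implies (2)$ directly and instead combine Theorem~\ref{LPA-K-implies-C-star-K-thm} with the rank formula $|E^0_\textnormal{sing}| = \rank K_0^\textnormal{top}(C^*(E)) - \rank K_1^\textnormal{top}(C^*(E))$ of Corollary~\ref{sing-vertices-SME-inv-cor}.) There is no serious obstacle once the machinery is in place; the only genuinely nontrivial ingredient is the passage to $\overline{K}$, and its necessity is underscored by the fact that over $\Q$ the number of singular vertices is \emph{not} determined by $K_0^\textnormal{alg}$ and $K_1^\textnormal{alg}$ alone, as the counterexamples in Section~\ref{example-sec} will show.
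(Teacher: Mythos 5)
Your argument is correct and is essentially identical to the paper's proof: pass to the algebraic closure $\overline{K}$ via Lemma~\ref{l:fieldextension}, note that $\overline{K}$ has no free quotients by Proposition~\ref{nfg-field-examples}, use Morita invariance of algebraic $K$-theory, and conclude with the implication $(3) \implies (2)$ of Corollary~\ref{K1-tell-number-singular-cor}. No gaps.
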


\begin{proof}
Let $\overline{K}$ be the algebraic closure of $K$.  Since $L_K(E)$ is Morita equivalent to $L_K(F)$, Lemma~\ref{l:fieldextension} implies that $L_{\overline{K}}(E)$ is Morita equivalent to $L_{\overline{K}}(F)$. Because algebraic $K$-theory is a Morita equivalence invariant, we have $K_n^\textnormal{alg} (L_{\overline{K}}(E)) \cong K_n^\textnormal{alg} (L_{\overline{K}}(F))$ for $n  = 0,1$.  Since $\overline{K}$ is algebraically closed, Proposition~\ref{nfg-field-examples} implies that $\overline{K}$ is a field with no free quotients.  The result then follows from $(3) \implies (2)$ of  Corollary~\ref{K1-tell-number-singular-cor}. 
\end{proof}

In \cite[Theorem~8.6 and Corollary~9.16]{AT1} the Isomorphism Conjecture and Morita Equivalence Conjecture are established for Leavitt path algebras over the field $\C$ of complex numbers.  Theorem~\ref{LPA-K-implies-C-star-K-thm} now allows us to verify these conjectures for Leavitt path algebras over arbitrary fields.

\begin{proposition} \label{Iso-Mor-Conj-over-K-prop}
Let $E$ and $F$ be simple graphs, and let $K$ be a field.
\begin{itemize}
\item[(1)] If $L_K(E) \cong L_K(F)$ (as rings), then $C^*(E) \cong C^*(F)$ (as $*$-algebras).
\item[(2)] If $L_K(E)$ is Morita equivalent to $L_K(F)$, then $C^*(E)$ is strongly Morita equivalent to $C^*(F)$.
\end{itemize}
\end{proposition}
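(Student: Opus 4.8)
The plan is to reduce each statement to a matching of \emph{topological} $K$-theory and then invoke the classification of simple graph $C^*$-algebras; I would prove Part~(2) first and use it inside Part~(1). For Part~(2), suppose $L_K(E)$ is Morita equivalent to $L_K(F)$ and let $\overline{K}$ be an algebraic closure of $K$. By Lemma~\ref{l:fieldextension}, $L_{\overline{K}}(E)$ is Morita equivalent to $L_{\overline{K}}(F)$, and since ordered $K_0^\textnormal{alg}$ and $K_1^\textnormal{alg}$ are Morita invariants we obtain an order isomorphism $K_0^\textnormal{alg}(L_{\overline{K}}(E)) \cong K_0^\textnormal{alg}(L_{\overline{K}}(F))$ and an isomorphism $K_1^\textnormal{alg}(L_{\overline{K}}(E)) \cong K_1^\textnormal{alg}(L_{\overline{K}}(F))$. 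As $\overline{K}$ is algebraically closed, Proposition~\ref{nfg-field-examples} shows it has no free quotients, so Theorem~\ref{LPA-K-implies-C-star-K-thm} produces an order isomorphism $K_0^\textnormal{top}(C^*(E)) \cong K_0^\textnormal{top}(C^*(F))$ and an isomorphism $K_1^\textnormal{top}(C^*(E)) \cong K_1^\textnormal{top}(C^*(F))$. Because pure infiniteness is a Morita invariant, and a simple Leavitt path algebra (resp.\ simple graph $C^*$-algebra) is purely infinite exactly when the graph contains a cycle, the algebras $C^*(E)$ and $C^*(F)$ are either both purely infinite or both AF. In either case the matching of ordered $K_0^\textnormal{top}$ and of $K_1^\textnormal{top}$ gives strong Morita equivalence: via the Kirchberg-Phillips theorem in the purely infinite case and Elliott's classification of AF algebras in the stably finite case, using that every graph $C^*$-algebra is separable, nuclear, and satisfies the UCT.

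For Part~(1), suppose $\Phi \colon L_K(E) \to L_K(F)$ is a ring isomorphism. Then $\Phi$ induces an order isomorphism $\Phi_* \colon K_0^\textnormal{alg}(L_K(E)) \to K_0^\textnormal{alg}(L_K(F))$, and in the unital case (equivalently, $E^0$ and $F^0$ finite) it sends $[1_{L_K(E)}]_0$ to $[1_{L_K(F)}]_0$. Composing with the canonical order isomorphisms of Proposition~\ref{K-theory-comp-prop}, which send $[v]_0 \mapsto [p_v]_0$ and hence the unit class $\sum_{v} [v]_0$ to $\sum_{v} [p_v]_0$, yields an order isomorphism $K_0^\textnormal{top}(C^*(E)) \to K_0^\textnormal{top}(C^*(F))$ carrying $[1_{C^*(E)}]_0$ to $[1_{C^*(F)}]_0$; this uses only Part~(1) of Theorem~\ref{LPA-K-implies-C-star-K-thm}, which is valid over every field. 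Since an isomorphism is in particular a Morita equivalence, Part~(2) shows $C^*(E)$ is strongly Morita equivalent to $C^*(F)$, whence $K_1^\textnormal{top}(C^*(E)) \cong K_1^\textnormal{top}(C^*(F))$, and as above the two algebras are of the same type. The matching of the ordered pointed group $(K_0^\textnormal{top}, K_0^{\textnormal{top},+}, [1]_0)$ together with the matching of $K_1^\textnormal{top}$ then gives $C^*(E) \cong C^*(F)$ by the unital Kirchberg-Phillips theorem (purely infinite case) or Elliott's classification (AF case), with the non-unital case handled by the analogous unpointed invariant.

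The main obstacle is the transfer of the $K_1$-data in Part~(1) over an arbitrary field. The reduction to $\overline{K}$ in Lemma~\ref{l:fieldextension} preserves only Morita equivalence, not isomorphism, so a ring isomorphism over $K$ cannot simply be tensored up to $\overline{K}$ to feed Theorem~\ref{LPA-K-implies-C-star-K-thm} directly; in particular there is no mechanism to extract a $K_1^\textnormal{top}$ isomorphism over a field with free quotients. The resolution is to obtain the ordered $K_0^\textnormal{top}$ isomorphism (with its unit class) directly over $K$, where Part~(1) of Theorem~\ref{LPA-K-implies-C-star-K-thm} imposes no condition on the field, and to recover the $K_1^\textnormal{top}$ isomorphism indirectly from the strong Morita equivalence furnished by Part~(2). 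Some care is also required to check that the canonical identifications of Proposition~\ref{K-theory-comp-prop} preserve the class of the identity, since this is precisely what allows the \emph{unital} classification theorems to apply.
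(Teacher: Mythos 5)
Your proof is correct and follows essentially the same route as the paper: both arguments transfer the ordered (pointed) $K_0$-data directly via Proposition~\ref{K-theory-comp-prop}, obtain the $K_1^\textnormal{top}$-isomorphism by passing to the algebraic closure with Lemma~\ref{l:fieldextension} and Theorem~\ref{LPA-K-implies-C-star-K-thm}, and then invoke Elliott's theorem in the ultramatricial case and Kirchberg--Phillips in the purely infinite case. Your reorganization (proving (2) first and extracting the $K_1^\textnormal{top}$-isomorphism in (1) from the resulting strong Morita equivalence) is only a cosmetic variation on the paper's argument.
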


\begin{proof}
Since $E$ and $F$ are simple graphs, the dichotomy for Leavitt path algebras implies that $L_K(E)$ and $L_K(F)$ are either both ultramatricial or both purely infinite.  If $L_K(E)$ and $L_K(F)$ are ultramatricial, then since Proposition~\ref{K-theory-comp-prop} implies that $K_0^\textnormal{alg} (L_K(E)) \cong K_0^\textnormal{top}(C^*(E))$ as scaled ordered groups for any graph $E$, the results in (1) and (2) follow from Elliott's classification theorem \cite{Ell2}.  

Suppose $L_K(E)$ and $L_K(F)$ are both purely infinite.  Proposition~\ref{K-theory-comp-prop} shows that for any graph $E$ one has $K_0^\textnormal{alg} (L_K(E)) \cong K_0^\textnormal{top} (C^*(E))$.  Thus, if $L_K(E)$ and $L_K(F)$ are Morita equivalent, it follows that $K_0^\textnormal{alg} (L_K(E)) \cong K_0^\textnormal{alg} (L_K(F))$, and hence $K_0^\textnormal{top} (C^*(E)) \cong K_0^\textnormal{top} (C^*(F))$.  Moreover, $L_K(E) \cong L_K(F)$ implies that $L_K(E)$ and $L_K(F)$ are either both unital or both nonunital.  The Leavitt path algebras $L_K(E)$ and $L_K(F)$ are both unital if and only if the graph $C^*$-algebra $C^*(E)$ and $C^*(F)$ are both unital (which occurs precisely when $E^0$ is a finite set), and in this case, Proposition~\ref{K-theory-comp-prop} shows that  $K_0^\textnormal{alg} (L_K(E)) \cong K_0^\textnormal{top} (C^*(E))$ via an isomorphism taking the class of the identity of $C^*(E)$ to the class of the identity of $C^*(F)$.

Furthermore, if $L_K(E)$ is Morita equivalent to $L_K(F)$, let $\overline{K}$ be the algebraic closure of $K$.  Lemma~\ref{l:fieldextension} implies $L_{\overline{K}}(E)$ is Morita equivalent to $L_{\overline{K}}(F)$, so that $K_1^\textnormal{alg} (L_{\overline{K}}(E)) \cong K_1^\textnormal{alg} (L_{\overline{K}}(F))$.  Because $\overline{K}$ is algebraically closed, Proposition~\ref{nfg-field-examples} shows that $\overline{K}$ is a field with no free quotients, and hence Theorem~\ref{LPA-K-implies-C-star-K-thm} implies that $K_1^\textnormal{top} (C^*(E)) \cong K_1^\textnormal{top} (C^*(F))$.

Thus the previous paragraphs yield two conclusions: First, if $L_K(E) \cong L_K(F)$, then we have $K_0^\textnormal{top} (C^*(E)) \cong K_0^\textnormal{top} (C^*(F))$ via an isomorphism that may be chosen to take the class of the unit to the class of the unit when $C^*(E)$ (and equivalently $C^*(F)$) is unital, and $K_1^\textnormal{top} (C^*(E)) \cong K_1^\textnormal{top} (C^*(F))$.  Second, if $L_K(E)$ is Morita equivalent to $L_K(F)$, then $K_0^\textnormal{top} (C^*(E)) \cong K_0^\textnormal{top} (C^*(F))$ and $K_1^\textnormal{top} (C^*(E)) \cong K_1^\textnormal{top} (C^*(F))$.  Since $L_K(E)$ and $L_K(F)$ are purely infinite and simple, (1) and (2) in the statement of the proposition follow from the Kirchberg-Phillips classification theorem.
\end{proof}

\section{Classification of unital simple Leavitt path algebras of infinite graphs} \label{class-thm-sec}

In this section we discuss classification of unital simple Leavitt path algebras of infinite graphs.

\begin{theorem} \label{class-unital-inf-LPA-thm}
Let $E$ and $F$ be simple graphs (see Definition~\ref{simple-graphs-def}) that each have a finite number of vertices and an infinite number of edges.  If $K$ is a field with no free quotients (see Definition~\ref{nfq-group-def} and Definition~\ref{nfq-field-def}), then the following are equivalent:
\begin{itemize}
\item[(1)] $L_K(E)$ is Morita equivalent to $L_K(F)$.
\item[(2)] $K_0^\textnormal{alg} (L_K(E)) \cong K_0^\textnormal{alg} (L_K(F))$ and $K_1^\textnormal{alg} (L_K(E)) \cong K_1^\textnormal{alg} (L_K(F))$.
\item[(3)] $K_0^\textnormal{alg} (L_K(E)) \cong K_0^\textnormal{alg} (L_K(F))$ and $|E^0_\textnormal{sing}| = |F^0_\textnormal{sing}|$.
\item[(4)] $K_0^\textnormal{top} (C^*(E)) \cong K_0^\textnormal{top} (C^*(F))$ and $K_1^\textnormal{top} (C^*(E)) \cong K_1^\textnormal{top} (C^*(F))$.
\item[(5)] $C^*(E)$ is strongly Morita equivalent to $C^*(F)$.
\item[(6)] $E \sim_M F$ (see Definition~\ref{move-equivalent-def}).
\item[(7)] There exists a finite sequence of simple graphs $E_0, E_1, \ldots, E_n$ and nonempty subsets of vertices $V_i \subseteq E_i^0$ and $W_i \subseteq E_{i+1}^0$ for each $0 \leq i \leq n-1$ such that $E_0 = E$, $E_n = F$, and $V_i L_K(E_i) V_i \cong W_i L_K(E_{i+1}) W_i$ for all $0 \leq i \leq n-1$.
\end{itemize}
\end{theorem}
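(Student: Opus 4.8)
The plan is to prove the cycle
$$(1) \Rightarrow (2) \Rightarrow (3) \Rightarrow (4) \Rightarrow (5) \Rightarrow (6) \Rightarrow (7) \Rightarrow (1),$$
supplemented by the reverse implications among the $K$-theoretic conditions that are already available. The backbone of the argument is a three-stage bridge from algebra to topology to dynamics and back: the hypothesis that $K$ has no free quotients promotes agreement of algebraic $K$-groups to agreement of topological $K$-groups, this feeds S\o rensen's classification of simple graph $C^*$-algebras to produce a concrete move equivalence of the graphs, and the move equivalence then descends to Morita equivalence of the Leavitt path algebras via the results of Section~\ref{moves-sec}.

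First I would dispose of the purely $K$-theoretic equivalences. Conditions (2), (3), and (4) are, after relabeling, exactly statements (3), (2), and (1) of Corollary~\ref{K1-tell-number-singular-cor}; since $K$ is a field with no free quotients, that corollary gives their mutual equivalence, so in particular $(2) \Leftrightarrow (3) \Leftrightarrow (4)$. The implication $(1) \Rightarrow (2)$ is immediate, being just the Morita invariance of the algebraic $K_0$- and $K_1$-groups. This settles the entire $K$-theoretic block and reduces the problem to connecting it with (5), (6), and (7).

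Next I would carry out the passage through $C^*$-algebras and symbolic dynamics. Because $E$ and $F$ each have finitely many vertices but infinitely many edges, they each contain an infinite emitter, so Proposition~\ref{inf-emitter-implies-pi-prop} guarantees the associated algebras are purely infinite and simple. Condition (4) is precisely the hypothesis of S\o rensen's theorem, giving $(4)\Rightarrow(5)$, and the ``moreover'' clause of that theorem supplies a transformation of $E$ into $F$ using only the basic moves not requiring the Cuntz splice, namely (S), (O), (I), (R); this yields $(5)\Rightarrow(6)$, i.e.\ $E \sim_M F$, with all intermediate graphs simple since $\mathcal{S}$ is closed under these moves. Then $(6)\Rightarrow(7)$ is exactly Theorem~\ref{moves-imply-concrete-ME-thm}, which records the concrete full-corner isomorphisms realizing move equivalence at the level of Leavitt path algebras. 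Finally, for $(7)\Rightarrow(1)$ I would note that since each $E_i$ is simple, $L_K(E_i)$ is simple, so every nonzero corner is automatically full: $\Sigma(V_i)$ is a nonempty saturated hereditary set and hence equals $E_i^0$. Lemma~\ref{corner-ME-lem} then makes each corner Morita equivalent to its ambient algebra, and chaining the isomorphisms $V_i L_K(E_i) V_i \cong W_i L_K(E_{i+1}) W_i$ produces a Morita equivalence $L_K(E) \sim L_K(F)$, closing the cycle.

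The main obstacle is the step that manufactures the move equivalence: there is no direct algebraic construction of it, and we are forced to route through topological $K$-theory and rely on S\o rensen's deep $C^*$-algebraic classification to supply the graph moves. The no-free-quotients hypothesis is doing precisely the work of making Theorem~\ref{LPA-K-implies-C-star-K-thm} applicable, so that the algebraic $K_1$-data determines the topological $K_1$-data and S\o rensen's invariant can be matched. A secondary point demanding care is verifying that the moves S\o rensen employs coincide with (S), (O), (I), (R) as formalized here, and that the fullness of the corners appearing in (7) is genuinely automatic from simplicity rather than an extra assumption.
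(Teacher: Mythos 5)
Your proposal is correct and follows essentially the same route as the paper's own proof: the identical cycle $(1)\Rightarrow(2)\Rightarrow(3)\Rightarrow(4)\Rightarrow(5)\Rightarrow(6)\Rightarrow(7)\Rightarrow(1)$, with Corollary~\ref{K1-tell-number-singular-cor} handling the $K$-theoretic block, S\o rensen's theorem supplying $(4)\Rightarrow(5)\Rightarrow(6)$, Theorem~\ref{moves-imply-concrete-ME-thm} for $(6)\Rightarrow(7)$, and simplicity forcing fullness of corners for $(7)\Rightarrow(1)$. No substantive differences.
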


\begin{proof}
Since $E$ and $F$ are graphs that each have an infinite number of edges and a finite number of vertices, it follows that each of $E$ and $F$ have an infinite emitter.  Proposition~\ref{inf-emitter-implies-pi-prop} implies that $C^*(E)$ and $C^*(F)$ are purely infinite simple $C^*$-algebras, and that $L_K(E)$ and $L_K(F)$ are purely infinite simple rings.

$(1) \implies (2)$.  If $L_K(E)$ is Morita equivalent to $L_K(F)$, then since algebraic $K$-theory is a Morita equivalence invariant, $(2)$ follows.

$(2) \implies (3)$ and $(3) \implies (4)$.  These implications follow from Corollary~\ref{K1-tell-number-singular-cor}. (We mention that we need the hypothesis that $K$ is a field with no free quotients to obtain the implication $(2) \implies (3)$.)

$(4) \implies (5)$ and $(5) \implies (6)$. These implications follow from \cite[Theorem~4.8]{Sor}.  

$(6) \implies (7)$. This follows from Theorem~\ref{moves-imply-concrete-ME-thm}.

$(7) \implies (1)$. Since each $E_i$ is a simple graph, each Leavitt path algebra $L_K(E_i)$ is a simple algebra.  Thus any nonzero corner of $L_K(E_i)$ is full, and $L_K(E_i)$ is Morita equivalent to $L_K(E_{i+1})$ for all $1 \leq i \leq n-1$.  Consequently, $L_K(E)$ is Morita equivalent to $L_K(F)$.
\end{proof}

\begin{remark}
We point out that our proof of Theorem~\ref{class-unital-inf-LPA-thm} did not require the Kirchberg-Phillips classification theorem, which is a deep result with a lengthy proof.  Instead we established all the implications using results from this paper and from \cite[Theorem~4.8]{Sor}.  (A self-contained proof of \cite[Theorem~4.8]{Sor} appears in \cite[Section~9]{Sor}.)
\end{remark}

\begin{corollary} \label{no-free-quot-ME-implies-all-ME-cor}
Let $E$ and $F$ be simple graphs that each have a finite number of vertices and an infinite number of edges.  If there exists a field $K$ such that $K$ has no free quotients and $L_{K}(E)$ is Morita equivalent to $L_{K}(F)$, then $L_{K'}(E)$ is Morita equivalent to $L_{K'}(F)$ for every field $K'$.
\end{corollary}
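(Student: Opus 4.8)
The plan is to route the argument through the purely combinatorial relation $\sim_M$ of move equivalence, which has the crucial feature of being independent of the coefficient field. The hypotheses of the corollary---that $E$ and $F$ are simple graphs with finitely many vertices and infinitely many edges, and that $K$ has no free quotients---are precisely those required to invoke Theorem~\ref{class-unital-inf-LPA-thm}. So the first step is to feed the assumed Morita equivalence of $L_K(E)$ and $L_K(F)$ (statement (1) of that theorem) into its equivalence with statement (6), concluding that $E \sim_M F$.

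Once $E \sim_M F$ is established, the conclusion no longer mentions $K$ at all: it is a statement about how the graph $E$ can be transformed into $F$ by the moves (S), (O), (I), (R) and their inverses. The second step is therefore to apply Theorem~\ref{moves-imply-concrete-ME-thm}, which asserts that move equivalence of graphs implies Morita equivalence of the associated Leavitt path algebras over \emph{any} field. Applying this with the arbitrary field $K'$ in place of $K$ yields that $L_{K'}(E)$ is Morita equivalent to $L_{K'}(F)$, as desired.

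The substance of the argument lies entirely in the no-free-quotients hypothesis, which is what licenses the implication $(1) \implies (6)$ of Theorem~\ref{class-unital-inf-LPA-thm}; without it one can only extract the weaker $K_0^\textnormal{alg}$-data, and $E \sim_M F$ need not follow (indeed the examples of Section~\ref{example-sec} show this can fail over $\Q$). I expect no real obstacle beyond correctly citing this equivalence, since move equivalence is manifestly field-free and Theorem~\ref{moves-imply-concrete-ME-thm} transports it to every field. It is worth noting that the field-extension result Lemma~\ref{l:fieldextension} does not suffice on its own here: it propagates Morita equivalence only along extensions $K \subseteq K'$, whereas the corollary asserts the conclusion for every field $K'$, including fields that are neither subfields nor extensions of $K$ and may have different characteristic.
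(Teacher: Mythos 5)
Your proposal is correct and follows exactly the paper's own argument: apply the equivalence $(1)\iff(6)$ of Theorem~\ref{class-unital-inf-LPA-thm} to conclude $E \sim_M F$, then invoke Theorem~\ref{moves-imply-concrete-ME-thm} to transfer the Morita equivalence to an arbitrary field $K'$. Your closing remark about why Lemma~\ref{l:fieldextension} alone would not suffice is a accurate observation, though not needed for the proof.
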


\begin{proof}
Since $K$ is a field with no free quotients and $L_{K}(E)$ is Morita equivalent to $L_{K}(F)$, it follows from Theorem~\ref{class-unital-inf-LPA-thm} that $E \sim_M F$.  Therefore, Theorem~\ref{moves-imply-concrete-ME-thm} implies that $L_{K'}(E)$ is Morita equivalent to $L_{K'}(F)$ for every field $K'$.
\end{proof}

Theorem~\ref{class-unital-inf-LPA-thm} and Corollary~\ref{no-free-quot-ME-implies-all-ME-cor} both have the hypothesis that $K$ is a field with no free quotients.  However, from these two results we obtain the following corollary, which gives a classification for unital Leavitt path algebras of infinite graphs over arbitrary fields.

\begin{theorem} \label{field-does-not-matter-thm}
Let $E$ and $F$ be simple graphs that each have a finite number of vertices and an infinite number of edges.  If $K$ is any field, then the following are equivalent:
\begin{itemize}
\item[(a)] $L_K(E)$ is Morita equivalent to $L_K(F)$.
\item[(b)] $K_0^\textnormal{alg} (L_K(E)) \cong K_0^\textnormal{alg} (L_K(F))$ and $|E^0_\textnormal{sing}| = |F^0_\textnormal{sing}|$.
\item[(c)] $L_{\overline{K}} (E)$ is Morita equivalent to $L_{\overline{K}}(F)$, where $\overline{K}$ denotes the algebraic closure of $K$.
\item[(d)] $L_{K'} (E)$ is Morita equivalent to $L_{K'}(F)$ for every field $K'$.
\item[(e)] $K_0^\textnormal{top} (C^*(E)) \cong K_0^\textnormal{top} (C^*(F))$ and $K_1^\textnormal{top} (C^*(E)) \cong K_1^\textnormal{top} (C^*(F))$.
\item[(f)] $C^*(E)$ is strongly Morita equivalent to $C^*(F)$.
\item[(g)] $E \sim_M F$ (see Definition~\ref{move-equivalent-def}).
\item[(h)] There exists a finite sequence of simple graphs $E_0, E_1, \ldots, E_n$ and nonempty subsets of vertices $V_i \subseteq E_i^0$ and $W_i \subseteq E_{i+1}^0$ for each $0 \leq i \leq n-1$ such that $E_0 = E$, $E_n = F$, and $V_i L_K(E_i) V_i \cong W_i L_K(E_{i+1}) W_i$ for all $0 \leq i \leq n-1$.
\end{itemize}
\end{theorem}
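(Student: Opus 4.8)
The plan is to bootstrap the entire equivalence from Theorem~\ref{class-unital-inf-LPA-thm}, which already establishes the analogous statements under the extra hypothesis that the field has no free quotients. Two facts make this possible. First, the algebraic closure $\overline{K}$ is \emph{always} a field with no free quotients (Proposition~\ref{nfg-field-examples}), so Theorem~\ref{class-unital-inf-LPA-thm} applies verbatim to $\overline{K}$. Second, several of the conditions in the statement are insensitive to the choice of field: by Proposition~\ref{K-theory-comp-prop} the group $K_0^\textnormal{alg}(L_K(E))$ is the cokernel of an integer matrix depending only on the vertex matrix of $E$, so it (together with its positive cone) is independent of $K$; and the relation $E \sim_M F$ of condition~(g) is a purely graph-theoretic statement with no reference to a field. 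The strategy is therefore to route every implication either through $\overline{K}$ or through the field-independent condition~(g).

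First I would record the output of applying Theorem~\ref{class-unital-inf-LPA-thm} to $\overline{K}$. Since $\overline{K}$ has no free quotients, that theorem directly gives $(c) \iff (e) \iff (f) \iff (g)$, because (c) is exactly statement~(1) of that theorem for the field $\overline{K}$, while (e), (f), (g) are its statements (4), (5), (6). Next, because $K_0^\textnormal{alg}$ does not depend on the field, condition~(b) over $K$ is \emph{identical} to statement~(3) of Theorem~\ref{class-unital-inf-LPA-thm} taken over $\overline{K}$; hence $(b) \iff (c)$ as well. This already fuses (b), (c), (e), (f), (g) into a single equivalence class, and all the remaining work is to attach (a), (d), and (h).

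I would close the loop with short implications. For $(a) \implies (c)$: if $L_K(E)$ is Morita equivalent to $L_K(F)$, then Lemma~\ref{l:fieldextension} upgrades this to Morita equivalence over $\overline{K}$, which is exactly (c). For $(g) \implies (d)$: if $E \sim_M F$, then Theorem~\ref{moves-imply-concrete-ME-thm} makes $L_{K'}(E)$ Morita equivalent to $L_{K'}(F)$ for \emph{every} field $K'$, giving (d); and $(d) \implies (a)$ is the trivial specialization $K' = K$. For the concrete-corner statement, $(g) \implies (h)$ follows from Theorem~\ref{moves-imply-concrete-ME-thm} together with the fact that $\mathcal{S}$ is closed under (S), (O), (I), (R) and their inverses, so the intermediate graphs $E_i$ are again simple and the full corners produced furnish nonempty $V_i, W_i$ with $V_i L_K(E_i)V_i \cong W_i L_K(E_{i+1})W_i$. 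Finally $(h) \implies (a)$ repeats the argument from the proof of Theorem~\ref{class-unital-inf-LPA-thm}: each $L_K(E_i)$ is simple, so every nonzero corner is full and hence Morita equivalent to the whole algebra by Lemma~\ref{corner-ME-lem}, and chaining these isomorphisms shows $L_K(E)$ is Morita equivalent to $L_K(F)$.

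I do not expect a genuine obstacle, since all of the analytic and dynamical content has already been absorbed into Theorem~\ref{class-unital-inf-LPA-thm} (and ultimately into S\o rensen's result); the work here is organizational. The one delicate point is the direction $(a) \implies (c)$: passing Morita equivalence over a general field $K$ up to the algebraic closure is not automatic and relies essentially on Lemma~\ref{l:fieldextension} (equivalently, on the stabilization identity $K' \otimes_K M_\infty(L_K(E)) \cong M_\infty(L_{K'}(E))$), without which one could not invoke the no-free-quotients classification at all. Once that single bridge is in place, the field-independence of both $K_0^\textnormal{alg}$ and the move-equivalence relation $\sim_M$ carries out all the remaining transfer automatically.
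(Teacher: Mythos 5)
Your proposal is correct and follows essentially the same route as the paper's proof: both bootstrap from Theorem~\ref{class-unital-inf-LPA-thm} applied to $\overline{K}$ (using that an algebraically closed field has no free quotients), use Lemma~\ref{l:fieldextension} for $(a) \implies (c)$, and exploit the field-independence of $K_0^\textnormal{alg}$ and of $\sim_M$ to transfer the remaining conditions. The only cosmetic difference is that you obtain $(b) \iff (c)$ directly from the field-independence of $K_0^\textnormal{alg}$, whereas the paper routes $(b) \iff (e)$ through Corollary~\ref{K1-tell-number-singular-cor}; these rest on the same computation.
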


\begin{proof}
The implication $(a) \implies (c)$ follows from Lemma~\ref{l:fieldextension}.  The implication $(c) \implies (d)$ follows from Corollary~\ref{no-free-quot-ME-implies-all-ME-cor} and the fact that $\overline{K}$ is a field with no free quotients.  The implication $(d) \implies (a)$ is trivial.  The equivalences $(c) \iff (e) \iff (f) \iff (g) \iff (h)$ follow from Theorem~\ref{class-unital-inf-LPA-thm} and the fact that $\overline{K}$ is a field with no free quotients.  The equivalence $(b) \iff (e)$ follows from Corollary~\ref{K1-tell-number-singular-cor}.
\end{proof}

\begin{remark}
We mention that the implication $(f) \implies (d)$ of Theorem~\ref{field-does-not-matter-thm} gives a partial converse  to the implication in Proposition~\ref{Iso-Mor-Conj-over-K-prop}(2)
\end{remark}

\begin{remark}
If $E$ and $F$ are simple graphs that each have a finite number of vertices and an infinite number of edges, and we consider the statements (1)--(7) of Theorem~\ref{class-unital-inf-LPA-thm} for an arbitrary field $K$, then we have the following implications
$$(2) \Longleftarrow (1) \iff (3) \iff (4) \iff (5) \iff (6) \iff (7).$$
If, in addition, $K$ is a field with no free quotients, then we also have the implication $(2) \implies (1)$ and all seven statements are equivalent.  We will see later in Example~\ref{K-theory-counter-ex} that when $K = \Q$ (and hence $K$ has free quotients) it is possible for $(2)$ to hold while $(1)$ does not.
\end{remark}

The following corollary shows that if $E$ is a simple graph with a finite number of vertices and an infinite number of edges, then all of the algebraic $K$-groups $\{ K_n^\textnormal{alg} (L_K(E)) \}_{n \in \Z}$ are determined by the pair $(K_0^\textnormal{alg} (L_K(E)), |E^0_\textnormal{sing}|)$; and moreover, when $K$ is a field with no free quotients, all the algebraic $K$-groups are determined by  the pair $(K_0^\textnormal{alg} (L_K(E)), K_1^\textnormal{alg} (L_K(E)))$.

\begin{corollary} \label{K0-K1-determine-Kn-cor}
Let $E$ and $F$ be simple graphs that each have a finite number of vertices and an infinite number of edges, let $K$ be a field, and consider the following statements:
\begin{itemize}
\item[$(i)$] $K_0^\textnormal{alg} (L_K(E)) \cong K_0^\textnormal{alg} (L_K(F))$ and $|E^0_\textnormal{sing}| = |F^0_\textnormal{sing}|$. 
\item[$(ii)$] $K_n^\textnormal{alg} (L_K(E)) \cong K_n^\textnormal{alg} (L_K(F))$ for all $n \in \Z$.
\item[$(iii)$] $K_0^\textnormal{alg} (L_K(E)) \cong K_0^\textnormal{alg} (L_K(F))$ and $K_1^\textnormal{alg} (L_K(E)) \cong K_1^\textnormal{alg} (L_K(F))$.
\end{itemize}
In general, we have the following implications:
$$(i) \implies (ii) \implies (iii).$$
If, in addition, $K$ is a field with no free quotients, then the implication $(iii) \implies (i)$ also holds, so that all three statements are equivalent when $K$ is a field with no free quotients.
\end{corollary}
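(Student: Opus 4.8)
The plan is to verify the three implications separately, drawing on the classification results already established rather than attempting any direct computation of higher $K$-groups. The implication $(ii) \implies (iii)$ requires no work at all: statement $(iii)$ is simply the conjunction of the $n = 0$ and $n = 1$ cases of statement $(ii)$.

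For $(i) \implies (ii)$, the key observation is that statement $(i)$ is precisely condition $(b)$ of Theorem~\ref{field-does-not-matter-thm}, and that theorem---which applies over an arbitrary field $K$---shows condition $(b)$ is equivalent to condition $(a)$, namely that $L_K(E)$ is Morita equivalent to $L_K(F)$. Since algebraic $K$-theory in every degree is a Morita equivalence invariant, Morita equivalence of $L_K(E)$ and $L_K(F)$ immediately yields $K_n^\textnormal{alg}(L_K(E)) \cong K_n^\textnormal{alg}(L_K(F))$ for all $n \in \Z$, which is exactly statement $(ii)$.

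For the final implication $(iii) \implies (i)$ under the additional hypothesis that $K$ has no free quotients, I would appeal directly to Corollary~\ref{K1-tell-number-singular-cor}. Our statements $(i)$ and $(iii)$ are exactly conditions $(2)$ and $(3)$ of that corollary, whose conclusion records that $(3) \implies (2)$ holds whenever $K$ is a field with no free quotients. Note that Corollary~\ref{K1-tell-number-singular-cor} requires only that $E$ and $F$ have finitely many vertices, so its hypotheses are met here a fortiori.

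The noteworthy feature of this argument---and what I would flag as the conceptual heart rather than a genuine technical obstacle---is the implication $(i) \implies (ii)$. The data in $(i)$ is finitely computable (the $K_0^\textnormal{alg}$-group together with the count of singular vertices), yet it forces agreement of all the higher groups $K_n^\textnormal{alg}(L_K(E))$, which Proposition~\ref{K-theory-comp-prop} presents only through a long exact sequence and which we generally cannot evaluate in closed form for $n \geq 2$. The mechanism never touches those higher groups directly: it routes entirely through Morita equivalence, so the real weight of the corollary is carried by Theorem~\ref{field-does-not-matter-thm} and, ultimately, by S\o rensen's classification. The hypotheses that the graphs be simple with infinitely many edges are essential precisely because they are what make Theorem~\ref{field-does-not-matter-thm} available; without them the chain linking $(i)$ to Morita equivalence would break.
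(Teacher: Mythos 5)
Your proposal is correct and follows exactly the same route as the paper's proof: $(i)\implies(ii)$ via the equivalence $(a)\iff(b)$ of Theorem~\ref{field-does-not-matter-thm} together with Morita invariance of algebraic $K$-theory, $(ii)\implies(iii)$ trivially, and $(iii)\implies(i)$ via Corollary~\ref{K1-tell-number-singular-cor}. Nothing is missing.
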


\begin{proof}
The implication $(i) \implies (ii)$ follows from the equivalence $(a) \iff (b)$ of Theorem~\ref{field-does-not-matter-thm}, and the fact that algebraic $K$-theory is a Morita equivalence invariant.  The implication $(ii) \implies (iii)$ is trivial.  If $K$ is a field with no free quotients, then the implication $(iii) \implies (i)$ follows from Corollary~\ref{K1-tell-number-singular-cor}.
\end{proof}

\begin{remark} \label{Kn-groups-unknown-rem}
If $E$ and $F$ are simple graphs that each have a finite number of vertices and an infinite number of edges, and if $(i)$, $(ii)$, and $(iii)$ are the statements of Corollary~\ref{K0-K1-determine-Kn-cor}, we will see later in Example~\ref{K-theory-not-determined-ex} that when $K = \Q$ (and hence $K$ has free quotients) it is possible for $(iii)$ to hold when $(ii)$ does not, and thus $(iii) \centernot \implies (ii)$ in general.  It is unknown to the authors at this time whether  the hypothesis that $K$ has no free quotients is necessary to obtain $(ii)$ implies $(i)$, or whether this happens for any field $K$.
\end{remark}

\section{Classification of unital simple Leavitt path algebras} \label{unital-class-sec}

In this section we discuss how our classifications from the previous section fit into the program to classify the unital simple Leavitt path algebras up to Morita equivalence.   We pay particular attention to the invariant used, and discuss how the number of singular vertices must be included in the invariant when the underlying field $K$ has free quotients.

It follows from the dichotomy for simple Leavitt path algebras  that if $L_K(E)$ is simple, then $L_K(E)$ is either purely infinite (if $E$ contains a cycle) or $L_K(E)$ is ultramatricial (if $E$ contains no cycles).  We mention that whether $L_K(E)$ is purely infinite or ultramatricial can also be read off from the $K_0$-group: $K_0^\textnormal{alg,+} (L_K(E)) = K_0^\textnormal{alg} (L_K(E))$ if and only if $L_K(E)$ is purely infinite.  Thus if we consider the $K_0^\text{alg}$-group to include its natural ordering (i.e., use the pair $(K_0^\textnormal{alg} (L_K(E)), K_0^\textnormal{alg,+} (L_K(E)) )$) in the invariant, then the invariant determines whether $L_K(E)$ is purely infinite or ultramatricial.

The situation when $L_K(E)$ is ultramatricial is fairly trivial, as the following lemma shows.

\begin{lemma}
If $E$ is a unital simple Leavitt path algebra that is ultramatricial, then $L_K(E) \cong M_n(K)$ for some $n \in \N$, and $L_K(E)$ is Morita equivalent to $K$.
\end{lemma}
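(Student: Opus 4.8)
The plan is to use the ultramatricial hypothesis to reduce to a finite acyclic graph possessing a sink, realize $K$ as a full corner of $L_K(E)$ sitting at that sink, and then promote the resulting Morita equivalence to an isomorphism with a matrix algebra. First I would unwind the hypotheses. Since $L_K(E)$ is ultramatricial, the dichotomy recorded at the start of this section shows that $E$ has no cycles, and since $L_K(E)$ is unital the vertex set $E^0$ is finite. By the contrapositive of Proposition~\ref{inf-emitter-implies-pi-prop}, a simple graph with no cycle has no infinite emitter, so $E$ is a finite row-finite graph and in particular $L_K(E)$ is finite-dimensional over $K$. Following edges from any vertex produces a path that cannot repeat a vertex (acyclicity), so, $E^0$ being finite, it must terminate at a sink $w$. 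This sink is moreover unique: if $w'$ were a second sink, condition~(3) of Definition~\ref{simple-graphs-def} would supply a path from the vertex $w$ to the singular vertex $w'$, which is impossible because the sink $w$ emits no edges.

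Next I would examine the corner $w L_K(E) w$. Because $w$ is a sink, the only path with source $w$ is the trivial path $w$, so in the spanning description $L_K(E) = \operatorname{span}_K\{\alpha\beta^* : r(\alpha)=r(\beta)\}$ the only terms surviving multiplication by $w$ on both sides are scalar multiples of $w w w^* w = w$. Hence $w L_K(E) w = K w \cong K$ as $K$-algebras, with $w$ as the identity. I would then verify that this corner is full, i.e.\ that $\Sigma(\{w\}) = E^0$. The saturation $\Sigma(\{w\})$ is a saturated hereditary subset containing $w$, hence nonempty; since $L_K(E)$ is simple, its only saturated hereditary subsets are $\emptyset$ and $E^0$, so $\Sigma(\{w\}) = E^0$. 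With fullness established, Lemma~\ref{corner-ME-lem} yields that $L_K(E)$ is Morita equivalent to $w L_K(E) w \cong K$.

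Finally I would upgrade this Morita equivalence to an explicit isomorphism using the standard classification of rings Morita equivalent to a field: a unital ring $R$ is Morita equivalent to $K$ exactly when $R \cong \operatorname{End}_K(P)$ for a progenerator $K$-module $P$, and over a field such a $P$ is simply a nonzero finite-dimensional vector space, say of dimension $n \geq 1$, whence $R \cong \operatorname{End}_K(K^n) = M_n(K)$. Taking $R = L_K(E)$ gives the required $n \in \N$ with $L_K(E) \cong M_n(K)$, and the Morita equivalence with $K$ was obtained in the previous step. (Alternatively, since $L_K(E)$ is a finite-dimensional simple $K$-algebra, Artin--Wedderburn gives $L_K(E) \cong M_n(D)$ for a division ring $D$, and Morita equivalence with the field $K = M_1(K)$ forces $D \cong K$.)

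The step I would watch most carefully is the fullness of the corner, since this is where the global hypothesis of simplicity is genuinely consumed; the cleanest justification rests on the order-isomorphism between the graded ideals of $L_K(E)$ and the saturated hereditary subsets of $E^0$, so that a proper nonempty saturated hereditary set would produce a proper nonzero graded ideal, contradicting simplicity. One could instead argue fullness directly: a vertex of $E^0 \setminus \Sigma(\{w\})$ emitting no edge back into $E^0 \setminus \Sigma(\{w\})$ — one exists because the induced subgraph there is finite and acyclic — would be a regular vertex (as $w$ is the only sink and $E$ has no infinite emitters) all of whose out-edges land in $\Sigma(\{w\})$, and would therefore already lie in $\Sigma(\{w\})$ by saturation, a contradiction. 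Everything else is a direct computation or a citation of Lemma~\ref{corner-ME-lem} and standard Morita theory.
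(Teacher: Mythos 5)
Your argument is correct. The graph-theoretic reduction is exactly the paper's: ultramatriciality gives acyclicity, unitality gives finitely many vertices, the contrapositive of Proposition~\ref{inf-emitter-implies-pi-prop} rules out infinite emitters, finiteness plus acyclicity produces a sink, and condition~(3) of Definition~\ref{simple-graphs-def} forces that sink to be unique. Where you diverge is in the final step. The paper, having reduced to a finite acyclic graph with exactly one sink, simply invokes the standard structure result that such a Leavitt path algebra is $M_n(K)$ (concretely, $\alpha\beta^* \mapsto e_{\alpha,\beta}$ with $n$ the number of paths ending at the sink), and the Morita equivalence with $K$ falls out afterwards. You reverse the order: you first exhibit $K$ as the full corner $wL_K(E)w = Kw$ at the sink, apply Lemma~\ref{corner-ME-lem} to get the Morita equivalence, and only then recover $L_K(E) \cong M_n(K)$ abstractly from Morita theory (or Artin--Wedderburn). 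Your route stays closer to the machinery the paper has already set up --- fullness of corners and Lemma~\ref{corner-ME-lem} --- at the cost of a small amount of general ring theory; the paper's route is shorter but leans on the explicit matrix-unit description of $L_K$ of an acyclic graph. Your two justifications of fullness (via simplicity and the ideal correspondence, or directly by finding a ``sink of the complement'' that saturation would absorb) are both sound, and the direct one is the more self-contained since it uses only the uniqueness of the sink and the absence of infinite emitters.
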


\begin{proof}
In this case $E$ contains no cycles, and since $L_K(E)$ is also simple and unital, $E$ has a finite number of vertices and $E$ is a simple graph.  However, since $E$ has no cycles and a finite number of vertices, $E$ has a sink.  The property that all singular points are reachable by every other vertex in a simple graph then implies that $E$ has exactly one sink and no infinite emitters.  Hence $E$ is a finite graph with no cycles and exactly one sink, from which we may conclude that $L_K(E) \cong M_n(K)$ for some $n \in \N$, and $L_K(E)$ is Morita equivalent to $K$. 
\end{proof}

If $L_K(E)$ is a unital simple Leavitt path algebra that is purely infinite, we may do some additional simplifications as we consider the classification problem.  Since Move~(S) preserves Morita equivalence of the Leavitt path algebra, and since $E$ contains a finite number of vertices and a cycle, we may repeatedly apply Move~(S) to $E$ without changing the Morita equivalence class of the associated Leavitt path algebra, and after a finite number of applications we obtain a graph with no sources.  Hence there is no loss in generality if we reduce the problem to considering purely infinite unital simple Leavitt path algebras of graphs with no sources.

\begin{theorem}[cf.~Theorem~1.25 of \cite{ALPS}] \label{Abrams-et-al-thm}
Suppose $E$ and $F$ are finite graphs, $K$ is any field, and that the Leavitt path algebras  $L_K(E)$ and $L_K(F)$ are purely infinite and simple.  If the following two conditions hold:
\begin{itemize}
\item[(1)] $K_0^\textnormal{alg} (L_K(E)) \cong K_0^\textnormal{alg} (L_K(F))$, and
\item[(2)] $\sgn (\det (I-A_E^t)) = \sgn (\det (I-A_F^t))$,
\end{itemize}
then $E \sim_M F$, and $L_K(E)$ is Morita equivalent to $L_K(F)$.
\end{theorem}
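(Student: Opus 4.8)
The plan is to reduce the hypotheses to the exact setting of \cite[Theorem~1.25]{ALPS} and then translate its conclusion into the language of our moves. First I would observe that because $L_K(E)$ and $L_K(F)$ are purely infinite and simple, the graphs $E$ and $F$ are simple graphs that each contain a cycle. Since $E$ and $F$ are finite, a cofinality argument applied to the infinite path winding around a cycle shows that neither graph can contain a sink: a sink emits no edges and hence cannot reach the cycle, contradicting cofinality. Thus $E$ and $F$ are finite graphs with no sinks, which is precisely the class treated in \cite{ALPS}. (If convenient, one may further apply Move~(S) repeatedly to delete any regular sources, staying within the move-equivalence class by Proposition~\ref{Move-S-ME-prop}, so that we may also assume $E$ and $F$ have no sources.)

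Next I would identify the invariant with the Bowen--Franks data of the underlying shifts of finite type. Since $E$ and $F$ are finite with no sinks, every vertex is regular, so in the notation of Proposition~\ref{K-theory-comp-prop} we have $B_E = A_E$ with the block $C_E$ empty; hence $K_0^\textnormal{alg}(L_K(E)) \cong \coker(A_E^t - I : \Z^{E^0} \to \Z^{E^0})$, which is the Bowen--Franks group of $A_E$, and similarly for $F$. Hypothesis~(1) therefore asserts that the Bowen--Franks groups of $A_E$ and $A_F$ agree, while hypothesis~(2) asserts that $\sgn\det(I - A_E^t) = \sgn\det(I - A_F^t)$. These are exactly the two invariants in Franks' classification of flow equivalence for irreducible shifts of finite type, which is the engine behind \cite[Theorem~1.25]{ALPS}.

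With the hypotheses matched, I would invoke \cite[Theorem~1.25]{ALPS} to conclude that $E$ and $F$ are connected by a finite sequence of the elementary flow-equivalence moves---out-splitting, in-splitting, and symbol expansion/contraction, together with source removal. The key translation step is to recognize each of these as one of our moves: out-splitting is Move~(O), in-splitting is Move~(I), symbol expansion and contraction are Move~(R) and its inverse, and source deletion is Move~(S). Hence the sequence realizing the flow equivalence is a sequence of Moves (S), (O), (I), (R) and their inverses, giving $E \sim_M F$ in the sense of Definition~\ref{move-equivalent-def}. Finally, Theorem~\ref{moves-imply-concrete-ME-thm} turns $E \sim_M F$ into the desired conclusion that $L_K(E)$ is Morita equivalent to $L_K(F)$.

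The main obstacle is the third step: verifying that the moves appearing in \cite{ALPS} coincide with, or are realized by finite sequences of, our Moves (S), (O), (I), (R) and their inverses. This is the substantive content of the ALPS classification---in particular the fact that flow equivalence of the associated shifts, which rests on Franks' theorem and the description of flow equivalence via strong shift equivalence and symbol expansion, can be implemented by these graph moves while preserving Morita equivalence of the Leavitt path algebra. Since this is precisely what \cite[Theorem~1.25]{ALPS} establishes, the honest route is to cite it for that step, our own contribution being the reduction of the hypotheses to a finite graph with no sinks and the passage from move equivalence to Morita equivalence via Theorem~\ref{moves-imply-concrete-ME-thm}.
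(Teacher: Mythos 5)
Your proposal is correct and matches the paper's treatment: the paper gives no independent proof of this theorem but simply imports \cite[Theorem~1.25]{ALPS}, noting in the remark that follows that the ALPS moves (S), (O), (I), and ``contraction''/``expansion'' generate the same equivalence relation as (S), (O), (I), (R). Your additional observations --- that cofinality of a finite simple graph containing a cycle rules out sinks, so the ALPS hypotheses are met, and that $K_0^\textnormal{alg}$ is the Bowen--Franks group --- are correct details that the paper leaves implicit.
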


\begin{remark}
In \cite{ALPS} the authors describe the moves generating the equivalence relation $\sim_M$ slightly differently.  They use the moves (S), (O), and (I) as we do here, but instead of (R) they use a move called ``contraction" (with inverse move called ``expansion"), which they describe in \cite[Definition~1.6]{ALPS}.  One can show, with just a little bit of work, that the equivalence relation generated by (S), (O), (I), and ``contraction" is the same as the  equivalence relation $\sim_M$ generated by (S), (O), (I), and (R).
\end{remark}

\begin{remark}
Observe that when $E$ and $F$ have no sources, $E \sim_M F$ implies $E$ may be transformed in $F$ using the moves (O), (I), (R), and their inverses.
\end{remark}

\begin{remark}
It is unknown whether Condition~(2) in Theorem~\ref{Abrams-et-al-thm} is necessary, and it is currently the goal of many researchers to determine whether this ``sign of the determinant condition" can be removed from the theorem.
\end{remark}

We can combine Theorem~\ref{Abrams-et-al-thm} with the results of this paper to obtain the following result summarizing the current status of the classification of unital simple Leavitt path algebras.

\begin{theorem} \label{simple-unital-ME-class-thm}
Let $K$ be any field, and let $L_K(E)$ and $L_K(F)$ be unital simple Leavitt path algebras over $K$ that are purely infinite.
\begin{itemize}
\item[(1)] If $E$ and $F$ both have a finite number of edges, and if 
$$K_0^\textnormal{alg} (L_K(E)) \cong K_0^\textnormal{alg} (L_K(F))$$ and $$\sgn (\det (I-A_E^t)) = \sgn (\det (I-A_F^t)),$$ then $L_K(E)$ is Morita equivalent to $L_K(F)$.
\item[(2)] If one of $E$ and $F$ has a finite number of edges, and the other has an infinite number of edges, then $L_K(E)$ and $L_K(F)$ are not Morita equivalent.
\item[(3)] If $E$ and $F$ both have an infinite number of edges, then $L_K(E)$ is Morita equivalent to $L_K(F)$ if and only if $$K_0^\textnormal{alg} (L_K(E)) \cong K_0^\textnormal{alg} (L_K(F)) \quad \text{ and } \quad |E^0_\textnormal{sing}| = |F^0_\textnormal{sing}|.$$  In addition, if $E$ and $F$ both have an infinite number of edges, and if $K$ is a field with no free quotients, then $L_K(E)$ is Morita equivalent to $L_K(F)$ if and only if $$ \qquad \qquad K_0^\textnormal{alg} (L_K(E)) \cong K_0^\textnormal{alg} (L_K(F)) \text{ and } K_1^\textnormal{alg} (L_K(E)) \cong K_1^\textnormal{alg} (L_K(F)).$$
\end{itemize}
\end{theorem}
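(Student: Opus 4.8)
The plan is to treat Theorem~\ref{simple-unital-ME-class-thm} as a bookkeeping synthesis: each of the three cases should reduce to a result already in hand, so the proof will mostly consist of checking that the hypotheses line up, with the one genuinely new ingredient being the ``mixed'' case~(2). Throughout I would first record the standing observation that, since $L_K(E)$ and $L_K(F)$ are unital, the vertex sets $E^0$ and $F^0$ are finite; this is exactly what lets the earlier finite-vertex results apply.

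For Part~(1), once $E^0$ and $F^0$ are known to be finite and $E$, $F$ are assumed to have finitely many edges, both $E$ and $F$ are finite graphs, and they are simple and purely infinite by hypothesis. I would then simply invoke Theorem~\ref{Abrams-et-al-thm} with the two stated conditions on $K_0^\textnormal{alg}$ and $\sgn(\det(I - A^t))$ to conclude $E \sim_M F$, and hence Morita equivalence of the Leavitt path algebras. For Part~(3), the hypotheses say $E$ and $F$ are simple graphs with finitely many vertices and infinitely many edges, so the first biconditional is exactly the equivalence $(a) \iff (b)$ of Theorem~\ref{field-does-not-matter-thm}, and the second (under the no-free-quotients assumption on $K$) is exactly $(1) \iff (2)$ of Theorem~\ref{class-unital-inf-LPA-thm}. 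Nothing new is needed here.

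The heart of the argument is Part~(2). Suppose, without loss of generality, that $E$ has finitely many edges and $F$ has infinitely many; both have finite vertex sets. Since $F$ has finitely many vertices but infinitely many edges, some vertex of $F$ must emit infinitely many edges, so $|F^0_\textnormal{sing}| \geq 1$. On the other hand, $E$ is a finite graph whose Leavitt path algebra is purely infinite, so by the dichotomy for simple Leavitt path algebras $E$ contains a cycle, and I claim $E$ has no singular vertices. It has no infinite emitters because it is finite, and it has no sinks because a cofinal graph containing a cycle cannot have a sink: traversing the cycle produces an infinite path $\mu$, and cofinality would force a sink $v$ to reach some vertex $s(e_i)$ on $\mu$, but a sink emits no edges and cannot lie on a cycle, a contradiction. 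Hence $|E^0_\textnormal{sing}| = 0$. Now if $L_K(E)$ and $L_K(F)$ were Morita equivalent, Corollary~\ref{same-number-singular-cor} would give $|E^0_\textnormal{sing}| = |F^0_\textnormal{sing}|$, contradicting $0 < 1 \leq |F^0_\textnormal{sing}|$; so they are not Morita equivalent.

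The main obstacle is not any deep estimate but the one structural fact buried in Part~(2): that a finite simple purely infinite graph has no singular vertices, which rests on the cofinality argument ruling out sinks. Everything else is a matter of confirming that unitality supplies finite vertex sets and that the edge-count cases match precisely the hypotheses of Theorem~\ref{Abrams-et-al-thm}, Theorem~\ref{field-does-not-matter-thm}, and Theorem~\ref{class-unital-inf-LPA-thm}.
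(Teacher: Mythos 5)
Your proposal is correct and follows essentially the same route as the paper: Part (1) is Theorem~\ref{Abrams-et-al-thm}, Part (3) is read off from Theorem~\ref{field-does-not-matter-thm}(a),(b) and Theorem~\ref{class-unital-inf-LPA-thm}(1),(2), and Part (2) is proved exactly as in the paper by showing the finite-edge graph has no singular vertices (no sinks by cofinality plus the existence of a cycle, no infinite emitters by finiteness) while the infinite-edge graph has at least one, and then applying Corollary~\ref{same-number-singular-cor}. Your extra elaboration of why cofinality rules out sinks is a correct filling-in of a step the paper states without detail.
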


\begin{proof}
Since $L_K(E)$ and $L_K(F)$ are unital simple Leavitt path algebras, $E$ and $F$ are simple graphs with a finite number of vertices.  Statement (1) is Theorem~\ref{Abrams-et-al-thm}.  To deduce statement (2), suppose, without loss of generality, hat $E$ has an infinite number of edges and $F$ has a finite number of edges.  Because $L_K(F)$ is purely infinite and simple, $F$ is simple and contains a cycle, and the cofinality of $F$ implies that $F$ has no sinks.  Since $F$ has no sinks and $F$ has a finite number of edges, $F$ has no singular vertices. However, $E$ is a graph with a finite number of vertices and an infinite number of edges, so $E$ contains at least one singular vertex.  It follows from Corollary~\ref{same-number-singular-cor} that $L_K(E)$ and $L_K(F)$ are not Morita equivalent.  Statement (3) follows from Theorem~\ref{field-does-not-matter-thm}(a),(b) and Theorem~\ref{class-unital-inf-LPA-thm}(1),(2).
\end{proof}

\begin{remark}
As described in the introduction to this section, in the classification of unital simple Leavitt path algebras, the ultramatricial case is completely understood (and fairly trivial), and it is only the purely infinite case that remains to be solved.  Theorem~\ref{simple-unital-ME-class-thm} is almost a complete classification up to Morita equivalence for unital simple Leavitt path algebras.  The only missing piece is determining whether the ``sign of the determinant condition" in (1) is necessary.  Once this question is settled, and once the proper biconditional statement is determined for (1), we will have a complete classification.
\end{remark}

\begin{remark}
If it is ultimately proven the ``sign of the determinant condition" is unnecessary in the finite graph case, then Theorem~\ref{simple-unital-ME-class-thm} shows that 
\begin{equation} \label{invariant-1-eq}
(K_0^\textnormal{alg}(L_K(E)), |E^0_\textnormal{sing}| )
\end{equation}
will be a complete Morita equivalence invariant for a simple unital Leavitt path algebra $L_K(E)$.

If instead it is discovered that the ``sign of the determinant condition" is necessary in the finite graph case (i.e., when $E$ and $F$ are finite simple graphs, $L_K(E)$ Morita equivalent to $L_K(F)$ implies that $\sgn (\det (I-A_E^t)) = \sgn (\det (I-A_E^t))$), then we will need to include the sign of the determinant as part of the invariant.  If this turns out to be the case, we could define the sign of any matrix with an entry of $\pm \infty$ to be some formal symbol $\circledast$, so that if $E$ is a graph with an infinite emitter, then $\sgn (\det (I-A_E^t)) = \circledast$.  In this case, Theorem~\ref{simple-unital-ME-class-thm} shows that 
\begin{equation} \label{invariant-2-eq}
(K_0^\textnormal{alg}(L_K(E)), |E^0_\textnormal{sing}|, \sgn (\det (I-A_E^t) ) )
\end{equation}
will be a complete Morita equivalence invariant for a simple unital Leavitt path algebra $L_K(E)$.  One should, in particular, observe that the information provided by $\sgn (\det (I-A_E^t)$ is superfluous if $E$ has an infinite emitter.  This is admittedly some notational trickery to produce a single expression for the invariant, but it does avoid having to write separate invariants for graphs with an infinite number of edges and graphs with a finite number of edges.  (We also mention that in this case it may be convenient to take $\circledast$ to be $\infty$ or $-\infty$ and think of these determinants as being infinite, or it may be convenient to take $\circledast$ to be $0$, so that the sign of the determinant is always in the set $\{-1, 0, 1\}$ and matrices with infinite entries are analogous to noninvertible finite matrices, which also have determinant zero.)

It follows from Corollary~\ref{K1-tell-number-singular-cor} that when $K$ is a field with no free quotients, one can replace $|E^0_\textnormal{sing}|$ by $K_1^\textnormal{alg}(L_K(E))$ in each of the invariants displayed in \eqref{invariant-1-eq} and \eqref{invariant-2-eq}.  However, this substitution cannot be made, and still produce a complete invariant, when the field $K$ is allowed to have free quotients.

It is also worth pointing out that the two invariants $(K_0^\textnormal{alg}(L_K(E)), |E^0_\textnormal{sing}| )$ and $(K_0^\textnormal{alg}(L_K(E)), |E^0_\textnormal{sing}|, \sgn (\det (I-A_E^t) ) )$ depend only on the graph ---indeed, Proposition~\ref{K-theory-comp-prop}(b) shows that the group $K_0^\textnormal{alg}(L_K(E))$ is independent of the field $K$.  This is to be contrasted with the two invariants $(K_0^\textnormal{alg}(L_K(E)), K_1^\textnormal{alg}(L_K(E)))$ and $(K_0^\textnormal{alg}(L_K(E)), K_1^\textnormal{alg}(L_K(E)), \sgn (\det (I-A_E^t) ) )$, which involve both the graph $E$ and the field $K$, due to the fact that the field $K$ appears in the computation of $K_1^\textnormal{alg}(L_K(E))$ (see Proposition~\ref{K-theory-comp-prop}(b)).
\end{remark}

\section{The Cuntz splice} \label{CS-sec}

In this section we consider a popular construction, known as the ``Cuntz splice", which changes the sign of $\det (I-A_E^t)$ when $E$ is a finite graph.  The Cuntz splice allows one to successfully remove the ``sign of the determinant condition" in the classification of simple $C^*$-algebras of finite graphs.  However, the Cuntz splice has not been able to serve the same purpose in the classification of Leavitt path algebras, because it is currently not known whether the Cuntz splice preserves the Morita equivalence class of the associated Leavitt path algebra.

\begin{definition}[Move (CS): The Cuntz Splice] 
Let $E = (E^0, E^1, r_E, s_E)$ be a graph and let $v \in E^0$ be a vertex that is the base point of at least two simple cycles. (Recall that a cycle $\alpha$ is simple if $r(\alpha_i) \neq r(\alpha)$ for all $1 \leq i \leq |\alpha|-1$.) 
Define a graph $F = (F^0, F^1, r_F, s_F)$ by $F^0 = E^0 \cup \{ v_1, v_2 \}$, $F^1 = E^1 \cup \{ e_1, e_2, f_1, f_2, h_1, h_2\}$, and let $r_F$ and $s_F$ extend $r_E$ and $s_E$, respectively, and satisfy 
$$s_F(e_1) = v, s_F(e_2) = v_1, s_F(f_1) = v_1,  s_F(f_2) = v_2, s_F(h_1) = v_1, s_F(h_2) = v_2, $$
and 
$$r_F(e_1) = v_1, r_F(e_2) = v, r_F(f_1) = v_2, r_F(f_2) = v_1,  r_F(h_1) = v_1, r_F(h_2) = v_2.$$
We say that $F$ is obtained by applying Move~(CS) to $E$ at $v$.
\end{definition}  

The following is an example of the Cuntz splice performed at vertex $\star$. 

$$\xymatrix{ 
		\bullet \ar@(dl,ul) \ar@/^0.5em/[r] & \star \ar@/^0.5em/[l]
	} 
	\quad
	\rightsquigarrow
	\quad
	\quad
	\xymatrix{ 
		\bullet \ar@(dl,ul) \ar@/^0.5em/[r] & \star \ar@/^0.5em/[l] \ar@/^0.5em/[r]^{e_1} & v_1 \ar@/^0.5em/[l]^{e_2} \ar@/^0.5em/[r]^{f_1} \ar@(ul,ur)^{h_1} & v_2 \ar@/^0.5em/[l]^{f_2} \ar@(ur,dr)^{h_2}
	}
$$

\begin{remark}
If $E$ is a purely infinite simple graph with a finite number of vertices and no sources, then every vertex of $E$ is the base point of two simple cycles.  Thus in this case the Cuntz splice may be performed at any vertex of $E$.
\end{remark}

\begin{proposition} \label{effects-CS-prop}
Let $E$ be a graph, and let $F$ be a graph formed by performing the Cuntz splice to any vertex of $E$ that is the base point of at least two simple cycles.
\begin{itemize}
\item[(1)] If $E$ is a simple graph, then $F$ is a simple graph.
\item[(2)] If $K$ is any field, then $$\qquad \quad K_0^\textnormal{alg} (L_K(E)) \cong K_0^\textnormal{alg} (L_K(F)) \text{ and } K_1^\textnormal{alg} (L_K(E)) \cong K_1^\textnormal{alg} (L_K(F)).$$
\item[(3)] If $E$ is a finite graph, then $$\det (I - A_F^t) = - \det (I-A_E^t).$$
\end{itemize}
\end{proposition}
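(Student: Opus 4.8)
The plan is to dispose of part~(1) by directly checking the three conditions of Definition~\ref{simple-graphs-def}, and to derive parts~(2) and~(3) from a single matrix reduction. For simplicity, note first that $v_1$ and $v_2$ each emit finitely many (and at least one) edge, so they are regular and $F^0_\textnormal{sing} = E^0_\textnormal{sing}$. Condition~(L) holds because every new cycle (the loops $h_1,h_2$, the return cycle $e_1e_2$, and the cycle $f_1f_2$) has an exit, while the old cycles retain theirs. Reachability of singular vertices is inherited from $E$ for old vertices (their $E$-paths survive in $F$), and the new vertices reach $v$ via $e_2$ and $f_2e_2$ and hence reach every singular vertex. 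For cofinality I would use that $v$ lies on a cycle, so cofinality of $E$ forces every vertex of $E$ to reach $v$; then an infinite path in $F$ either meets $\{v_1,v_2\}$ infinitely often, and is reached by every vertex since every vertex reaches $v$ and hence $v_1,v_2$, or it has a tail inside $E$, to which cofinality of $E$ together with $v_1,v_2\to v$ applies.

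For parts~(2) and~(3) I would invoke Proposition~\ref{K-theory-comp-prop} to realize the $K$-groups from the map $\partial_E := \left(\begin{smallmatrix} B_E^t-I \\ C_E^t \end{smallmatrix}\right)\colon \Z^{E^0_\textnormal{reg}} \to \Z^{E^0}$. Reading off the edges of Move~(CS) and ordering the bases as $E^0$ followed by $v_1,v_2$ (and $E^0_\textnormal{reg}$ followed by $v_1,v_2$ on the domain), one obtains the block form
\[
\partial_F = \begin{pmatrix} \partial_E & -\delta_v & 0 \\ \epsilon_v & 0 & -1 \\ 0 & -1 & 0 \end{pmatrix},
\]
where $\epsilon_v$ records the single new edge $e_1\colon v\to v_1$ and the corner $Q=\left(\begin{smallmatrix} 0 & -1 \\ -1 & 0 \end{smallmatrix}\right)$ is invertible over $\Z$. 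The crucial point is that the Schur-complement correction vanishes: with coupling column $u=\left(\begin{smallmatrix} -\delta_v & 0 \end{smallmatrix}\right)$ and coupling row $w=\left(\begin{smallmatrix} \epsilon_v \\ 0 \end{smallmatrix}\right)$ one finds $uQ^{-1}w=0$, since $u$ is supported in the $v_1$-coordinate while $Q^{-1}w$ is supported in the $v_2$-coordinate. Consequently left and right multiplication by the unipotent integer matrices $\left(\begin{smallmatrix} I & -uQ^{-1} \\ 0 & I \end{smallmatrix}\right)$ and $\left(\begin{smallmatrix} I & 0 \\ -Q^{-1}w & I \end{smallmatrix}\right)$ carries $\partial_F$ to the block-diagonal map $\partial_E \oplus Q$.

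Since these transforming matrices lie in $GL(\Z^{F^0})$ and $GL(\Z^{F^0_\textnormal{reg}})$, they induce isomorphisms on $\coker$ and $\ker$ over $\Z$, and—because an integer matrix in $GL_n(\Z)$ acts as an automorphism of $(K^{\times})^n$—also on the $K^{\times}$-cokernel appearing in the $K_1$ formula of Proposition~\ref{K-theory-comp-prop}. As $Q$ is invertible over both $\Z$ and $K^{\times}$, it has trivial kernel and cokernel, so $\partial_E\oplus Q$ has the same kernel, cokernel, and $K^{\times}$-cokernel as $\partial_E$; this gives $K_0^\textnormal{alg}(L_K(F))\cong K_0^\textnormal{alg}(L_K(E))$ and $K_1^\textnormal{alg}(L_K(F))\cong K_1^\textnormal{alg}(L_K(E))$, proving part~(2). (If $v$ is an infinite emitter the row $\epsilon_v$ is simply absent and the same reduction applies.) For part~(3), when $E$ is finite the analogous square matrix $I-A_F^t$ has the same block shape and reduces, by the same determinant-$1$ unipotent operations, to $(I-A_E^t)\oplus Q$; hence $\det(I-A_F^t)=\det(I-A_E^t)\cdot\det Q = -\det(I-A_E^t)$.

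I expect the main obstacle to be part~(2): writing down the correct block form of $\partial_F$ and, above all, recognizing the vanishing $uQ^{-1}w=0$ that makes the Schur complement exactly $\partial_E$ rather than a perturbation of it. The remaining care is bookkeeping—keeping the domain $\Z^{F^0_\textnormal{reg}}$ and codomain $\Z^{F^0}$ distinct so that the row and column operations act on the right free modules, and verifying that these operations remain valid for the multiplicative $K^{\times}$-cokernel—whereas parts~(1) and~(3) are comparatively routine once this reduction is in hand.
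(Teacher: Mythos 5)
Your proof is correct and follows essentially the same route as the paper: both write the relevant matrix in block form and reduce it by integral row and column operations to $\partial_E \oplus Q$ with $Q$ a $2\times 2$ block invertible over $\Z$ of determinant $-1$ (the paper treats the cases of $v$ regular and $v$ singular separately and simply asserts the "equivalence" that your Schur-complement computation makes explicit, and it likewise dismisses part~(1) as the same straightforward checks you carry out). The only quibble is a harmless sign inconsistency in your displayed block form --- with the $B^t-I$ convention used for $\partial_E$ the coupling entries and the off-diagonal entries of $Q$ should be $+1$ rather than $-1$ --- which affects none of the kernel, cokernel, or determinant conclusions.
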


\begin{proof}
For (1) it is straightforward to see that that the Cuntz splice preserves cofinality, Condition~(L), and the existence of paths from vertices to singular vertices.  Thus if $E$ is simple, so is $F$.

For (2), let us begin by decomposing $E^0 = E^0_\textnormal{reg} \cup E^0_\text{sing}$ and writing the vertex matrix of $E$ in block form
$$A_E = \begin{pmatrix} B_E & C_E \\ * & * \end{pmatrix}.$$
If $F$ is formed by performing a Cuntz splice to $E$ at a regular vertex, then (noting that the two added vertices are regular vertices in $F$), we see that the vertex matrix of $F$ has the form
$$A_F =
\left( \begin{array}{cc|ccc|ccc} 
 								1 & 1 & 0 & 0 & \cdots & 0 & 0 & \cdots \\
 								1 & 1 & 1 & 0 & \cdots & 0 & 0 & \cdots \\
 								\hline 
 								0 & 1 & & & & & & \\
 								0 & 0 & & B_E & & & C_E &  \\ 
								\vdots & \vdots & & & & & &  \\ 
								\hline
								0 & 0 & & & & & &  \\ 
								0 & 0 & & * & & & * &  \\ 
								\vdots & \vdots & & & & & &  \\ 
 						  \end{array} \right).
						  $$
Thus the algebraic $K$-theory of $L_K(F)$ is obtained by considering the kernel and cokernel of the matrix
$$ \left( \begin{array}{cc|ccc} 
 								0 & 1 & 0 & 0 & \cdots  \\
 								1 & 0 & 1 & 0 & \cdots   \\
 								\hline 
 								0 & 1 & & &  \\
 								0 & 0 & & B_E^t-I &  \\ 
								\vdots & \vdots & & &  \\ 
								\hline
								0 & 0 & & &   \\ 
								0 & 0 & & C_E^t &   \\ 
								\vdots & \vdots & & &   \\ 
 						  \end{array} \right) { \text{equivalent} \atop \longleftrightarrow}
						  \left( \begin{array}{cc|ccc} 
 								1 & 0 & 0 & 0 & \cdots  \\
 								0 & 1 & 0 & 0 & \cdots   \\
 								\hline 
 								0 & 0 & & &  \\
 								0 & 0 & & B_E^t-I &  \\ 
								\vdots & \vdots & & &  \\ 
								\hline
								0 & 0 & & &   \\ 
								0 & 0 & & C_E^t &   \\ 
								\vdots & \vdots & & &   \\ 
 						  \end{array} \right).
						  $$
However, the $2 \times 2$ identity in the upper-left-hand corner has no effect on the kernel and cokernel, so that  
$$K_0^\textnormal{alg} (L_K(F)) \cong \coker \left( \left( \begin{smallmatrix} B_E^t-I \\ C_E^t \end{smallmatrix} \right) : \Z^{E^0_\textnormal{reg}} \to \Z^{E^0} \right) \cong K_0^\textnormal{alg} (L_K(E)),$$ and 
\begin{align*}
K_1^\textnormal{alg} (L_K(F)) &\cong \ker \left( \left( \begin{smallmatrix} B_E^t-I \\ C_E^t \end{smallmatrix} \right) : \Z^{E^0_\textnormal{reg}} \to \Z^{E^0} \right) \\
&\qquad \qquad  \oplus \coker \left( \left( \begin{smallmatrix} B_E^t-I \\ C_E^t \end{smallmatrix} \right) : (K_{1}^{\textnormal{alg} } (K) )^{E^0_\textnormal{reg}} \to ( K_{1}^{\textnormal{alg} } (K) )^{E^0} \right) \\
&\cong K_1^\textnormal{alg} (L_K(E)).
\end{align*}

If $F$ is formed by performing a Cuntz splice to $E$ at a singular vertex, then (noting that the two added vertices are regular vertices in $F$), we see that the vertex matrix of $F$ has the form
$$A_F =
\left( \begin{array}{cc|ccc|ccc} 
 								1 & 1 & 0 & 0 & \cdots & 0 & 0 & \cdots \\
 								1 & 1 & 0 & 0 & \cdots & 1 & 0 & \cdots \\
 								\hline 
 								0 & 0 & & & & & & \\
 								0 & 0 & & B_E & & & C_E &  \\ 
								\vdots & \vdots & & & & & &  \\ 
								\hline
								0 & 1 & & & & & &  \\ 
								0 & 0 & & * & & & * &  \\ 
								\vdots & \vdots & & & & & &  \\ 
 						  \end{array} \right).
						  $$
Thus the algebraic $K$-theory of $L_K(F)$ is obtained by considering the kernel and cokernel of the matrix
$$ \left( \begin{array}{cc|ccc} 
 								0 & 1 & 0 & 0 & \cdots  \\
 								1 & 0 & 0 & 0 & \cdots   \\
 								\hline 
 								0 & 0 & & &  \\
 								0 & 0 & & B_E^t-I &  \\ 
								\vdots & \vdots & & &  \\ 
								\hline
								0 & 1 & & &   \\ 
								0 & 0 & & C_E^t &   \\ 
								\vdots & \vdots & & &   \\ 
 						  \end{array} \right) { \text{equivalent} \atop \longleftrightarrow}
						  \left( \begin{array}{cc|ccc} 
 								1 & 0 & 0 & 0 & \cdots  \\
 								0 & 1 & 0 & 0 & \cdots   \\
 								\hline 
 								0 & 0 & & &  \\
 								0 & 0 & & B_E^t-I &  \\ 
								\vdots & \vdots & & &  \\ 
								\hline
								0 & 0 & & &   \\ 
								0 & 0 & & C_E^t &   \\ 
								\vdots & \vdots & & &   \\ 
 						  \end{array} \right).
						  $$
However, the $2 \times 2$ identity in the upper-left-hand corner has no effect on the kernel and cokernel, and as above we see that $K_0^\textnormal{alg} (L_K(F)) \cong K_0^\textnormal{alg} (L_K(E))$ and $K_1^\textnormal{alg} (L_K(F)) \cong K_1^\textnormal{alg} (L_K(E))$.

For (3), we see that if $E$ is a finite graph, then 
$$A_F =
\left( \begin{array}{cc|ccc} 
 								1 & 1 & 0 & 0 \ \cdots \\
 								1 & 1 & 1 & 0 \ \cdots  \\
 								\hline 
 								0 & 1 & &   \\
 								0 & 0 & & A_E \quad   \\ 
								\vdots & \vdots & &    \\ 
 						  \end{array} \right) \ \text{ and } \ \ I-A_F^t =
\left( \begin{array}{cc|ccc} 
 								0 & -1 & 0 & 0 \ \cdots \\
 								-1 & 0 & -1 & 0 \ \cdots  \\
 								\hline 
 								0 & -1 & &   \\
 								0 & 0 & & I-A_E^t   \\ 
								\vdots & \vdots & &    \\ 
 						  \end{array} \right).
						  $$
Since adding multiples of one row (respectively, column) to another row (respectively, column) does not change the determinant of a matrix, we see that $I-A_F^t$ has the same determinant as the matrix 
$$
\left( \begin{array}{cc|ccc} 
 								0 & -1 & 0 & 0 \ \cdots \\
 								-1 & 0 & 0 & 0 \ \cdots  \\
 								\hline 
 								0 & 0 & &   \\
 								0 & 0 & & I-A_E^t   \\ 
								\vdots & \vdots & &    \\ 
 						  \end{array} \right)
$$
and hence $\det (I-A_F^t) = \det \left( \begin{smallmatrix} 0 & -1 \\ -1 & 0 \end{smallmatrix} \right) \cdot \det (I-A_E^t) = - \det (I-A_E^t)$.

\end{proof}

\begin{remark}
It is known that if $E$ is a graph with a finite number of vertices, and $F$ is the graph formed by performing the Cuntz splice at a vertex of $E$ that supports two simple cycles, then $C^*(E)$ is strongly Morita equivalent to $C^*(F)$.  It is currently the effort of many researchers to determine whether $L_K(E)$ is Morita equivalent to $L_K(F)$; that is, to determine whether the Cuntz splice preserves the Morita equivalence class of the associated Leavitt path algebra.  Using our classification result, the following corollary shows that this is true if $E$ is a simple graph with a finite number of vertices and an infinite number of edges.  However, the case when $E$ is a finite simple graph is still open.
\end{remark}

\begin{proposition} \label{Cuntz-splice-preserves-Morita-eq-inf-emit-prop}
Let $E$ be a simple graph with a finite number of vertices and an infinite number of edges.  If $F$ is the graph formed by performing a Cuntz splice at any vertex of $E$, then $E \sim_M F$.  In addition, if $K$ is any field, then $L_K(E)$ is Morita equivalent to $L_K(F)$.
\end{proposition}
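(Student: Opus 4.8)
The plan is to obtain both conclusions simultaneously by invoking the master equivalence Theorem~\ref{field-does-not-matter-thm} for the pair $E$ and $F$. Concretely, I would verify that $E$ and $F$ satisfy condition~(b) of that theorem --- namely that $K_0^\textnormal{alg}(L_K(E)) \cong K_0^\textnormal{alg}(L_K(F))$ and $|E^0_\textnormal{sing}| = |F^0_\textnormal{sing}|$ --- and then simply read off the desired statements $E \sim_M F$ (condition~(g)) and Morita equivalence of $L_K(E)$ and $L_K(F)$ (condition~(a)) from the equivalence of all the listed conditions. Since condition~(b) is independent of the field, this establishes the Morita equivalence for every field $K$ at once.

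First I would check that Theorem~\ref{field-does-not-matter-thm} is applicable, i.e.\ that $F$ is again a simple graph with finitely many vertices and infinitely many edges. Simplicity of $F$ is precisely Proposition~\ref{effects-CS-prop}(1). Since $F^0 = E^0 \cup \{v_1, v_2\}$ and $F^1 = E^1 \cup \{e_1,e_2,f_1,f_2,h_1,h_2\}$, the graph $F$ inherits a finite vertex set and an infinite edge set from $E$. The isomorphism $K_0^\textnormal{alg}(L_K(E)) \cong K_0^\textnormal{alg}(L_K(F))$ is supplied directly by Proposition~\ref{effects-CS-prop}(2), so the only remaining ingredient of condition~(b) is the equality of singular-vertex counts.

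The one genuine verification, and hence the main step requiring care, is the equality $|E^0_\textnormal{sing}| = |F^0_\textnormal{sing}|$. Here I would inspect the source map of the Cuntz splice directly. The two new vertices are regular: from the definition, $v_1$ emits exactly the three edges $e_2, f_1, h_1$ and $v_2$ emits exactly the two edges $f_2, h_2$, so each has finite nonzero out-degree. The only original vertex whose source fibre changes is $v$, which acquires the single new edge $e_1$; since $v$ is the base point of two simple cycles it is not a sink, and appending one edge alters neither finiteness nor infiniteness of the out-degree, so $v$ retains its regular/singular status. Every other original vertex has an unchanged source fibre. Consequently the singular vertices of $F$ are exactly those of $E$, giving $|E^0_\textnormal{sing}| = |F^0_\textnormal{sing}|$.

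With condition~(b) of Theorem~\ref{field-does-not-matter-thm} in hand, the equivalences $(b)\iff(g)$ and $(b)\iff(a)$ yield $E \sim_M F$ and the Morita equivalence of $L_K(E)$ and $L_K(F)$ for an arbitrary field $K$. I do not anticipate a serious obstacle: the substantive work has already been done in Theorem~\ref{field-does-not-matter-thm} and Proposition~\ref{effects-CS-prop}, and the argument reduces to the bookkeeping of the singular-vertex count described above.
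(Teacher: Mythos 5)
Your proposal is correct and follows essentially the same route as the paper: the paper's proof likewise cites Proposition~\ref{effects-CS-prop} for simplicity and preservation of $K_0^\textnormal{alg}$, observes that the Cuntz splice does not change the number of singular vertices, and concludes via Theorem~\ref{field-does-not-matter-thm}. Your explicit check of the source fibres of $v$, $v_1$, and $v_2$ merely fills in the detail the paper dismisses as ``clear.''
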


\begin{proof}
It follows from Proposition~\ref{effects-CS-prop} that performing the Cuntz splice preserves the simplicity of the graph and does not change the $K_0^\textnormal{alg}$-group.  It is also clear that the Cuntz splice does not change the number of singular vertices in a graph.  Therefore, if $K$ is any field, Theorem~\ref{field-does-not-matter-thm} implies that $L_K(E)$ is Morita equivalent to $L_K(F)$.
\end{proof}

\begin{remark}
If $E$ is a finite simple graph with at least one cycle and with $\det (I-A_E^t) \neq 0$, and if $F$ is the graph obtained by performing a Cuntz splice at a vertex of $E$ that supports at least two simple cycles, then it is always the case that $E$ is not move equivalent to $F$.  This is because when the moves (O), (I), and (R) are performed on a finite graph with no sources, the sign of $\det (I-A_E^t)$ is preserved, but when the Cuntz splice is performed on $E$ it changes the sign of $\det (I-A_E^t)$.  It is rather remarkable that when $E$ contains an infinite emitter (and there is no determinant to consider), the Cuntz splice can then be obtained by the moves (S), (O), (I), and (R).
\end{remark}

\begin{corollary}
Let $E$ be a simple graph with a finite number of vertices and an infinite number of edges, and let $K$ be any field.   If $F$ is the graph formed by performing a Cuntz splice at any vertex of $E$, then $K_n^\textnormal{alg}(L_K(E)) \cong K_n^\textnormal{alg}(L_K(F))$ for all $n \in \N \cup \{ 0 \}$.
\end{corollary}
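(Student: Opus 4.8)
The plan is to reduce the statement immediately to the Morita equivalence already established in Proposition~\ref{Cuntz-splice-preserves-Morita-eq-inf-emit-prop}. That proposition shows that when $E$ is a simple graph with a finite number of vertices and an infinite number of edges, and $F$ is obtained from $E$ by a Cuntz splice, then $L_K(E)$ is Morita equivalent to $L_K(F)$ for any field $K$. Since the algebraic $K$-groups $K_n^\textnormal{alg}$ are Morita equivalence invariants for every $n$, the desired isomorphisms $K_n^\textnormal{alg}(L_K(E)) \cong K_n^\textnormal{alg}(L_K(F))$ for all $n \in \N \cup \{0\}$ follow at once. In other words, the single step of the proof is to invoke Morita invariance of algebraic $K$-theory, all of the substantive work having already been carried out in Proposition~\ref{Cuntz-splice-preserves-Morita-eq-inf-emit-prop} (which itself rests on the move equivalence $E \sim_M F$ together with the classification result Theorem~\ref{field-does-not-matter-thm}).

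The point worth emphasizing is what this corollary adds beyond Proposition~\ref{effects-CS-prop}. For an \emph{arbitrary} graph $E$, part~(2) of Proposition~\ref{effects-CS-prop} already records the isomorphisms in the cases $n = 0$ and $n = 1$, and those cases were obtained by directly comparing the explicit formulae of Proposition~\ref{K-theory-comp-prop} after the Cuntz splice is performed. The new content here is the extension to all higher indices $n \geq 2$. Since there are no comparable explicit formulae available for $K_n^\textnormal{alg}$ when $n \geq 2$, one cannot proceed by the matrix computation used in Proposition~\ref{effects-CS-prop}; instead one genuinely needs an honest Morita equivalence between $L_K(E)$ and $L_K(F)$, and this is precisely what is available in the infinite-edge simple case via Proposition~\ref{Cuntz-splice-preserves-Morita-eq-inf-emit-prop}.

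Consequently there is no real obstacle to overcome once Proposition~\ref{Cuntz-splice-preserves-Morita-eq-inf-emit-prop} is in hand: the whole difficulty is concentrated in knowing that the Cuntz splice preserves the Morita equivalence class in this setting, which in turn is the reason the classification of Section~\ref{class-thm-sec} (and the absence of any ``sign of the determinant'' obstruction when an infinite emitter is present) is doing the heavy lifting. I would therefore write the proof as a one-line appeal to Proposition~\ref{Cuntz-splice-preserves-Morita-eq-inf-emit-prop} and the Morita invariance of $K_n^\textnormal{alg}$, perhaps with a parenthetical remark contrasting it against the direct $n=0,1$ computation of Proposition~\ref{effects-CS-prop}(2).
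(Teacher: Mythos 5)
Your proof is correct and is exactly the argument the paper intends: the corollary is stated immediately after Proposition~\ref{Cuntz-splice-preserves-Morita-eq-inf-emit-prop} precisely so that it follows from that Morita equivalence together with the Morita invariance of algebraic $K$-theory. Your parenthetical contrast with the direct $n=0,1$ computation in Proposition~\ref{effects-CS-prop}(2) is an accurate reading of why the higher $K$-groups require the Morita equivalence rather than an explicit formula.
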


\begin{remark}
Although Proposition~\ref{effects-CS-prop} shows that the Cuntz splice always preserves the $K_0^\textnormal{alg}$-group and the $K_1^\textnormal{alg}$-group, it is unknown if it preserves the higher algebraic $K$-groups.  Thus a weaker question than asking whether the Cuntz splice preserves Morita equivalence of the associated Leavitt path algebra would be to ask the following:

\smallskip

\noindent \textbf{Question:} If $E$ is a graph, $K$ is a field, and $F$ is the graph formed by performing the Cuntz splice to any vertex of $E$ that is the base point of at least two simple cycles, then is it true that $K_n^\textnormal{alg}(L_K(E)) \cong K_n^\textnormal{alg} (L_K(F))$ for all $n \in \N \cup \{ 0 \}$?

\smallskip

It would be interesting to answer this question even in the case that $E$ is a finite simple graph with no sinks or sources.

\end{remark}

\begin{remark}
If one could show that the Cuntz splice preserves Morita equivalence of Leavitt path algebras of finite simple graphs with at least one cycle, then the ``sign of the determinant condition" could be removed from Theorem~\ref{Abrams-et-al-thm} and Theorem~\ref{simple-unital-ME-class-thm} by the following argument: If $E$ and $F$ are finite simple graphs with $L_K(E)$ and $L_K(F)$ purely infinite, then $K_0^\textnormal{alg} (L_K(E) ) \cong K_0^\textnormal{alg} (L_K(F))$ implies that the $\det (I-A_E^t)$ and $\det (I-A_F^t)$ are either both zero (in the case that the $K_0^\textnormal{alg}$-groups are torsion groups) or $\det (I-A_E^t)$ and $\det (I-A_F^t)$ are both nonzero (in the case that the $K_0^\textnormal{alg}$-groups have a nonzero free part).    Thus $\det (I-A_E^t)$ and $\det (I-A_F^t)$ either have the same sign or opposite signs.  If $\sgn (\det (I-A_E^t)) = \sgn (\det (I-A_F^t))$, then Theorem~\ref{Abrams-et-al-thm} implies that $E \sim_M F$ and $L_K(E)$ is Morita equivalent to $L_K(F)$.  If $\sgn (\det (I-A_E^t)) = -\sgn (\det (I-A_F^t))$, then we may perform the Cuntz splice to any vertex of $E$ to obtain $\widetilde{E}$.  It then follows from Proposition~\ref{effects-CS-prop} that $\sgn (\det (I-A_{\widetilde{E}}^t)) = -\sgn (\det (I-A_E^t)) = \sgn (\det (I-A_F^t))$, and Theorem~\ref{Abrams-et-al-thm} implies that $\widetilde{E} \sim_M F$ and $L_K(\widetilde{E})$ is Morita equivalent to $L_K(F)$.  Hence if the Cuntz splice preserves Morita equivalence, we would also have $L_K(E)$ is Morita equivalent to $L_K(F)$.
\end{remark}

\begin{remark} \label{CS-on-L-infty-rem}
An important test case for the Cuntz splice question on finite graphs is the graph $E_2$ and the graph $E_2^-$ obtained by performing a Cuntz splice to $E_2$.

$$E_2 \qquad \xymatrix{ 
		\bullet \ar@(dl,ul) \ar@(dr,ur) 
	} 
	\qquad
	\qquad
	\quad
	\qquad E_2^- \qquad
	\xymatrix{ 
		\bullet \ar@(l,u) \ar@(l,d) \ar@/^0.5em/[r] & \bullet \ar@/^0.5em/[l] \ar@/^0.5em/[r] \ar@(ul,ur) & \bullet \ar@/^0.5em/[l] \ar@(ur,dr)
	}
$$

$ $

\noindent If $K$ is a field, the Leavitt path algebra of $E_2$ is the Leavitt algebra $L_K(2)$, and the Leavitt path algebra of $E_2^-$ is often denoted $L_K(2)^-$.  It is currently an open question as to whether $L_K(2)$ and $L_K(2)^-$ are Morita equivalent.

If we consider the analogous question for the Leavitt algebra $L_K(\infty)$, we have the graphs
$$E_\infty \qquad \xymatrix{ 
		\bullet \ar@{=>}@(ul,ur)[]^{\infty} 
	} 
	\qquad
	\qquad
	\quad
	\qquad E_\infty^- \quad
	\xymatrix{ 
		\bullet \ar@{=>}@(dl,ul)[]^{\infty} \ar@/^0.5em/[r] & \bullet \ar@/^0.5em/[l] \ar@/^0.5em/[r] \ar@(ul,ur) & \bullet \ar@/^0.5em/[l] \ar@(ur,dr)
	}
\smallskip
\smallskip
$$

\noindent and for any field $K$ the Leavitt path algebra of $E_\infty$ is the Leavitt algebra $L_K(\infty)$, and the Leavitt path algebra of $E_\infty^-$ is denoted $L_K(\infty)^-$.  It follows from Proposition~\ref{Cuntz-splice-preserves-Morita-eq-inf-emit-prop} that $L_K(\infty)$ is Morita equivalent to $L_K(\infty)^-$, and that $E_\infty \sim_M E_\infty^-$.  Thus it is possible to get from the graph $E_\infty$ to the graph $E_\infty^-$ using the moves (O), (I), (R), and their inverses.  (Since there are no sources, we do not need to consider Move (S).)  We will show in Example~\ref{E-infty-CS-ex} how to accomplish this.  Before we do so, however, it will be convenient for us to describe a couple of moves that are obtained by performing combinations of (O), (I), and (R).
\end{remark}

\begin{definition}[Move (C): Collapse] \label{Collapse-def}
Let $E = (E^0, E^1, r_E, s_E)$ be a graph with finitely many vertices, and let $v \in E^0$ be a regular vertex that is not the base point of a cycle of length one.  Define a graph $F = (F^0, F^1, r_F, s_F)$ by $F^0 = E^0 \setminus \{ v \}$,
$$
F^1 = \left( E^1 \setminus (r^{-1}(v) \cup s^{-1}(v)) \right) \cup  \{ [ef] : e \in r^{-1}(v) \text{ and } f \in s^{-1}(v) \},
$$
and let the range and source maps of $F$ extend those of $E$ and satisfy $r_F([ef]) = r_E(f)$ and $s_F([ef]) = s_E(e)$. 
We call $F$ the graph formed by \emph{collapsing $E$ at the vertex $v$}, and we also say $F$ is formed by performing Move~(C) to $E$ at the vertex $v$.
\end{definition}

\begin{definition}[Move (T): Transitive Closure)] \label{moveT-def}
Let $E = (E^0, E^1, r_E, s_E)$ be a graph and let $\alpha = \alpha_1 \alpha _2 \cdots \alpha_n$ be a path in $E$ and suppose that there are infinitely many edges from $s_E(\alpha_1)$ to $r_E(\alpha_1)$. 
Let $F = (F^0, F^1, r_F, s_F)$ be the graph with vertex set $F^0 := E^0$, edge set
$$
	F^1 = E^1 \cup \{ \alpha^m : m \in \N \},
$$
and range and source maps $r_F$ and $s_F$ that extend those of $E$ and have $r_{F}(\alpha^m) = r_{E}(\alpha)$ and $s_{F}(\alpha^m) = s_{E}(\alpha)$.
We call $F$ the graph formed by \emph{making $E$ transitive at the path $\alpha$}, and we also say $F$ is the graph formed by performing Move~(T) to $E$ at the path $\alpha$.
\end{definition}

\begin{remark}
It is shown in \cite[Theorem~5.2 and Theorem~5.4]{Sor} that if $F$ is obtained by performing either Move~(C) or  Move~(T) to $E$, then $E \sim_M F$.   In fact, the proofs show that each of the moves, Move~(C) and Move~(T), may be obtained by performing sequences of the three moves (O), (I), and (R), and the proofs describe exactly how this may be done. 
\end{remark}

\begin{example} \label{E-infty-CS-ex}
We will show how to transform the graph $E_\infty$ into the graph $E_\infty^-$ using the moves (O), (I), (R), and their inverses, as discussed in Remark~\ref{CS-on-L-infty-rem}.  To simplify the computation, we shall make use of the composite moves (C) and (T) discussed above.  

Beginning with $E_\infty$ we first take the edges coming out of the single vertex and partition them into two sets, one with a single edge and one with countably many edges, and perform an outsplitting with respect to this partition.  Second, we choose a single edge $e$ with source and range equal to the bottom vertex, partition the edges coming out of the bottom vertex into the the singleton set $\{ e \}$ and the set of all other edges coming out of the bottom vertex, and perform an outsplitting with respect to this partition.  (Note that since $s(e) = r(e)$, the edge $e$ gets split into $e_1$ and $e_2$.)  Third, we choose an edge $f$ going from the bottom vertex to the upper-left vertex, partition the edges coming out of the bottom vertex into the the singleton set $\{ f \}$ and the set of all other edges coming out of the bottom vertex, and perform an outsplitting with respect to this partition.  Fourth, we collapse at the center vertex.

$$\xymatrix{ 
		\bullet \ar@{=>}@(ul,ur) 
	} 
	\quad
	{(O) \atop \rightsquigarrow}
	\quad
	\xymatrix{  \bullet \ar@(ul,ur) \ar@/^/[d]\\
		\bullet \ar@{=>}@(dl,dr)_e \ar@{=>}@/^/[u]
	} 
	\qquad
	{(O) \atop \rightsquigarrow}
	\quad
	\xymatrix{  \bullet \ar@(ul,ur) \ar@/^/[dr]  \ar[rr] & & \bullet \ar@(ul,ur)^{e_1}  \ar@/_/[dl]_<>(.6){e_2}  \\
		& \bullet \ar@{=>}@(dl,dr) \ar@{=>}@/^/[ul]^f \ar@{=>}@/_/[ur] &
	} 
	\qquad 
	{(O) \atop \rightsquigarrow}
	\quad
	\xymatrix{  \bullet \ar@(ul,ur) \ar@/^/[ddr]  \ar[rr] \ar@/_/[rd] & & \bullet \ar@(ul,ur)  \ar@/_/[ddl] \ar[dl]  \\
		& \bullet \ar@/_/[lu]_<>(0.4)f & \\
		& \bullet \ar@{=>}@(dl,dr) \ar@{=>}@/^/[uul] \ar@{=>}@/_/[uur] \ar@{=>}[u] &
	} 
$$

$$
         \qquad
	{(C) \atop \rightsquigarrow}
	\qquad
	\xymatrix{  \bullet \ar@(ul,dl) \ar@(ul,ur)  \ar@/^/[dr]  \ar@/^/[rr] & & \bullet \ar@(ul,ur)  \ar@/_/[dl] \ar@/^/[ll]   \\
		& \bullet \ar@{=>}@(dl,dr) \ar@{=>}@/^/[ul] \ar@{=>}@/_/[ur] &
	} 
$$

$ $

$ $

In the other direction, we begin with the graph $E_\infty^-$, partition the edges coming out of the upper-right vertex into $\{ e, g \}$ and $\{ f \}$, and perform an outsplitting with respect to this partition.  (Note that since $s(f) = r(f)$, the edge $f$ gets split into $f_1$ and $f_2$.)  Second, we collapse at the middle vertex.  Third, we choose a path $hi$ and perform Move~(T) to produce infinitely many edges from the bottom vertex to the upper-right vertex.  Fourth, we choose a path $jk$ and perform Move~(T) to produce infinitely many edges from the bottom vertex to the upper-left vertex.  We observe that this is the same graph we obtained above, and thus we have shown how to turn $E_\infty$ into $E_\infty^-$ using our moves.

$$\xymatrix{ 
	  \bullet \ar@(dl,ul) \ar@/^/[r] & \bullet \ar@(u,r)^f \ar@/^/[l]^e \ar@/^/[d]^g  \\
	  & \ar@/^/[u] \bullet \ar@{=>}@(dl,dr) 
	}
	\quad 
	{(O) \atop \rightsquigarrow}
	\quad
	\xymatrix{  \bullet \ar@(ul,ur) \ar[rr] \ar@/^/[rd] & & \bullet \ar@(ul,ur)^{f_1} \ar[dl]_{f_2}  \\
		& \bullet \ar@/^/[lu]^<>(0.45)e \ar@/^/[d]^<>(0.3)g & \\
		& \bullet \ar@{=>}@(dl,dr) \ar@/^/[u] \ar@/_/[uur] &
	}  
	\quad
	{(C) \atop \rightsquigarrow}
	\qquad
	\xymatrix{  \bullet \ar@(ul,dl) \ar@(ul,ur) \ar@/^/[dr] \ar@/^/[rr] & & \bullet \ar@(ul,ur)  \ar@/_/[dl] \ar@/^/[ll]   	\\		& \bullet \ar@{=>}@(dl,dr)_h \ar@/_/[ur]_i \ar@/^/[ul] &
	} 
$$

$ $

$ $

$$
         \qquad \qquad \qquad
	{(T) \atop \rightsquigarrow}
	\qquad
	\xymatrix{  \bullet \ar@(ul,dl) \ar@(ul,ur)  \ar@/^/[dr] \ar@/^/[rr] & & \bullet \ar@(ul,ur)  \ar@/_/[dl] \ar@/^/[ll]   \\
		& \bullet \ar@{=>}@(dl,dr)_j \ar@{=>}@/_/[ur]  \ar@/^/[ul]^k &
	} 
	         \qquad
	{(T) \atop \rightsquigarrow}
	\qquad
	\xymatrix{  \bullet \ar@(ul,dl) \ar@(ul,ur)  \ar@/^/[dr]  \ar@/^/[rr] & & \bullet \ar@(ul,ur)  \ar@/_/[dl] \ar@/^/[ll]   \\
		& \bullet \ar@{=>}@(dl,dr) \ar@{=>}@/^/[ul] \ar@{=>}@/_/[ur] &
	} 
$$

\end{example}

\section{Classification up to isomorphism} \label{isomorphism-sec}

In \cite[Theorem~2.5]{ALPS} the authors were able to use their classification up to Morita equivalence for simple Leavitt path algebras of finite graphs to obtain a classification up to isomorphism.  This was accomplished by their work in \cite[Proposition~2.4]{ALPS} where they apply a result of Huang to the shift spaces of the graphs in order to find a flow equivalence that induces a given automorphism of the $K_0$-group.  Huang's result can be found in \cite[Theorem~1.1]{Hua2} with details given in \cite[Theorem~2.15]{Hua1}.  Unfortunately, when our graphs have an infinite number of edges, the ``shift spaces" of the graphs have an infinite alphabet and therefore are not shift spaces in the traditional sense, so Huang's result does not apply.  Although we are unable to obtain a classification up to isomorphism for all unital simple Leavitt path algebras of infinite graphs, we can do so in the special case when the class of the unit in the $K_0^\textnormal{alg}$-group is an automorphism invariant.

\begin{definition}
If $G$ is a group, we say that an element $g \in G$ is \emph{automorphism invariant} if $\phi (g) = g$ for all $\phi \in \aut G$.
\end{definition}

\begin{remark}
For any group $G$, we see that the identity element $0 \in G$ is always automorphism invariant.  However, there are nonzero examples: In the group $\Z_4$ the element $[2] \in \Z_4$ is the only nonzero element of order $2$, and hence $[2]$ is automorphism invariant.  Likewise, in the group $\Z_{4} \oplus \Z$, the element $([2], 0 )$ is automorphism invariant. 
\end{remark}

\begin{remark}
Note that for any element of a group, the property of being an automorphism invariant is invariant under isomorphism; in other words, if $G$ and $H$ are groups, $\psi : G \to H$ is a group isomorphism, and $g \in G$ is automorphism invariant, then $\psi(g) \in H$ is also automorphism invariant.
\end{remark}

The argument in the proof of $(2) \implies (1)$ of the following proposition was pointed out to us by Gene Abrams.  It is a direct adaptation of Cuntz's argument in \cite[Theorem~6.5]{Ro} to the algebraic setting.

\begin{proposition} \label{unital-classification-zero-class-prop}
Let $E$ and $F$ be simple graphs that each have a finite number of vertices and an infinite number of edges, and let $K$ be any field.  If $[1_{ L_{K} (E)}]_0$ is an automorphism invariant element of $K_0^\textnormal{alg} (L_K(E))$, then the following are equivalent:
\begin{itemize}
\item[(1)] $L_{K} (E) \cong L_{K} (F)$ (as rings).
\item[(2)] There exists an isomorphism $\alpha : K_0^\textnormal{alg} (L_K(E)) \to K_0^\textnormal{alg} (L_K(F))$ with $\alpha ( [1_{L_K(E)} ]_0 ) = [1_{L_K(F)}]_0$, and $|E^0_\textnormal{sing}| = |F^0_\textnormal{sing}|$.
\item[(3)] There exists an isomorphism $\alpha : K_0^\textnormal{top} (C^*(E)) \to K_0^\textnormal{top} (C^*(F))$ with $\alpha ( [1_{C^*(E)} ]_0 ) = [1_{C^*(F)}]_0$, and $K_1^\textnormal{top} (C^*(E)) \cong K_1^\textnormal{top} (C^*(F))$.
\item[(4)] $C^*(E) \cong C^*(F)$ (as $*$-algebras).
\end{itemize}
If, in addition, $K$ is a field with no free quotients, then each of the above statements is also equivalent to the following statement:
\begin{itemize}
\item[(5)] There exists an isomorphism $\alpha : K_0^\textnormal{alg} (L_K(E)) \to K_0^\textnormal{alg} (L_K(F))$ with $\alpha ( [1_{L_K(E)} ]_0 ) = [1_{L_K(F)}]_0$, and $K_1^\textnormal{alg} (L_K(E)) \cong K_1^\textnormal{alg} (L_K(F))$.
\end{itemize}
\end{proposition}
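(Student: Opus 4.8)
The plan is to establish the two-cycle $(1)\implies(2)\implies(1)$ together with the equivalences $(2)\iff(3)\iff(4)$, and then $(2)\iff(5)$ under the extra hypothesis that $K$ has no free quotients. I would begin by recording the standing observation that, since $E$ and $F$ each have a finite number of vertices and an infinite number of edges, each contains an infinite emitter, so Proposition~\ref{inf-emitter-implies-pi-prop} makes $L_K(E)$ and $L_K(F)$ purely infinite simple unital rings and $C^*(E)$ and $C^*(F)$ purely infinite simple unital $C^*$-algebras. This is what licenses both the Kirchberg--Phillips theorem and the Cuntz-style argument below. The direction $(1)\implies(2)$ is routine: a ring isomorphism $L_K(E)\cong L_K(F)$ induces an isomorphism on $K_0^\textnormal{alg}$ carrying one identity class to the other (ring isomorphisms preserve the unit), and since an isomorphism is in particular a Morita equivalence, Corollary~\ref{same-number-singular-cor} gives $|E^0_\textnormal{sing}|=|F^0_\textnormal{sing}|$.

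The main work is $(2)\implies(1)$, which adapts Cuntz's argument. Write $R=L_K(E)$ and $S=L_K(F)$. Forgetting the unit, $(2)$ supplies an isomorphism $K_0^\textnormal{alg}(R)\cong K_0^\textnormal{alg}(S)$, and together with $|E^0_\textnormal{sing}|=|F^0_\textnormal{sing}|$ Theorem~\ref{field-does-not-matter-thm} produces a Morita equivalence between $R$ and $S$. Because both rings are unital, this equivalence is implemented by a finitely generated projective generator $P$ with $S\cong\operatorname{End}_R(P)$, and the induced isomorphism $\gamma\colon K_0^\textnormal{alg}(S)\to K_0^\textnormal{alg}(R)$ sends $[1_S]_0$ to $[P]_0$. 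Now $\gamma\circ\alpha$ is an automorphism of $K_0^\textnormal{alg}(R)$, so the hypothesis that $[1_R]_0=[1_{L_K(E)}]_0$ is automorphism invariant forces $[P]_0=(\gamma\circ\alpha)([1_R]_0)=\gamma([1_S]_0)=[1_R]_0$. By the structure theory of purely infinite simple rings, finitely generated projective modules are classified by their class in $K_0^\textnormal{alg}$, so $[P]_0=[R_R]_0$ yields $P\cong R$ as right $R$-modules; hence $S\cong\operatorname{End}_R(P)\cong\operatorname{End}_R(R)\cong R$, that is, $L_K(F)\cong L_K(E)$.

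For $(2)\iff(3)$ I would transport everything across the natural isomorphisms of Proposition~\ref{K-theory-comp-prop}, which identify $K_0^\textnormal{top}(C^*(E))$ with $K_0^\textnormal{alg}(L_K(E))$ through $[p_v]_0\mapsto[\delta_v]$ and $[v]_0\mapsto[\delta_v]$; since $1_{C^*(E)}=\sum_{v}p_v$ and $1_{L_K(E)}=\sum_{v}v$, the class of the identity corresponds to the class of the identity, so a unit-preserving $K_0$-isomorphism on one side yields one on the other. The remaining gap between ``$|E^0_\textnormal{sing}|=|F^0_\textnormal{sing}|$'' in $(2)$ and ``$K_1^\textnormal{top}(C^*(E))\cong K_1^\textnormal{top}(C^*(F))$'' in $(3)$ is bridged precisely by Corollary~\ref{K0+K1-is-K1+sing-cor}, using that $K_0^\textnormal{top}(C^*(E))\cong K_0^\textnormal{alg}(L_K(E))$. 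The equivalence $(3)\iff(4)$ is the unital Kirchberg--Phillips classification theorem for the implication $(3)\implies(4)$ (applicable since $C^*(E)$, $C^*(F)$ are unital Kirchberg algebras satisfying the UCT) and functoriality of topological $K$-theory for $(4)\implies(3)$. Finally, when $K$ has no free quotients, $(2)\iff(5)$ follows because both statements share the same unit-preserving $K_0^\textnormal{alg}$-isomorphism $\alpha$, and Corollary~\ref{K1-tell-number-singular-cor} shows that, given such an isomorphism of $K_0^\textnormal{alg}$-groups, the conditions ``$|E^0_\textnormal{sing}|=|F^0_\textnormal{sing}|$'' and ``$K_1^\textnormal{alg}(L_K(E))\cong K_1^\textnormal{alg}(L_K(F))$'' are equivalent.

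I expect the main obstacle to be the $(2)\implies(1)$ step: the substance lies in realizing the abstract Morita equivalence by a concrete progenerator, tracking that the induced $K_0^\textnormal{alg}$-map sends the unit class to $[P]_0$, and invoking the purely infinite simple structure so that equality of $K_0^\textnormal{alg}$-classes upgrades to an honest isomorphism of modules. This is exactly the point where the automorphism invariance of $[1_{L_K(E)}]_0$ is indispensable, since without it the composite $\gamma\circ\alpha$ could move the unit class to some other $[P]_0$ and the argument would only recover Morita equivalence rather than isomorphism.
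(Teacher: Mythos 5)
Your proposal is correct and follows essentially the same route as the paper: the cycle $(1)\Rightarrow(2)\Rightarrow(1)$ via the Cuntz-style argument (automorphism invariance of $[1_{L_K(E)}]_0$ forcing the unit classes to match, then the Ara--Goodearl--Pardo classification of nonzero finitely generated projectives over purely infinite simple rings to upgrade equality in $K_0^{\textnormal{alg}}$ to an isomorphism), with $(2)\iff(3)$ by transporting across Proposition~\ref{K-theory-comp-prop}, $(3)\iff(4)$ by Kirchberg--Phillips, and $(2)\iff(5)$ by Corollary~\ref{K1-tell-number-singular-cor}. The only cosmetic difference is that you phrase the key step module-theoretically via a progenerator $P$ with $[P]_0=[R]_0$, whereas the paper realizes the Morita equivalence as a ring isomorphism $M_\infty(L_K(E))\cong M_\infty(L_K(F))$ and works with equivalent idempotents; these are two formulations of the same argument.
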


\begin{proof}  As in the first paragraph of the proof of Theorem~\ref{class-unital-inf-LPA-thm} we see that $C^*(E)$ and $C^*(F)$ are purely infinite simple $C^*$-algebras, and that $L_K(E)$ and $L_K(F)$ are purely infinite simple rings.

The equivalence $(2) \iff (3)$ follow from the equivalences in Theorem~\ref{field-does-not-matter-thm}(b)(e) together with Proposition~\ref{K-theory-comp-prop}, which shows $K_0^\textnormal{alg} (L_K(E)) \cong K_0^\textnormal{top} (C^*(E))$ via an isomorphism taking $[1_{L_K(E)}]_0$ to $[1_{C^*(E)}]_0$.  The equivalence $(3) \iff (4)$ follow from the Kirchberg-Phillips classification theorem.  The implication $(1) \implies (2)$ follows from the fact that algebraic $K$-theory is functorial and any ring isomorphism from $L_K(E)$ to $L_K(F)$ must take the unit of $L_K(E)$ to the unit of $L_K(F)$, and from Theorem~\ref{field-does-not-matter-thm}(a)(b).

To verify $(2) \implies (1)$, suppose that (2) holds.  It follows from Theorem~\ref{field-does-not-matter-thm} that $L_{K} (E)$ is Morita equivalent to $L_{K} (F)$, and \cite[Corollary~9.11]{AT1} implies that there exists a ring isomorphism $\phi : M_{\infty} \left( L_{K} (E) \right) \rightarrow M_{\infty} \left( L_{K} (F) \right)$.   (We mention that for rings $R$ and $S$ with countable sets of enough units, which include the Leavitt path algebras, $R$ is Morita equivalent to $S$ if and only if $M_\infty (R) \cong M_\infty(S)$.  This is established in \cite[Theorem~5 and Remarks~1 and 2, p. 412]{AAM}.)  For any ring $R$ and any $x \in R$, let $x \otimes E_{11} \in M_\infty(R)$ denote the matrix in $M_\infty (R)$ with $x$ in the $(1,1)$-entry and $0$ elsewhere.  Define $e := 1_{L_K(E)} \otimes E_{11} \in M_\infty(L_K(E))$ and $f := 1_{L_K(F)} \otimes E_{11} \in M_\infty(L_K(F))$, and note that both $e$ and $f$ are idempotents.  In addition, since $L_K(E) \otimes E_{11}$ is a full corner of $M_\infty(L_K(E))$, the inclusion $x \hookrightarrow x \otimes E_{11}$ induces an isomorphism $T_E : K_0^{\textnormal{alg}}(L_K(E)) \to K_0^{\textnormal{alg}}(M_\infty(L_K(E)))$.  Likewise, since $L_K(F) \otimes E_{11}$ is a full corner of $M_\infty(L_K(F))$, the inclusion $x \hookrightarrow x \otimes E_{11}$ induces an isomorphism $T_F : K_0^{\textnormal{alg}}(L_K(F)) \to K_0^{\textnormal{alg}}(M_\infty(L_K(F)))$.   Then, if we let $\phi_0 := K_0^\textnormal{alg}(\phi)$, we see $T_E^{-1} \circ \phi_0^{-1} \circ T_F \circ \alpha : K_0^\textnormal{alg}(L_K(E)) \to K_0^\textnormal{alg}(L_K(E))$ is an automorphism, and by the hypothesis that  $[1_{ L_{K} (E)}]_0$ is automorphism invariant, we have $T_E^{-1} \circ \phi_0^{-1} \circ T_F \circ \alpha ( [1_{ L_{K} (E)}]_0 ) = [1_{ L_{K} (E)}]_0$, and $T_F (\alpha ( [1_{ L_{K} (E)}]_0 )) = \phi_0(T_E ( [1_{ L_{K} (E)}]_0))$.  Thus
\begin{align*}
[\phi(e)]_0 &= \phi_0 ( [e]_0 ) = \phi_0(T_E ( [1_{ L_{K} (E)}]_0)) = T_F (\alpha ( [1_{ L_{K} (E)}]_0 )) \\
&= T_F ([1_{ L_{K} (F)}]_0 ) = [f]_0
\end{align*}
in $K_0^\textnormal{alg}(M_\infty(L_K(F)))$.  Since $L_K(F)$ is purely infinite, $M_\infty(L_K(F))$ is also purely infinite, and by \cite[Corollary~2.2]{AbrGoodParPI}, there exist $x, y \in M_{\infty} \left( L_{K} (F) \right)$ such that $xy = \phi (e)$ and $yx = f$.  If we define $\psi : e M_{\infty} \left( L_{K} (E) \right) e \rightarrow f  M_{\infty} \left( L_{K} (F) \right) f$ by $\psi ( a ) = y \phi(a) x$, then (using the fact that $xy = \phi (e)$ and $yx = f$) one can see that $\psi$ is a ring isomorphism with inverse $z \mapsto x \phi^{-1}(z) y$.  Since $L_{K}(E) \cong e M_{\infty} \left( L_{K} (E) \right) e$ and $L_{K} (F) \cong f M_{\infty} \left( L_{K} (F) \right) f$, it follows that $L_{K}(E) \cong L_{K} (F)$.

Finally, if $K$ is a field with no free quotients, then $(5) \iff (2)$ follows from Corollary~\ref{K1-tell-number-singular-cor}.
\end{proof}

\begin{remark}
We conjecture that Proposition~\ref{unital-classification-zero-class-prop} is still true when the hypothesis that $[1_{ L_{K} (E)}]_0$ is automorphism invariant is removed.
\end{remark}

\begin{remark}
Proposition~\ref{unital-classification-zero-class-prop} shows that if $E$ is a simple graph with a finite number of vertices and an infinite number of edges, if $K$ is a field, and if $[1_{ L_{K} (E)}]_0$ is automorphism invariant of $L_K(E)$, then 
\begin{equation} \label{iso-inv-1-eq}
((K_0^\textnormal{alg}(L_K(E)), [1_{ L_{K} (E)}]_0), |E^0_\textnormal{sing}|)
\end{equation}
is a complete isomorphism invariant of $L_K(E)$ among the simple unital Leavitt path algebras over $K$.  Moreover, if $K$ is also a field with no free quotients, then this invariant may be replaced by 
\begin{equation} \label{iso-inv-2-eq}
((K_0^\textnormal{alg}(L_K(E)), [1_{ L_{K} (E)}]_0), K_1^\textnormal{alg} (L_K(F))).
\end{equation}
\end{remark}

Given the isomorphism invariants of \eqref{iso-inv-1-eq} and \eqref{iso-inv-2-eq}, and the Morita equivalence invariants of \eqref{invariant-1-eq} and \eqref{invariant-2-eq}, this raises the question as to what the ranges of these invariants are --- in particular, which finitely generated abelian groups may be realized by $K_0^{\textnormal{alg}}(L_K(E))$ and which automorphism invariant elements of $K_0^{\textnormal{alg}}(L_K(E))$ may be attained as the position of $[1_{L_K(E)}]_0$, as well as which values may be attained by $|E^0_\textnormal{sing}|$ and which abelian groups may be attained as $K_1^{\textnormal{alg}}(L_K(E))$.  The following result shows that essentially all are possible, subject only to the necessary constraints on the $K_1^\textnormal{alg} (L_K(E))$ group shared by all Leavitt path algebras, as described in Proposition~\ref{K-theory-comp-prop} (cf.~Corollary~\ref{sing-vertices-SME-inv-cor}).

\begin{proposition} \label{realization-prop}
Let $G$ be a finitely generated abelian group, let $F$ be a free abelian group with $\rank F \leq \rank G$, and let $g$ be an element of $G$.  Then there exist a graph $E$ with the following properties:
\begin{itemize}
\item[(1)] Every vertex of $E$ is the base point of at least two distinct loops (i.e., for any $v \in E^0$ there exist $e, f \in E^1$ with $e \neq f$ and $s(e) = r(e) = s(f) = f(f) = v$),
\item[(2)] $E$ is transitive (so that, in particular, $L_K(E)$ is simple and purely infinite for any field $K$), 
\item[(3)] $E^0$ is finite (so that, in particular, $L_K(E)$ is unital for any field $K$), 
\item[(4)] $|E^0_\textnormal{sing}| = \rank G - \rank F$, and
\item[(5)] for any field $K$, 
\begin{align*}
(K_0^\textnormal{alg} (L_K(E)), [1_{L_K(E)}]_0) \cong (G, g)
\end{align*}
and 
\begin{align*}
K_1^\textnormal{alg} (L_K(E)) &\cong F \oplus K^{\times} / \langle x^{ d_{1} } : x \in K^{\times} \rangle \oplus \cdots \oplus K^{\times} / \langle x^{ d_{k} } : x \in K^{\times} \rangle \\
			&\qquad \qquad \oplus (K^\times)^{m}
\end{align*} where $m := \rank G$ and $( d_{1} , \dots, d_{k} )$ are the invariant factors of $G$.
\end{itemize} 
\end{proposition}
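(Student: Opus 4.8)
The plan is to build the graph $E$ explicitly and read off its $K$-theory from Proposition~\ref{K-theory-comp-prop} and Corollary~\ref{K-theory-for-finite-vertices-cor}. Write $G \cong \Z_{d_1} \oplus \cdots \oplus \Z_{d_k} \oplus \Z^m$ with $m = \rank G$ and invariant factors $d_1 \mid \cdots \mid d_k$, and put $r := \rank F$, so the required number of singular vertices is $m - r$. By Proposition~\ref{K-theory-comp-prop}(b) everything is governed by the matrix $M := \left( \begin{smallmatrix} B_E^t - I \\ C_E^t \end{smallmatrix}\right) \colon \Z^{E^0_\reg} \to \Z^{E^0}$: I must arrange $\coker M \cong G$ with the all-ones vector (which represents $[1_{L_K(E)}]_0$ under the isomorphism of Proposition~\ref{K-theory-comp-prop}) mapping to $g$, together with $\ker M \cong \Z^r$ and $|E^0_\textnormal{sing}| = m - r$. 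The point is that once these are achieved, the displayed formula for $K_1^\textnormal{alg}$ is automatic from Corollary~\ref{K-theory-for-finite-vertices-cor}, since the number of zero columns is $m' = \rank(\ker M) = r$ and $m' + |E^0_\textnormal{sing}| = m$, while the invariant factors of $K_0^\textnormal{alg}$ are the $d_i$.

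The construction exploits the crucial observation that the columns of $M$ are indexed only by the regular vertices, so edges emitted by an infinite emitter are invisible to $M$. I would therefore assemble $E$ from the following gadgets, assuming $m - r \geq 1$ (the case $m=r$, a finite graph, is discussed below). First, for each invariant factor take a single vertex $t_\ell$ carrying $d_\ell + 1$ loops; this contributes a $\Z_{d_\ell}$ summand. Second, take $r$ \emph{kernel blocks} $\{a_j, b_j\}$, each a pair of vertices with two loops apiece and one edge in each direction, so the block's contribution to $B_E^t - I$ is $\left(\begin{smallmatrix} 1 & 1 \\ 1 & 1 \end{smallmatrix}\right)$; such a block contributes one $\Z$ to $\coker M$ and one $\Z$ to $\ker M$. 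Third, introduce $m - r$ infinite emitters $H_1, \dots, H_{m-r}$ and one \emph{grounding block} $\{c,d\}$ of the same shape as a kernel block. I would then wire up the graph so that the grounding vertex $c$, both vertices of every kernel block, and every torsion vertex each emit a single edge to $H_1$, while every emitter $H_i$ emits (infinitely often) to all vertices, and no regular vertex emits to $H_2, \dots, H_{m-r}$. Since every $H_i$ reaches every vertex and every regular vertex reaches $H_1$, the graph is strongly connected, hence cofinal and simple (Condition~(L) is immediate from the surplus of loops), every vertex is a base point of at least two loops, and the singular vertices are exactly $H_1, \dots, H_{m-r}$.

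The verification is then a direct cokernel computation. The relation from the grounding vertex $d$ gives $[e_c] + [e_d] = 0$, whence the relation from $c$ forces $[f_{H_1}] = 0$; with $[f_{H_1}] = 0$ in hand, each kernel block yields $[e_{a_j}] + [e_{b_j}] = 0$ (a free $\Z$ together with a kernel vector $(1,-1)$, preserved precisely because both of its vertices point to $H_1$), each torsion vertex yields $d_\ell [e_{t_\ell}] = 0$, and $H_2, \dots, H_{m-r}$ contribute free generators, so that $\coker M \cong \Z_{d_1} \oplus \cdots \oplus \Z_{d_k} \oplus \Z^m \cong G$ and $\ker M \cong \Z^r$. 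One checks by rank--nullity (as in Corollary~\ref{K-theory-for-finite-vertices-cor}) that there is no hidden extra kernel or rank, since $\rank \coker M = |E^0_\textnormal{sing}| + \rank \ker M$ is forced.

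The delicate point --- and the step I expect to be the main obstacle --- is forcing the class of the identity to land on the prescribed element $g$. In the construction above the identity $\sum_{v} [v]$ equals $\sum_\ell [e_{t_\ell}] + \sum_{i \geq 2} [f_{H_i}]$, which is a specific element whose free part is primitive; to realize an arbitrary $g$ one must make the gadgets tunable, for instance by adjusting loop multiplicities and adding auxiliary edges into the class-zero emitter $H_1$ so that a chosen generator enters the identity with a prescribed coefficient, while keeping the off-diagonal entries of $B_E^t - I$ nonnegative. Because an automorphism of $G = \Z_{d_1}\oplus\cdots\oplus\Z_{d_k}\oplus\Z^m$ acts on the free part only through $\mathrm{GL}_m(\Z)$, the free part of the identity must be brought into the correct $\mathrm{GL}_m(\Z)$-orbit (i.e.\ the correct content), which is exactly what the tuning must accomplish. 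Finally, in the excluded case $m = r$ there are no infinite emitters to carry the connectivity, and one instead needs a finite strongly connected graph with two loops at every vertex realizing $(G,g)$ with $\ker(A_E^t - I) \cong \Z^m$; this is a classical matrix-realization problem that I would handle separately.
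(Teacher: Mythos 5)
Your reduction of the problem to arranging $\coker M \cong G$ (with the class of the unit landing on $g$), $\ker M \cong \Z^{\rank F}$, and $|E^0_\textnormal{sing}| = \rank G - \rank F$ is sound, and your observation that the $K_1^\textnormal{alg}$ formula then follows automatically from Corollary~\ref{K-theory-for-finite-vertices-cor} is correct. But the proof is not complete: you yourself flag the two hardest points and then leave both unresolved. First, the realization of an \emph{arbitrary} prescribed $g \in G$ as $[1_{L_K(E)}]_0$ is exactly the nontrivial content of a range-of-invariant result; your gadget graph pins the unit class to one specific element (whose free part is primitive), and the promised ``tuning'' of loop multiplicities and auxiliary edges that would move it into an arbitrary $\mathrm{GL}_m(\Z)$-orbit and arbitrary torsion coset is never carried out. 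Saying the tuning ``must accomplish'' this is a restatement of the goal, not an argument. Second, the case $\rank F = \rank G$ (a finite graph with no singular vertices) is deferred as ``a classical matrix-realization problem that I would handle separately''; it is not handled, and it is not covered by your infinite-emitter wiring, which uses $H_1$ as the hub for connectivity.

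For comparison, the paper does not build the graph at all: it invokes \cite[Proposition~3.6]{EKTW}, which already produces a transitive graph with finitely many vertices, at least two loops at every vertex, $|E^0_\textnormal{sing}| = \rank G - \rank F$, and $(K_0^\textnormal{top}(C^*(E)),[1_{C^*(E)}]) \cong (G,g)$, $K_1^\textnormal{top}(C^*(E)) \cong F$; conditions (1)--(4) and the unit-class condition come for free from that citation, and (5) then follows from Proposition~\ref{K-theory-comp-prop} and Corollary~\ref{K-theory-for-finite-vertices-cor} exactly as in your first paragraph. If you want a self-contained construction along your lines, you would need to either reproduce the combinatorial argument of that realization result (including the adjustment of the unit class and the row-finite case) or cite it; as written, the two admitted gaps are precisely where all the difficulty lives.
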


\begin{proof}
It follows from \cite[Proposition~3.6]{EKTW} that a graph satisfying Conditions (1)--(4) exists, satisfying the additional property that $$(K_0^\textnormal{top}(C^*(E)), [1_{C^*(E)}]) \cong (G,g) \qquad \text{ and } \qquad K_1^\textnormal{top} (C^*(E)) \cong F.$$  It then follows from Corollary~\ref{K-theory-for-finite-vertices-cor} and the $K$-theory computations in Proposition~\ref{K-theory-comp-prop} that (5) holds.
\end{proof}

Note that the graph of Proposition~\ref{realization-prop} has an infinite number of edges if and only if $\rank F < \rank G$.

\begin{corollary}
If $G$ is a finitely generated abelian group, $g \in G$ and $n \in \N \cup \{ 0 \}$, then there exists a graph $E$ that is transitive, has a finite number of vertices, and satisfies the following two properties:
\begin{itemize}
\item[(1)] For every field $K$, one has $K_0(L_K(E)) \cong G$ via an isomorphism taking $[1_{L_K(E)}]_0$ to $g$
\item[(2)] $|E^0_\textnormal{sing}| = n$.
\end{itemize}
(Note that $E$ has an infinite number of edges if and only if $|E^0_\textnormal{sing}| \geq 1$.)
\end{corollary}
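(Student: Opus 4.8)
The plan is to deduce this corollary directly from Proposition~\ref{realization-prop}, the only real work being to choose the free abelian group $F$ with the right rank and to keep careful track of ranks.

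First I would note that the desired conclusion forces $n \le \rank G$: by Corollary~\ref{K-theory-for-finite-vertices-cor}, any graph $E$ with a finite number of vertices satisfies $\rank K_0^\textnormal{alg}(L_K(E)) = \rank K_1^\textnormal{top}(C^*(E)) + |E^0_\textnormal{sing}|$, so if $K_0^\textnormal{alg}(L_K(E)) \cong G$ and $|E^0_\textnormal{sing}| = n$, then $\rank G \ge n$. I would therefore work under this (necessary) hypothesis and set $F := \Z^{\rank G - n}$, a free abelian group of rank $\rank G - n$. Because $n \ge 0$, we have $\rank F = \rank G - n \le \rank G$, so $F$ is a legitimate input for Proposition~\ref{realization-prop}.

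Next I would apply Proposition~\ref{realization-prop} to the triple $(G, F, g)$ to produce a graph $E$. Conditions~(2) and~(3) of that proposition make $E$ transitive with a finite number of vertices. Condition~(5) supplies, for every field $K$, an isomorphism $(K_0^\textnormal{alg}(L_K(E)), [1_{L_K(E)}]_0) \cong (G, g)$, which is exactly property~(1) of the corollary. Condition~(4) gives $|E^0_\textnormal{sing}| = \rank G - \rank F = \rank G - (\rank G - n) = n$, which is property~(2).

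Finally, for the parenthetical assertion I would invoke property~(1) of Proposition~\ref{realization-prop}, which says every vertex of $E$ is the base point of at least two loops, so $E$ has no sinks; since $E^0$ is finite, the singular vertices of $E$ are precisely its infinite emitters, and a graph with finitely many vertices has an infinite number of edges if and only if it has an infinite emitter. Thus $E$ has infinitely many edges if and only if $|E^0_\textnormal{sing}| \ge 1$. I expect no genuine obstacle here: the entire content lies in the rank bookkeeping of the first step --- observing that $n \le \rank G$ is forced and choosing $\rank F = \rank G - n$ to complement it --- after which every claim is read off directly from Proposition~\ref{realization-prop}.
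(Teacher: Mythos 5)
Your proposal is correct and is exactly the argument the paper intends: the corollary is an immediate application of Proposition~\ref{realization-prop} with $F = \Z^{\rank G - n}$, and your justification of the parenthetical remark matches the paper's own observation that the graph of that proposition has infinitely many edges if and only if $\rank F < \rank G$. You are also right that the constraint $n \le \rank G$ is forced --- by Corollary~\ref{K-theory-for-finite-vertices-cor} any graph with finitely many vertices satisfies $|E^0_\textnormal{sing}| \le \rank K_0^\textnormal{alg}(L_K(E))$ --- so the corollary as printed, which quantifies over all $n \in \N \cup \{0\}$, tacitly omits a necessary hypothesis; your restriction is not a weakening of the argument but a correction to the statement.
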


We conclude this section by showing that if we apply the Cuntz splice to every vertex of a graph, then the class of the unit of the corresponding Leavitt path algebra is equal to the zero element in the $K_0^\textnormal{alg}$-group.  Consequently, we have classification up to isomorphism of such graphs.

\begin{proposition} \label{Cuntz-splice-zeros-out-unit-prop}
Let $E$ be a graph with a finite number of vertices, and let $\widetilde{E}$ be the graph formed by performing a Cuntz splice to every vertex of $E$.  Then for any field $K$, we have $[1_{L_K(\widetilde{E})}]_0 = 0$ in $K_0^\textnormal{alg} (L_K(E))$.
\end{proposition}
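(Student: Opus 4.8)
The plan is to work directly with the cokernel presentation of $K_0^\textnormal{alg}(L_K(\widetilde{E}))$ supplied by Proposition~\ref{K-theory-comp-prop}(b), under which each vertex class $[w]_0$ is identified with $[\delta_w]$ and, for every regular vertex $w$, one has the relation $[w]_0 = \sum_{e \in s^{-1}(w)}[r(e)]_0$. (By Proposition~\ref{effects-CS-prop}(2) the groups $K_0^\textnormal{alg}(L_K(E))$ and $K_0^\textnormal{alg}(L_K(\widetilde E))$ are isomorphic, so it is harmless to compute the class of the unit inside $K_0^\textnormal{alg}(L_K(\widetilde E))$.) Since the unit of $L_K(\widetilde E)$ is the sum of all vertices of $\widetilde E$, I would first write $[1_{L_K(\widetilde E)}]_0 = \sum_{v \in E^0}\big([v]_0 + [v_1]_0 + [v_2]_0\big)$, where $v_1, v_2$ are the two vertices adjoined when the Cuntz splice is performed at $v$, thereby reducing the whole problem to understanding the classes $[v_1]_0$ and $[v_2]_0$.

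The key observation is that the edges created by a Cuntz splice at $v$ are entirely local: $v_1$ and $v_2$ are regular vertices, with $s^{-1}(v_1)$ consisting of edges to $v$, to $v_2$, and to $v_1$, and $s^{-1}(v_2)$ consisting of edges to $v_1$ and to $v_2$. Applying the cokernel relation at $v_2$ gives $[v_2]_0 = [v_1]_0 + [v_2]_0$, hence $[v_1]_0 = 0$; applying it at $v_1$ gives $[v_1]_0 = [v]_0 + [v_2]_0 + [v_1]_0$, hence $[v_2]_0 = -[v]_0$. Because these relations involve only $v$, $v_1$, and $v_2$, they hold simultaneously for every $v \in E^0$, regardless of the splices performed at the other vertices. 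Substituting into the displayed sum then yields $[1_{L_K(\widetilde E)}]_0 = \sum_{v \in E^0}\big([v]_0 + 0 - [v]_0\big) = 0$, which is the desired conclusion.

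The main point to verify carefully — and the only place an objection could arise — is that splicing at every vertex simultaneously does not create relations that tangle distinct vertices together. This is exactly the locality remark above: since each adjoined pair $v_1, v_2$ emits edges only into $\{v, v_1, v_2\}$, the two relations producing $[v_1]_0 = 0$ and $[v_2]_0 = -[v]_0$ are unaffected by the presence of the other adjoined vertices, and the corner of the vertex matrix governing them is precisely the $2 \times 2$ block appearing in the proof of Proposition~\ref{effects-CS-prop}. I would also note that the computation never uses whether $v$ itself is regular or singular, nor the hypothesis that $v$ supports two simple cycles; these assumptions enter only to guarantee that $\widetilde E$ is defined, whereas the class computation depends solely on the fixed local structure attached at each vertex.
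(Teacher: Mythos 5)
Your proposal is correct and follows essentially the same route as the paper's proof: the relation at $v_2$ yields $[v_1]_0 = 0$, the relation at $v_1$ yields $[v_2]_0 = -[v]_0$, and summing $[v]_0 + [v_1]_0 + [v_2]_0 = 0$ over $v \in E^0$ kills the class of the unit. The only cosmetic difference is that you read off the vertex relations from the cokernel presentation of Proposition~\ref{K-theory-comp-prop}(b), whereas the paper derives the identical relations directly inside the algebra from $[w]_0 = \sum_{e \in s^{-1}(w)}[ee^*]_0 = \sum_{e \in s^{-1}(w)}[r(e)]_0$.
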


\begin{proof}
Let $v \in E^0$.  Label the portion of $\widetilde{E}$ where a Cuntz splice has been added to $v$ as follows:
$$\xymatrix{ 
		 v \ar@/^0.5em/[r]^{e_1} & v_1 \ar@/^0.5em/[l]^{e_2} \ar@/^0.5em/[r]^{f_1} \ar@(ul,ur)^{h_1} & v_2 \ar@/^0.5em/[l]^{f_2} \ar@(ur,dr)^{h_2}
	}.
$$
Note that in $K_0^\textnormal{alg} (L_K(E))$ we have
$$[v_2]_0 = [f_2f_2^* + h_2h_2^*]_0 = [f_2f_2^*]_0 + [h_2h_2^*]_0 =  [f_2^*f_2]_0 + [h_2^*h_2]_0 = [v_1]_0 + [v_2]_0$$
and canceling gives $[v_1]_0 = 0$.  By similar reasoning,
\begin{align*}
[v]_0 + [v_1]_0 + [v_2]_0 &= [e_2^*e_2]_0 + [h_1^*h_1]_0 + [f_1^*f_1]_0 = [e_2e_2^*]_0 + [f_1f_1^*]_0 + [h_1h_1^*]_0 \\
&= [e_2e_2^* + f_1f_1^* + h_1h_1^*]_0 = 
[v_1]_0 = 0.
\end{align*}
It follows that in $K_0^\textnormal{alg} (L_K(E))$ we have
$$[1_{L_K(\widetilde{E})}]_0 = \left[ \sum_{w \in \widetilde{E}^0} w \right]_0 =  \sum_{w \in \widetilde{E}^0} [w ]_0 = \sum_{v \in E^0} \left( [v]_0 + [v_1]_0 + [v_2]_0 \right) = 0.$$
\end{proof}

\begin{corollary}
Let $E$ and $F$ be simple graphs with a finite number of vertices and infinite number of edges, and let $\widetilde{E}$ be the graph formed by performing a Cuntz splice to each vertex of $E$.  If $K$ is a field, then $L_K(\widetilde{E}) \cong L_K(F)$ (as rings) if and only if there exists an isomorphism $\alpha : K_0^\textnormal{alg} (L_K(\widetilde{E})) \to K_0^\textnormal{alg} (L_K(F))$ with $\alpha ( [1_{L_K(\widetilde{E})} ]_0 ) = [1_{L_K(F)}]_0$, and $|\widetilde{E}^0_\textnormal{sing}| = |F^0_\textnormal{sing}|$.
\end{corollary}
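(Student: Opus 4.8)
The plan is to obtain this corollary as an immediate application of Proposition~\ref{unital-classification-zero-class-prop} to the graph $\widetilde{E}$ in place of $E$. To do this, I must first check that the hypotheses of that proposition are met: namely, that $\widetilde{E}$ is a simple graph with a finite number of vertices and an infinite number of edges, and that the class $[1_{L_K(\widetilde{E})}]_0$ is an automorphism invariant element of $K_0^\textnormal{alg}(L_K(\widetilde{E}))$.

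For the structural hypotheses I would argue as follows. Since $\widetilde{E}$ is formed from $E$ by performing a Cuntz splice at each of the finitely many vertices of $E$, and each Cuntz splice adds only two new vertices and six new edges, the graph $\widetilde{E}$ again has a finite number of vertices and retains the infinite number of edges inherited from $E$. Moreover, Proposition~\ref{effects-CS-prop}(1) guarantees that performing a Cuntz splice preserves simplicity of the graph; applying this at each vertex in turn shows that $\widetilde{E}$ is simple.

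The crucial remaining hypothesis is handled by Proposition~\ref{Cuntz-splice-zeros-out-unit-prop}, which shows that $[1_{L_K(\widetilde{E})}]_0 = 0$ in $K_0^\textnormal{alg}(L_K(\widetilde{E}))$. Since the identity element $0$ of any group is fixed by every automorphism (as noted in the remark following the definition of automorphism invariance), the class $[1_{L_K(\widetilde{E})}]_0$ is automorphism invariant. With all hypotheses verified, the equivalence $(1) \iff (2)$ of Proposition~\ref{unital-classification-zero-class-prop}, applied with $\widetilde{E}$ and $F$ in the roles of $E$ and $F$, yields precisely the stated biconditional.

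The argument is essentially bookkeeping, so I do not anticipate a genuine obstacle; the only point requiring care is confirming that every hypothesis of Proposition~\ref{unital-classification-zero-class-prop} transfers to $\widetilde{E}$, with the one nontrivial input being the vanishing of the unit's $K_0^\textnormal{alg}$-class supplied by Proposition~\ref{Cuntz-splice-zeros-out-unit-prop}. It is worth emphasizing in the write-up that this is exactly the mechanism that lets us dispense with any separate automorphism-invariance assumption: the Cuntz splice at every vertex forces $[1_{L_K(\widetilde{E})}]_0$ to the (always invariant) zero class, so the general isomorphism classification of Proposition~\ref{unital-classification-zero-class-prop} becomes unconditionally available for graphs of this form.
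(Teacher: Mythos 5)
Your proposal is correct and follows exactly the paper's own route: the paper's proof simply cites the preservation of simplicity under the Cuntz splice, Proposition~\ref{Cuntz-splice-zeros-out-unit-prop} (to get $[1_{L_K(\widetilde{E})}]_0 = 0$, hence automorphism invariant), and then the equivalence $(1) \iff (2)$ of Proposition~\ref{unital-classification-zero-class-prop}. Your write-up just makes the same bookkeeping explicit.
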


\begin{proof}
This follows from the fact that the Cuntz splice preserves simplicity, and from Proposition~\ref{unital-classification-zero-class-prop} and Proposition~\ref{Cuntz-splice-zeros-out-unit-prop}.
\end{proof}

\begin{corollary}
Let $E$ and $F$ be simple graphs with a finite number of vertices and infinite number of edges, let $\widetilde{E}$ be the graph obtained by performing a Cuntz splice to each vertex of $E$, and let $\widetilde{F}$ be the graph obtained by performing a Cuntz splice to each vertex of $F$.  If $K$ is a field, then $L_K(\widetilde{E}) \cong L_K(\widetilde{F})$ (as rings) if and only if $K_0^\textnormal{alg} (L_K(\widetilde{E})) \cong K_0^\textnormal{alg} (L_K(\widetilde{F}))$ and $|\widetilde{E}^0_\textnormal{sing}| = |\widetilde{F}^0_\textnormal{sing}|$.
\end{corollary}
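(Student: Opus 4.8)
The plan is to reduce everything to Proposition~\ref{unital-classification-zero-class-prop} applied with $\widetilde{E}$ and $\widetilde{F}$ in the roles of the two graphs, exploiting the fact that after Cuntz splicing every vertex the class of the unit becomes zero. First I would verify that $\widetilde{E}$ and $\widetilde{F}$ are themselves simple graphs with a finite number of vertices and an infinite number of edges, so that they are legitimate inputs for Proposition~\ref{unital-classification-zero-class-prop}: by Proposition~\ref{effects-CS-prop}(1) the Cuntz splice preserves simplicity, and performing finitely many Cuntz splices adds only finitely many vertices and edges, so $\widetilde{E}^0$ and $\widetilde{F}^0$ remain finite while the infinitely many original edges guarantee $\widetilde{E}^1$ and $\widetilde{F}^1$ are still infinite.

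The crucial point is the behaviour of the unit class. By Proposition~\ref{Cuntz-splice-zeros-out-unit-prop}, since $\widetilde{E}$ (respectively $\widetilde{F}$) is obtained by Cuntz splicing \emph{every} vertex of $E$ (respectively $F$), we have $[1_{L_K(\widetilde{E})}]_0 = 0$ in $K_0^\textnormal{alg}(L_K(\widetilde{E}))$ and $[1_{L_K(\widetilde{F})}]_0 = 0$ in $K_0^\textnormal{alg}(L_K(\widetilde{F}))$. As noted in the remark following the definition of automorphism invariance, the identity element $0$ of any group is automorphism invariant, so $[1_{L_K(\widetilde{E})}]_0$ is an automorphism invariant element of $K_0^\textnormal{alg}(L_K(\widetilde{E}))$ and the hypothesis of Proposition~\ref{unital-classification-zero-class-prop} is satisfied.

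I would then invoke the equivalence $(1) \iff (2)$ of Proposition~\ref{unital-classification-zero-class-prop} with $\widetilde{E}$ and $\widetilde{F}$ in place of $E$ and $F$: the Leavitt path algebras $L_K(\widetilde{E})$ and $L_K(\widetilde{F})$ are isomorphic as rings if and only if there is an isomorphism $\alpha : K_0^\textnormal{alg}(L_K(\widetilde{E})) \to K_0^\textnormal{alg}(L_K(\widetilde{F}))$ with $\alpha([1_{L_K(\widetilde{E})}]_0) = [1_{L_K(\widetilde{F})}]_0$, together with $|\widetilde{E}^0_\textnormal{sing}| = |\widetilde{F}^0_\textnormal{sing}|$. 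Because both unit classes equal $0$, the condition $\alpha([1_{L_K(\widetilde{E})}]_0) = [1_{L_K(\widetilde{F})}]_0$ reads $\alpha(0) = 0$, which holds automatically for every group isomorphism. Hence the existence of a pointed isomorphism is equivalent to the bare isomorphism $K_0^\textnormal{alg}(L_K(\widetilde{E})) \cong K_0^\textnormal{alg}(L_K(\widetilde{F}))$, and the stated biconditional follows at once.

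The argument is essentially immediate once the two input propositions are in hand, so there is no serious obstacle; the only points requiring care are confirming that the Cuntz-spliced graphs still satisfy the structural hypotheses (simplicity and the finite-vertex, infinite-edge conditions) needed to apply Proposition~\ref{unital-classification-zero-class-prop}, and observing that the vanishing of both unit classes is exactly what makes the pointed $K_0$-isomorphism condition collapse to an unpointed one.
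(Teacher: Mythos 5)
Your proposal is correct and follows essentially the same route as the paper's own proof: apply Proposition~\ref{Cuntz-splice-zeros-out-unit-prop} to see that both unit classes vanish (hence are automorphism invariant), check that the Cuntz splice preserves simplicity, and then invoke Proposition~\ref{unital-classification-zero-class-prop}, noting that the pointed $K_0$-condition collapses because every group isomorphism sends $0$ to $0$. No issues.
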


\begin{proof}
This follows from the fact that the Cuntz splice preserves simplicity, from the fact every homomorphism between groups maps the identity element to the identity element, and from Proposition~\ref{unital-classification-zero-class-prop} and Proposition~\ref{Cuntz-splice-zeros-out-unit-prop}.
\end{proof}

\section{Some interesting (counter)examples} \label{example-sec}

In this section we consider two very interesting examples that show when $E$ is a graph with a finite number of vertices and the field $K$ has free quotients, then the Morita equivalence class of $L_K(E)$ is not determined by  $K_0^\textnormal{alg}(L_K(E))$ and  $K_1^\textnormal{alg}(L_K(E))$.  In particular, this shows the hypothesis that the field has no free quotients cannot be removed from Theorem~\ref{class-unital-inf-LPA-thm}, since statement (2) of Theorem~\ref{class-unital-inf-LPA-thm} is no longer equivalent to the other statements.  It also shows that in Corollary~\ref{K1-tell-number-singular-cor}, one has $(3) \centernot \implies (2)$ for general fields.  

The following lemma will be useful in both of our examples.

\begin{lemma} \label{K2-Q-torsion-lem}
Let $\Q$ be the field of rational numbers.  Then the following statements about the algebraic $K$-groups of $\Q$ hold:
\begin{itemize}
\item[(1)] $K_0^\textnormal{alg} (\Q) \cong \Z$.
\item[(2)] $K_1^\textnormal{alg} (\Q) \cong \Q^\times \cong \Z_2 \oplus \Z \oplus \Z \oplus \ldots$.   Consequently, $\Z \oplus K_1^\textnormal{alg} (\Q) \cong K_1^\textnormal{alg} (\Q)$.
\item[(3)] $\displaystyle K_2^\textnormal{alg}(\Q) \cong \Z_2 \oplus \bigoplus_\text{$p$ prime} \Z_p^\times$.  Consequently, $K_2^\textnormal{alg}(\Q)$ is a torsion group.
\end{itemize}
\end{lemma}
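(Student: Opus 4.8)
The plan is to establish the three statements about the algebraic $K$-groups of $\Q$ by citing well-known computations from algebraic $K$-theory and number theory, rather than reproving them from scratch. Statement (1) is immediate: for any field, $K_0^\textnormal{alg}$ of the field is $\Z$, since finitely generated projective modules over a field are free and classified by their dimension. Statement (2) follows from the general fact that $K_1^\textnormal{alg}$ of any commutative field is isomorphic to its group of units; this is already invoked in Proposition~\ref{K-theory-comp-prop} via the identification $(K_1^\textnormal{alg}(K),+) \cong (K^\times, \cdot)$. Combined with Proposition~\ref{Q-times-Z-inf-prop}, which computes $(\Q^\times, \cdot) \cong \Z_2 \oplus \Z \oplus \Z \oplus \cdots$, this gives the stated decomposition. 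The consequence $\Z \oplus K_1^\textnormal{alg}(\Q) \cong K_1^\textnormal{alg}(\Q)$ is then a triviality: adjoining one more $\Z$ summand to $\Z_2 \oplus \Z^{(\infty)}$ yields an isomorphic group, since the number of $\Z$-summands is countably infinite in both cases.

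For statement (3), the plan is to invoke the classical computation of $K_2$ of the rationals. By Tate's theorem (the solution of the tame symbol / Milnor $K_2$ problem for $\Q$), one has the direct sum decomposition $K_2^\textnormal{alg}(\Q) \cong K_2(\mathbb{F}_2) \oplus \bigoplus_{p \text{ odd prime}} \mathbb{F}_p^\times$ coming from the localization sequence, where the residue field at the prime $p$ is $\mathbb{F}_p$. Since $K_2(\mathbb{F}_2)$ is trivial and the "global" contribution at the archimedean/dyadic place gives a factor of $\Z_2$, this rearranges into the form $\Z_2 \oplus \bigoplus_{p \text{ prime}} \Z_p^\times$ stated in the lemma (here $\Z_p^\times$ denotes the multiplicative group of the field $\mathbb{F}_p$, which is cyclic of order $p-1$). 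The key point for the application is simply that this is a \emph{torsion} group: each $\mathbb{F}_p^\times$ is finite of order $p-1$, so a direct sum of finite groups is torsion, and adjoining the finite group $\Z_2$ keeps it torsion.

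\textbf{The main obstacle} is locating the precise citation for the structure of $K_2^\textnormal{alg}(\Q)$ and matching its standard form to the expression written in the lemma, since different sources normalize the contribution at the prime $2$ differently. The cleanest route is to cite Milnor's book on algebraic $K$-theory, where the computation $K_2(\Q) \cong \Z/2 \oplus \bigoplus_p \mathbb{F}_p^\times$ appears as a consequence of the short exact sequence relating $K_2(\Q)$ to $K_2(\Z)$ and the tame symbols. Since the only feature of $K_2^\textnormal{alg}(\Q)$ actually used in the subsequent examples is that it is a torsion group, I would emphasize that consequence explicitly, so that even a reader uncertain about the exact isomorphism type can verify torsionness directly from the fact that each summand $\mathbb{F}_p^\times$ is a finite cyclic group. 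I do not expect the proof to require any genuine computation beyond assembling these cited facts.
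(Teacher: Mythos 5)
Your proposal is correct and follows essentially the same route as the paper: (1) and (2) are the standard facts $K_0^\textnormal{alg}(K)\cong\Z$ and $K_1^\textnormal{alg}(K)\cong K^\times$ for a field combined with Proposition~\ref{Q-times-Z-inf-prop}, and (3) is exactly the citation of Tate's theorem from Milnor's book, with torsionness read off from the finiteness of each summand. The only blemish is your intermediate rewriting of Tate's decomposition --- the $\Z_2$ summand is generated by the symbol $\{-1,-1\}$ (equivalently it is $K_2^\textnormal{alg}(\Z)$), not a contribution of $K_2(\mathbb{F}_2)$ or of an archimedean/dyadic place --- but since you ultimately cite the correct statement $K_2^\textnormal{alg}(\Q)\cong\Z_2\oplus\bigoplus_p\Z_p^\times$ and verify the torsion consequence directly, this does not affect the argument.
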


\begin{proof}
We will use some well-known facts about the algebraic $K$-theory of fields.  (We refer readers wanting more details to \cite{Gra} for a survey of the $K$-theory of fields.)  Since $\Q$ is a field, $K_0^\textnormal{alg}(\Q) \cong \Z$ and $K_1^\textnormal{alg}(\Q) \cong \Q^\times$.  It follows from Proposition~\ref{Q-times-Z-inf-prop} that $K_1^\textnormal{alg} (\Q) \cong \Z_2 \oplus \Z \oplus \Z \oplus \ldots$.  Thus $$\Z \oplus K_1^\textnormal{alg} (\Q) \cong \Z \oplus \Z_2 \oplus \Z \oplus \Z \oplus \ldots \cong \Z_2 \oplus \Z \oplus \Z \oplus \Z \oplus \ldots\cong K_1^\textnormal{alg} (\Q).$$
Hence (1) and (2) hold.  In addition, it follows from \cite[Theorem~11.6 of Tate]{Mil} that $$K_2^\textnormal{alg}(\Q) \cong \Z_2 \oplus \bigoplus_\text{$p$ prime} \Z_p^\times.$$
Thus $K_2^\textnormal{alg}(\Q)$ is a torsion group, and (3) holds.
\end{proof}

\begin{example} \label{K-theory-counter-ex}
Let $E$ and $F$ be the following graphs.

$ $

$$
E \qquad \xymatrix{ \bullet \ar@{=>}@(ul,dl)_\infty \ar@/^/[r] & \bullet  \ar@/^/[l] \ar@{=>}@(ur,dr)^\infty } \qquad \qquad F \qquad \xymatrix{  \bullet \ar@(ul,dl) \ar@(ul,ur) \ar@/^/[dr] \ar@/^/[rr] & & \bullet \ar@(ur,ul) \ar@(ur,dr)  \ar@/_/[dl] \ar@/^/[ll]   	\\		& \bullet \ar@{=>}@(dl,dr)[]_{\infty} \ar@/_/[ur] \ar@/^/[ul] &
	} 
$$

$ $

$ $

We shall consider the Leavitt path algebras $L_\Q(E)$ and $L_\Q(F)$ over the field $\Q$, and consider the $K_n^\textnormal{alg}$-groups of these algebras for $n=0,1,2$.  Note that each of $L_\Q(E)$ and $L_\Q(F)$ is purely infinite, simple, and unital.

Beginning with the graph $E$, we see that $E^0_\textnormal{reg} = \emptyset$ and $E^0 = E^0_\text{sing}$ has two elements.  Thus $K_n^\textnormal{alg} (
\Q)^{E^0_\textnormal{reg}} = 0$, and the long exact sequence of Proposition~\ref{K-theory-comp-prop}(b) implies that $K_n^\textnormal{alg} (L_\Q(E)) \cong K_n^\textnormal{alg} (\Q) \oplus K_n^\textnormal{alg} (\Q)$ for all $n \in \N \cup \{ 0 \}$.  Using the fact that $K_0^\textnormal{alg} (\Q) \cong \Z$ and $K_1^\textnormal{alg}(\Q) \cong \Q^\times$, we then have
\begin{align*}
K_0^\textnormal{alg} (L_\Q(E)) \cong \Z \oplus \Z, \quad 
K_1^\text{alg}(L_\Q(E)) \cong \Q^\times \oplus \Q^\times, 
\end{align*}  
 and $K_2^\textnormal{alg}(L_\Q(E)) \cong K_2^\textnormal{alg}(\Q) \oplus K_2^\textnormal{alg}(\Q)$.  Since  Lemma~\ref{K2-Q-torsion-lem}(3) shows that $K_2^\textnormal{alg}(\Q)$ is a torsion group, it follows that $K_2^\textnormal{alg}(L_\Q(E))$ is a torsion group.

Moving on to the graph $F$, we see the vertex matrix of $F$ is $A_F = \left( \begin{smallmatrix} 2 & 1 & 1 \\ 1 & 2 & 1\\ 1 & 1 & \infty \end{smallmatrix} \right)$, and $F$ has two regular vertices and one singular vertex.  Since the matrix $\left( \begin{smallmatrix} B^{t}_{F} - I  \\ C^t_F \end{smallmatrix} \right) =  \left( \begin{smallmatrix} 1 & 1 \\ 1 & 1 \\ 1 & 1 \end{smallmatrix} \right)$ has Smith normal form $\left( \begin{smallmatrix} 1 & 0 \\ 0 & 0 \\ 0 & 0 \end{smallmatrix} \right)$, we may use the formulae of Proposition~\ref{K-theory-comp-prop}(b) to calculate 
$$K_0^\textnormal{alg}(L_\Q(F)) \cong \coker \left( \left( \begin{smallmatrix} 1 & 0 \\ 0 & 0 \\ 0 & 0 \end{smallmatrix} \right) : \Z^2 \to \Z^3 \right) \cong \Z^2 = \Z \oplus \Z$$
and
\begin{align*}
K_1^\textnormal{alg}(L_\Q(F) )&\cong \ker \left( \left( \begin{smallmatrix} 1 & 0 \\ 0 & 0 \\ 0 & 0 \end{smallmatrix} \right) : \Z^2 \to \Z^3 \right) \oplus \coker  \left( \left( \begin{smallmatrix} 1 & 0 \\ 0 & 0 \\ 0 & 0 \end{smallmatrix} \right) : (\Q^\times)^2 \to (\Q^\times)^3 \right) \\
&\cong \Z \oplus \Q^\times \oplus \Q^\times \cong \Q^\times \oplus \Q^\times
\end{align*}
by Lemma~\ref{K2-Q-torsion-lem}(2).  Furthermore, the long exact sequence of Proposition~\ref{K-theory-comp-prop}(b) shows that
$$\xymatrix{ K_2^\textnormal{alg} (L_\Q(F)) \ar[r]^<>(.5)\phi & (\Q^\times)^2 \ar[r]^<>(0.45){ \left( \begin{smallmatrix} 1& 0 \\ 0 & 0 \\ 0 & 0 \end{smallmatrix} \right)} & (\Q^\times)^3 }
$$
is exact.  Since $\im \phi = \ker \left( \begin{smallmatrix} 1 & 0 \\ 0 & 0  \\ 0 & 0 \end{smallmatrix} \right) \cong \Q^\times  \cong \Z_{2} \oplus \Z \oplus \Z \oplus \ldots$, we see that $\im \phi$ contains non-torsion elements.  Thus $K_2^\textnormal{alg} (L_\Q(F))$ contains non-torsion elements, and $K_2^\textnormal{alg} (L_\Q(F))$ is not a torsion group.

We summarize the results of the above paragraphs here:  For the Leavitt path algebras $L_\Q(E)$ and $L_\Q(F)$, we have $K_n^\text{alg} (L_\Q(E)) \cong K_n^\text{alg} (L_\Q(F))$ for $n=0,1$, but $K_2^\text{alg} (L_\Q(E)) \ncong K_2^\text{alg} (L_\Q(F))$, because $K_2^\text{alg} (L_\Q(E))$ is a torsion group and $K_2^\text{alg} (L_\Q(F))$ is not.  In particular, the Leavitt path algebras $L_\Q(E)$ and $L_\Q(F)$ are not Morita equivalent.  
\end{example}

\begin{example} \label{K-theory-not-determined-ex}
Let $\widetilde{E}$ and $\widetilde{F}$ be the following graphs.
$$
\widetilde{E} \qquad \xymatrix{ \bullet  \ar@{=>}@(ul,ur)^\infty \\} \qquad \qquad \qquad \qquad \widetilde{F} \qquad \xymatrix{ \bullet \ar@(ur,ul) \ar@(dr,dl) \ar@/^/[r] & \bullet  \ar@/^/[l] \ar@(ul,ur) \ar@(dl,dr) }
$$

$ $

\noindent We shall consider the Leavitt path algebras $L_\Q(\widetilde{E})$ and $L_\Q(\widetilde{F})$ over the field $\Q$, and compute the $K_n^\textnormal{alg}$-groups of these algebras for $n=0,1,2$.  Note that each of $L_\Q(\widetilde{E})$ and $L_\Q(\widetilde{F})$ is purely infinite, simple, and unital.

Beginning with the graph $\widetilde{E}$, we see that $\widetilde{E}^0_\textnormal{reg} = \emptyset$ and $\widetilde{E}^0 = \widetilde{E}^0_\text{sing}$ has one element.  Thus $K_n^\textnormal{alg} (\Q)^{\widetilde{E}^0_\textnormal{reg}} = 0$, and the long exact sequence of Proposition~\ref{K-theory-comp-prop}(b) implies that $K_n^\textnormal{alg} (L_\Q(\widetilde{E})) \cong K_n^\textnormal{alg} (\Q)$ for all $n \in \N$.  Using the fact that $K_0^\textnormal{alg} (\Q) \cong \Z$ and $K_1^\textnormal{alg}(\Q) \cong \Q^\times$, we then have
\begin{align*}
K_0^\textnormal{alg} (L_\Q(\widetilde{E})) \cong \Z, \ K_1^\text{alg}(L_\Q(\widetilde{E})) \cong \Q^\times
\end{align*}  
and $K_2^\textnormal{alg}(L_\Q(\widetilde{E})) \cong K_2^\textnormal{alg}(\Q)$.  We see that $K_2^\textnormal{alg}(\Q)$ is a torsion group by Lemma~\ref{K2-Q-torsion-lem}(3), and thus $K_2^\textnormal{alg}(L_\Q(\widetilde{E}))$ is a torsion group.

Moving on to the graph $\widetilde{F}$, we see the vertex matrix of $\widetilde{F}$ is $A_{\widetilde{F}} = \left( \begin{smallmatrix} 2 & 1 \\ 1 & 2 \end{smallmatrix} \right)$, and every vertex of $\widetilde{F}$ is regular so that $\widetilde{F}^0_\textnormal{reg} = \widetilde{F}^0$.  Since $A_{\widetilde{F}}^t - I = \left( \begin{smallmatrix} 1 & 1 \\ 1 & 1 \end{smallmatrix} \right)$ has Smith normal form $\left( \begin{smallmatrix} 1 & 0 \\ 0 & 0 \end{smallmatrix} \right)$ we may use the formulae of Proposition~\ref{K-theory-comp-prop}(b) to calculate 
$$K_0^\textnormal{alg}(L_\Q(\widetilde{F})) \cong \coker \left( \left( \begin{smallmatrix} 1 & 0 \\ 0 & 0 \end{smallmatrix} \right) : \Z^2 \to \Z^2 \right) \cong \Z$$
and
\begin{align*}
K_1^\textnormal{alg}(L_\Q(\widetilde{F}) ) &\cong \ker \left( \left( \begin{smallmatrix} 1 & 0 \\ 0 & 0 \end{smallmatrix} \right) : \Z^2 \to \Z^2 \right) \oplus \coker  \left( \left( \begin{smallmatrix} 1 & 0 \\ 0 & 0 \end{smallmatrix} \right) : (\Q^\times)^2 \to (\Q^\times)^2 \right) \\
&\cong \Z \oplus \Q^\times \cong \Q^{\times}
\end{align*}
by Lemma~\ref{K2-Q-torsion-lem}(2).   Furthermore, the long exact sequence of Proposition~\ref{K-theory-comp-prop}(b) shows that
$$\xymatrix{ K_2^\textnormal{alg} (L_\Q(\widetilde{F}) ) \ar[r]^<>(.5)\phi & (\Q^\times)^2 \ar[r]^{ \left( \begin{smallmatrix} 1 & 0 \\ 0 & 0 \end{smallmatrix} \right)} & (\Q^\times)^2 }
$$
is exact.  Since $\im \phi = \ker \left( \begin{smallmatrix} 1 & 0 \\ 0 & 0 \end{smallmatrix} \right) \cong \Q^\times \cong \Z_{2} \oplus \Z \oplus \Z \oplus \ldots$, we see that $\im \phi$ contains non-torsion elements.  Thus $K_2^\textnormal{alg} (L_\Q(\widetilde{F}))$ contains non-torsion elements, and $K_2^\textnormal{alg} (L_\Q(\widetilde{F}))$ is not a torsion group.

We summarize the results of the above paragraphs here:  For the Leavitt path algebras $L_\Q(\widetilde{E})$ and $L_\Q(\widetilde{F})$, we have $K_n^\text{alg} (L_\Q(\widetilde{E})) \cong K_n^\text{alg} (L_\Q(\widetilde{F}))$ for $n=0,1$, but $K_2^\text{alg} (L_\Q(\widetilde{E})) \ncong K_2^\text{alg} (L_\Q(\widetilde{F}))$, because $K_2^\text{alg} (L_\Q(\widetilde{E}))$ is a torsion group and $K_2^\text{alg} (L_\Q(\widetilde{F}))$ is not.  In particular, the Leavitt path algebras $L_\Q(\widetilde{E})$ and $L_\Q(\widetilde{F})$ are not Morita equivalent.  
\end{example}

\begin{remark}
We mention that in both Example~\ref{K-theory-counter-ex} and Example~\ref{K-theory-not-determined-ex}, another way to see that $L_\Q(E)$ and $L_\Q(F)$ are not Morita equivalent is to note that $E$ and $F$ have different numbers of singular vertices, and thus Corollary~\ref{same-number-singular-cor} implies that $L_\Q(E)$ and $L_\Q(F)$ are not Morita equivalent.
\end{remark}

There are several consequences of Example~\ref{K-theory-counter-ex} and Example~\ref{K-theory-not-determined-ex}.  We discuss these in the next few remarks and pose a number of questions prompted by these examples.

\begin{remark}
The most striking consequence of Example~\ref{K-theory-counter-ex} is that it shows the Morita equivalence class of a unital Leavitt path algebra $L_K(E)$ coming from a graph $E$ with an infinite number of edges is not determined by the pair of its algebraic $K$-groups $(K_0^\textnormal{alg} (L_K(E)), K_1^\textnormal{alg} (L_K(E)))$.  This shows that the statement in Theorem~\ref{class-unital-inf-LPA-thm}(2) cannot be added to the equivalent statements in Theorem~\ref{field-does-not-matter-thm}. Consequently, Theorem~\ref{field-does-not-matter-thm} implies that over general fields we must use the pair $(K_0^\textnormal{alg} (L_K(E)), |E^0_\textnormal{sing}|)$, rather than $(K_0^\textnormal{alg} (L_K(E)), K_1^\textnormal{alg} (L_K(E)))$, to obtain a complete Morita equivalence invariant.  

This raises the question of how to extend these classification results to nonunital simple Leavitt path algebras.  Clearly, $(K_0^\textnormal{alg} (L_K(E)), |E^0_\textnormal{sing}|)$ will no longer be the correct invariant when $E$ has infinitely many vertices --- one can readily find examples of infinite graphs $E$ and $F$, such that for any field $K$ one has $K_0^\textnormal{alg} (L_K(E)) \cong K_0^\textnormal{alg} (L_K(F))$ and $|E^0_\textnormal{sing}| = |F^0_\textnormal{sing}| = \infty$, but $L_K(E)$ is not Morita equivalent to $L_K(F)$.  On the other hand, it is still quite possible that the pair of algebraic $K$-groups $(K_0^\textnormal{alg} (L_K(E)), K_1^\textnormal{alg} (L_K(E)))$ will work to classify nonunital simple Leavitt path algebras over fields with no free quotients.  Hence, there seems to be two na\"ive ways to approach the nonunital case:  The first approach is to look for a proper generalization of the invariant $(K_0^\textnormal{alg} (L_K(E)), |E^0_\textnormal{sing}|)$ in the nonunital case.  The second approach is to restrict attention to Leavitt path algebras over fields with no free quotients (ignoring fields such as $\Q$) and attempt to use algebraic $K$-theory as the invariant, in analogy with what has worked for $C^*$-algebras.  It is unclear to the authors how to pursue the first approach, or what the proper generalization of $(K_0^\textnormal{alg} (L_K(E)), |E^0_\textnormal{sing}|)$ should be.  In addition, in either approach, it seems that new techniques must be developed, since the tools used in the unital (i.e., finite number of vertices) case involve the graph moves (S), (O), (I), and (R), and seem to rely heavily on the fact there are finitely many vertices.  

Another avenue of inquiry when looking for a complete invariant for nonunital Leavitt path algebras over arbitrary fields is to return to the finite vertex situation and ask if in this case the collection of all the algebraic $K$-groups would be sufficient to determine the Morita equivalence class.  Example~\ref{K-theory-counter-ex} and Example~\ref{K-theory-not-determined-ex} show that the $K_0^\textnormal{alg}$-group with the $K_1^\textnormal{alg}$-group do not suffice when the field $K$ has free quotients.  However, in each of these examples, one has that the $K_2^\textnormal{alg}$-groups of each of the Leavitt path algebras are different.  Is it possible that if one includes the $K_2^\textnormal{alg}$-group one obtains a complete Morita equivalence invariant for all fields?  If not, will a finite number of the algebraic $K$-groups suffice?  If not, will all the algebraic $K$-groups suffice?  We summarize these inquiries in the following question.

\smallskip

\noindent \textbf{Question~1:} Does there exist $N \in \N \cup \{ \infty \}$ with the property that whenever $E$ and $F$ are simple graphs with a finite number of vertices and an infinite number of edges, and $K$ is any field, then $K_n^\textnormal{alg} (L_K(E)) \cong K_n^\textnormal{alg} (L_K(F))$ for all $0 \leq n  < N$ implies that $L_K(E)$ is Morita equivalent to $L_K(F)$?
If one considers Corollary~\ref{K0-K1-determine-Kn-cor}, one can think of Question 1 as asking whether Corollary~\ref{K0-K1-determine-Kn-cor}(ii) provides the proper invariant for general fields (Cf.~Remark~\ref{Kn-groups-unknown-rem}).

\smallskip

Before attempting to answer such a question, however, one may wish to ask how worthwhile an answer would be.  In particular, is the collection $\{ K_n^\textnormal{alg} (L_K(E)) : n \in \N \}$ a useful invariant?  Although we can calculate $K_n^\textnormal{alg} (L_K(E))$ for $n=0,1$ as described in Proposition~\ref{K-theory-comp-prop}(b), for $n \geq 2$ we have no explicit formula and the best tool at our disposal is the long exact sequence shown in Proposition~\ref{K-theory-comp-prop}(b).  Although this long exact sequence can sometimes be useful for extracting partial information about the higher algebraic $K$-groups, in general  there are many examples of Leavitt path algebras where there is little to nothing we can say about their higher algebraic $K$-groups.  This leads to our next question.

\smallskip

\noindent \textbf{Question~2:} If $E$ is a simple graph with a finite number of vertices and an infinite number of edges, and $K$ is any field, can one give a tractable method for calculating $K_n^\textnormal{alg} (L_K(E))$ for $n \geq 2$?

\smallskip

\noindent An answer to Question~2 would even be interesting if one adds the hypothesis that $K$ is a field with no free quotients.

\end{remark}

\begin{remark} \label{number-sing-vert-rem}
In each of Example~\ref{K-theory-counter-ex} and Example~\ref{K-theory-not-determined-ex} we have pairs of  graphs with isomorphic $K_0^\textnormal{alg}$-groups and isomorphic $K_1^\textnormal{alg}$-groups, but a different number of singular vertices.  This shows that $(3) \centernot \implies (2)$ in Corollary~\ref{K1-tell-number-singular-cor} when we remove the hypothesis that $K$ is a field with no free quotients.  In particular, when $K$ has free quotients, the groups $K_0^\textnormal{alg}(L_K(E))$ and $K_1^\textnormal{alg}(L_K(E))$ do not determine the number of singular vertices in the graph $E$.  As Example~\ref{K-theory-not-determined-ex} shows, when $K$ is a field with a free quotient, then the groups $K_0^\textnormal{alg}(L_K(E))$ and $K_1^\textnormal{alg}(L_K(E))$ are not even sufficient to determine whether $E$ has no singular vertices.  Hence, if $E$ has a finite number of vertices, the groups $K_0^\textnormal{alg}(L_K(E))$ and $K_1^\textnormal{alg}(L_K(E))$ cannot, in general, tell us whether or not $E$ is finite.  This leads us to the following question

\smallskip

\noindent \textbf{Question~3:} If $E$ and $F$ are graphs with a finite number of vertices, $K$ is any field, and $K_n^\textnormal{alg}(L_K(E)) \cong K_n^\textnormal{alg}(L_K(F))$ for all $n \in \N \cup \{ 0 \}$, then is it necessarily the case that $|E^0_\textnormal{sing}| = |F^0_\textnormal{sing}|$?

\smallskip

\noindent If such an $N$ is found, then Theorem~\ref{field-does-not-matter-thm} shows us that $\{ K_n^\textnormal{alg}(L_K(E)) \}_{n=1}^N$ would be a complete Morita equivalence invariant for unital simple Leavitt path algebras $L_K(E)$ when $E$ has an infinite number of edges.  This has two advantages over using the pair $(K_n^\textnormal{alg}(L_K(E)), |E^0_\textnormal{sing}|)$; first, it is an invariant expressed entirely in terms of the algebra $L_K(E)$ without reference to properties of the particular graph $E$; and second, it is an invariant that has a chance of classifying nonunital simple Leavitt path algebras.

\end{remark}


\begin{thebibliography}{99}


\bibitem{AAM}
G.~Abrams, P.~N.~\'Anh, and L.~M\'arki, \emph{A topological approach to Morita equivalence for rings with local units}, Rocky Mountain J.~Math. \textbf{22} (1992), 405--416.

\bibitem{AbrPino3}
G.~Abrams and G.~Aranda-Pino, \emph{The Leavitt path algebras of arbitrary graphs}, Houston J.~Math \textbf{34} (2008), 423--442.

\bibitem{AT1}
G.~Abrams and M.~Tomforde, \emph{Isomorphism and Morita equivalence of graph algebras}, Trans. Amer. Math. Soc. \textbf{363} (2011), 3733--3767.

\bibitem{ALPS}  
G. Abrams, A. Louly, E. Pardo, and C. Smith, \emph{Flow invariants in the classification of Leavitt path algebras}, J.~Algebra \textbf{333} (2011),  202--231.

\bibitem{ABC}
P.~Ara, M.~Brustenga, and G.~Corti\~nas, \emph{$K$-theory of Leavitt path algebras}, M\"unster J. Math. \textbf{2} (2009), 5--33. 

\bibitem{AbrGoodParPI}
P.~Ara, K.R.~Goodearl, and E.~Pardo, \emph{$K_{0}$ of purely infinite simple regular rings}, $K$-theory \textbf{26} (2002), 69--100.

\bibitem{AMMS}
G.~Aranda Pino, D.~Mart\'in Barquero, C.~Mart\'in Gonz\'alez, and M.~Siles Molina, \emph{Socle theory for
Leavitt path algebras of arbitrary graphs}, Rev. Mat. Iberoam. \textbf{26} (2010), 611--638.

\bibitem{BP}
T.~Bates and D.~Pask, \emph{Flow equivalence of graph algebras},
Ergodic Theory Dynam. Systems \textbf{24}  (2004), 367--382. 

\bibitem{CET}
T.~M.~Carlsen, S. Eilers, and M.~Tomforde, \emph{Index maps in the $K$-theory of graph algebras}, J. $K$-theory \textbf{9} (2012), 385--406.

\bibitem{CG}
T.~Crisp, and D.~Gow, \emph{Contractible subgraphs and Morita equivalence of graph $C^*$-algebras}, Proc. Amer. Math. Soc. \textbf{134} (2006), 2003--2013.

\bibitem{CK}
J.~Cuntz and W.~Krieger, \emph{A class of $C^*$-algebras and
topological Markov chains}, Invent. Math. \textbf{56} (1980),
251--268.

\bibitem{DT1}
D.~Drinen and M.~Tomforde, \emph{The $C^*$-algebras of
arbitrary graphs}, Rocky Mountain J.~Math. \textbf{35} (2005),
105--135.

\bibitem{DT2}
D.~Drinen and M.~Tomforde, \emph{Computing $K$-theory and
Ext for graph $C^*$-algebras}, Illinois J. Math.
\textbf{46} (2002), 81--91.

\bibitem{EKTW}
S.~Eilers, T.~Katsura, M.~Tomforde, and J.~West, \emph{The ranges of $K$-theoretic invariants for nonsimple graph algebras}, preprint (2012) \newblock arXiv:1202.1989.

\bibitem{Ell2}
G.~A.~Elliott, \emph{On the classification of inductive limits of
sequences of semisimple finite-dimensional algebras}, J.~Algebra
\textbf{38} (1976), 29--44.

\bibitem{Gra}
D.~Grayson, \emph{On the $K$-theory of fields}, Algebraic $K$-theory and algebraic number theory (Honolulu, HI, 1987), 31--55, Contemp. Math. \textbf{83}, Amer. Math. Soc., Providence, RI, 1989.

\bibitem{HLMRT}
D.~Hay, M.~Loving, M.~Montgomery, E.~Ruiz, and K.~Todd, \emph{Non-stable $K$-theory for Leavitt path algebras}, Rocky Mountain J.~Math., To appear, preprint (2012) \newblock arXiv:1211.1102. 

\bibitem{Hua1}
D.~Huang, \emph{Flow equivalence of reducible shifts of finite type}, Ergodic Theory Dynam. Systems \textbf{14} (1994), 695--720.

\bibitem{Hua2}
D.~Huang, \emph{Automorphisms of Bowen-Franks groups of shifts of finite type}, Ergodic Theory Dynam. Systems \textbf{21} (2001), 1113--1137.

\bibitem{Mil}
J.~Milnor, Introduction to algebraic $K$-theory. Annals of Mathematics Studies, No.~\textbf{72}. Princeton University Press, Princeton, N.J.; University of Tokyo Press, Tokyo, 1971. xiii+184 pp.

\bibitem{Ro}
M.~R\o rdam, \emph{Classification of Cuntz-Krieger algebras},
$K$-theory \textbf{9} (1995), 31--58. 

\bibitem{Sor}
A.~S{\o}rensen, \emph{Geometric classification of simple graph algebras},  Ergodic Theory Dynam. Systems, To appear, \newblock preprint (2012) \newblock arXiv:1111.1592.

\bibitem{Szy4}
W.~Szyma\'nski, \emph{Simplicity of Cuntz-Krieger algebras of
infinite matrices}, Pacific J. Math. \textbf{199} (2001),
249--256. 

\bibitem{Tom10}
M.~Tomforde, \emph{Uniqueness theorems and ideal structure for Leavitt path algebras}, J.~Algebra \textbf{318} (2007), 270--299.

\bibitem{TomCoeffRing}
M.~Tomforde, \emph{Leavitt path algebras with coefficients in a commutative ring}, J. Pure Appl. Algebra \textbf{215} (2011), 471--484.
\end{thebibliography}
\end{document}